\crefname{hypothesis}{Hypothesis}{Hypotheses}
\def\namedlabel#1#2{\begingroup
    #2%
    \def\@currentlabel{#2}%
    \phantomsection\label{#1}\endgroup
}
\newcommand{\e}{\mathrm{e}}
\def\quand{\quad \mbox{and} \quad}
\newcommand{\intdm}[3]{ \int_{#1} #2 \, \mathrm{d}#3}
\newcommand{\intdmt}[4]{\displaystyle \int_{#1}^{#2} #3 \, \mathrm{d}#4}
\newcommand{\iintdmt}[6]{\displaystyle \int_{#1}^{#2} \int_{#3} #4 \, \mathrm{d}#5 \, \mathrm{d}#6}
\newcommand{\Vint}[1]{\left\langle #1 \right\rangle}
\newcommand{\Vnorm}[1]{\left\Vert #1 \right\Vert}
\def\p{\partial}
\newcommand{\EquationReference}[2]{\mathrel{\overset{\makebox[0pt]{\mbox{\normalfont\tiny\sffamily #1}}}{#2}}}
\def\veps{\varepsilon}
\def\pv{\text{P.V.}}
\def\grad{\nabla}
\newcommand{\bfs}[1]{\boldsymbol{#1}}
\newcommand{\bdx}{\mathbf{x}}
\newcommand{\bdy}{{\bf y}}
\newcommand{\bfxi}{\boldsymbol{\xi}}
\title{A Potential Space Estimate for Solutions of Systems of Nonlocal Equations in Peridynamics\thanks{Submitted to the editors DATE.
\funding{This research is supported by the U.S. NSF grant DMS-1615726. }}}
\author{James Scott \thanks{University of Tennessee Knoxville, Knoxville, TN 
  (\email{jscott66@vols.utk.edu})}
\and Tadele Mengesha \thanks{University of Tennessee Knoxville, Knoxville, TN  
  (\email{mengesha@utk.edu})}}
\begin{document}

\maketitle

\begin{abstract}
 We show that weak solutions to the strongly-coupled system of nonlocal equations of linearized peridynamics belong to a potential space with higher integrability. Specifically, we show that a function measuring local fractional derivatives of weak solutions to a linear system belongs to $L^p$ for some $p > 2$ with no additional assumptions other than measurability and ellipticity of the coefficients. This is a nonlocal analogue of an inequality of Meyers for weak solutions to an elliptic system of equations. We also show that functions in $L^p$ whose Marcinkiewicz-type integrals are in $L^p$ in fact belong to the Bessel potential space $\mathcal{L}^{p}_s$. Thus the fractional analogue of higher integrability of the solution's gradient is  displayed explicitly. The distinction here is that the Marcinkiewicz-type integral exhibits the coupling from the nonlocal model and does not resemble other classes of potential-type integrals found in the literature.
\end{abstract}

\begin{keywords}
  Peridynamics, higher integrability, nonlocal coupled system, fractional Korn's inequality, potential spaces
\end{keywords}

\begin{AMS}
 74BXX, 45E10,  46E35, 35B65
\end{AMS}

\section{Introduction}

Nonlocal models are becoming commonplace across application areas. Typically these models involve averaged difference quotients instead of derivatives of quantities. As a result model equations are formulated using integral operators and integral equations, in contrast to classical ones that rely on differential operators and differential equations. This characteristic makes nonlocal models amenable to describe singular and discontinuous physical, social and biological phenomena, see \cite{tadmor,Gilboa-Osher,appl:04,rossi} for applications and analysis of nonlocal equations. Our interest centers on models in peridynamics, a nonlocal reformulation of the basic equations of motion in continuum mechanics, that have shown promising potential in modeling the spontaneous formation of discontinuities in solids.  The  present work studies qualitative properties of solutions to the equilibrium equation in the  linearized bond-based peridynamic model that first appeared in \cite{Silling2000}, with a generalization later appearing in \cite{Silling2007, Silling2010}. To describe the model, a material body occupying a region has undergone the deformation that maps a material point $\bdx$ to $\bdx + {\bf u}(\bdx)$ in a deformed domain. Clearly, the vector field ${\bf u}$ represents the displacement field. Treating the material body as a complex mass-spring system, in peridynamics it is postulated that material points $\bdy$ and $\bdx$ interact through a 
bond vector $ \bdy-\bdx$. Under the uniform small strain theory \cite{Silling2010}, the strain of the bond $\bdy - \bdx$ is given by the
nonlocal linearized strain $
 {\bu(\bx)-\bu(\by)\over |\bx-\by|} \cdot \frac{\bx-\by}{|\bx-\by|}. $ 
A portion of this strain contributes to the volume changing component of the deformation and the
remaining is the shape changing component.  According to the linearized bond-based peridynamic model  \cite{Silling2010},  the balance of forces is given by a system of nonlocal equations
\[
c_{\mathfrak{h}}\int_{B_{\mathfrak{h}}(\bdx)} { \rho(\bx, \by){(\bdx-\bdy) \over |\bdx-\bdy|}\otimes{(\bdx-\bdy)\over |\bdx-\bdy|}} \left({\bf u}(\bdx) - {\bf u}(\bdy)\right)\, \mathrm{d}\bdy = {\bf F}(\bf x) 
\]
 where ${\bf F}(\bf x)$ is a vector of applied forces, $\rho$ is a nonnegative measurable function that represents the strength of interactions between material points $\bdx$ and $\bdy$, and finally the positive number $\mathfrak{h}$, called the {horizon}, determines the extent of interaction. The positive number  $c_\mathfrak{h}$ is a normalizing constant chosen in such a way that for smooth deformations the nonlocal operator converges to a differential operator when $\mathfrak{h}\to 0$. The kernel $\rho(\bx,\by)$ contains properties of the modeled material and typically decreases when $|\bx-\by|$ gets larger. It may depend on material points $\bdx, \bdy$, their relative position $\bdy-\bdx$, or in the case of isotropic materials only on their relative distance $|\bdy-\bdx|$. For general $\rho$, the equation may model heterogeneous and anisotropic materials.  In the above system the left hand side represents  
  the linearized internal force density function due to the deformation $\bdx\mapsto\bdx + {\bf u}(\bdx)$ and  is  a weighted average of the nonlocal linearized strain function associated with the displacement ${\bf u}$. See the papers \cite{Mengesha-DuElasticity, Silling2010} for derivation. See also the papers \cite{Emmrich-Weckner2007,Du-Navier1,Du-ZhouM2AN,Du-Zhou2010} for some mathematical analysis of linearized models.   Under the {\em small strain} regime nonlocal nonlinear peridynamic evolution models have also been studied \cite{Lipton14, Lipton15}. Nonlocal functionals related to the peridynamic model are also studied in \cite{Foss-Radu-2016}. 
  
This work will focus on a system of equations that uses interaction kernels $\rho (\bdx, \bdy)$ which behave like $|\bdx- \bdy|^{-(d+2s)}$ for $|\bdx-\bdy|$ close to zero and infinity. To be precise, we study the system of nonlocal equations of the type formally given by
\begin{equation}\label{maineqn}
\mathbb{L}_{\mathfrak{h}}{\bf u}(\bx) := c_\mathfrak{h}\int_{B_{\mathfrak{h}}(\bdx)} {A(\bdx, \bdy)\over |\bdx-\bdy|^{d+2s}}{{(\bdx-\bdy) \over |\bdx-\bdy|}\otimes{(\bdx-\bdy)\over |\bdx-\bdy|}} \left({\bf u}(\bdx) - {\bf u}(\bdy)\right)\, \mathrm{d}\bdy = {\bf F}(\bf x) 
\end{equation}
where $ d\geq 2$, $0< s<1$ are fixed,  and $A(\bdx, \bdy)$ is a measurable function, which we refer to as the \textit{coefficient}, that is elliptic and symmetric in the sense  
\[\alpha_1\leq A(\bdx, \bdy)\leq \alpha_2,  \quad \quad A(\bdx, \bdy) = A(\bdy, \bdx) \quad \quad \text{ for all $\bdx, \bdy\in \mathbb{R}^{d}$.} \]   
  We will give later the precise notion the operator is defined. 
  
  The goal of this paper is twofold. The first is to establish the higher integrability of a measure of smoothness of weak solutions  to  the peridynamic system of nonlocal  equations given in \eqref{maineqn}.   The notion of weak solution will be be defined in the next section.  
Roughly speaking, we show that there exists an exponent  $p > 2$ such that for any weak solution ${\bf u}$ to  \eqref{maineqn} corresponding to  a rough data $\bF\in L^{r}(\mathbb{R}^{d};\mathbb{R}^{d})$, for $r < 2$ both $\bu$ and $\Upsilon^{s}(\bu) $ are in $L^{p}$ where the function $\Upsilon^{s}({\bf u})$ is a measure of local smoothness of $\bu$ given by 
\begin{equation} \label{DefGamma}
\Upsilon^{s} ({\bf u}) (\bdx) := \left(\intdm{\mathbb{R}^{d}}{ \frac{|{\bf u}(\bdx)-{\bf u}(\bdy)|^{2}}{|\bdx-\bdy|^{d+2s}}}{\bdy} \right)^{1\over 2}. 
\end{equation}
The higher integrability result holds under no additional assumption on the coefficient $A(\bx,\by)$ other than ellipticity and measurability.   
The second goal is to characterize the space of vector fields ${\bu}$ such that both $\bu$ and $\Upsilon^{s}(\bu) $ are in $L^{p}(\mathbb{R}^{d})$. In fact, the characterization we present here utilizes the smaller function $D^{s}(\bu)$ 
that  is given by 
{ \begin{equation}\label{NormDs}
D^{s}({\bf u}) (\bdx) = \left(\int_{\mathbb{R}^{d}}{ \frac{ \left| ({\bf u}(\bdx)-{\bf u}(\bdy))\cdot {(\bdx-\bdy) \over |\bdx-\bdy|} \right|^{2}}{|\bdx-\bdy|^{d+2s}}}{d\by}\right)^{1\over 2}\,.
\end{equation} }
The function $D^{s}({\bf u})$ is the natural measure of  the $s$-differentiability of ${\bf u}$ directly related to the peridynamic energy and it satisfies $ D^{s}({\bf u}) (\bdx) \leq \Upsilon^{s}(\bu)(\bdx)$ for all $\bdx\in \mathbb{R}^{d}$. We will establish that for $\textstyle p\in{\small\left( {2d\over d +2s}, \infty\right)}$, the space of functions in which both ${\bf u}$ and $D^{s}({\bf u})$ are in $L^{p}$ is the standard Bessel potential space $\mathcal{L}^{p}_{s}$, which will  be defined shortly, that is, 
$
\{{\bf u} \in L^{p}(\mathbb{R}^{d};\bbR^{d}): D^{s}(\bu)  \in L^{p}(\bbR^{d})\} = \mathcal{L}^{p}_{s}(\mathbb{R}^{d}). 
$

We should note that one may think of the system \eqref{maineqn} as a fractional analogue of the strongly coupled system of partial differential equations
\begin{equation}\label{local-eqn}
\text{{\bf div}}\, \mathfrak{C}(\bdx) \nabla {\bf u}(\bdx) = {\bf F}(\bdx) \end{equation}
where $\mathfrak{C}(\bdx) $ is a fourth-order tensor of bounded coefficients. Systems of differential equations of the above type are commonly used  in elasticity and  are not necessarily uniformly elliptic but rather satisfy the weaker Legendre-Hadamard ellipticity condition. For a class of coefficients this connection between the nonlocal system \eqref{maineqn} and the local system \eqref{local-eqn} is rigorously justified in \cite{Du-ZhouM2AN,Mengesha-Du-non, Mengesha-DuElasticity} in the event of vanishing nonlocality ($\mathfrak{h}\to 0$). As has already been shown in \cite[Theorem 3]{Mengesha-DuElasticity} via a simple calculation using Taylor expansion, for ${\bf u}\in C_c^{2}(\mathbb{R}^{d};\mathbb{R}^{d})$, fixed $s$, and $A(\bdx, \bdy) = a(\bdx)$, there is a coefficient $\mu(\bdx)$ which is a constant multiple of $a(\bdx)$ such that as $\mathfrak{h}\to 0$
\[
\mathbb{L}_{\mathfrak{h}}{\bf u} \to \mathbb{L}_{0}{\bf u}:=\text{div}(\mu(\bdx)\nabla {\bf u}) + 2\nabla(\mu(\bdx) \textrm{div}\nabla {\bf u})\,.
\]
In light of this connection, the higher integrability result for weak solutions to \eqref{maineqn} is a fractional analogue of Meyers inequality for systems of differential equations. Meyers inequality states that weak solutions of the strongly coupled systems \eqref{local-eqn} with measurable coefficients satisfying the Legendre-Hadamard ellipticity condition and corresponding to highly integrable data are in $W^{1,p}(\mathbb{R}^{d};\mathbb{R}^{d})$  for some $p >2$, see \cite{Meyers1,Meyers}.  The result we obtain is the fractional analogue of this inequality in the Bessel potential spaces.  

In what follows we assume that the horizon $\mathfrak{h}$ is given and fixed. We will not track the dependence of generic constants on the horizon.   In the next section, we introduce notations, give the definition of weak solutions and state the precise statement of the main results. We will also describe how we prove those results.  In Section \ref{Meyers-section} we prove the higher integrability result. We first establish invertibility over a range of Sobolev spaces for the operator $\bbI_d + \bbL$. For a given weak solution $\bu$ to the nonlocal elliptic system \eqref{maineqn}, we will use this invertibility to prove higher integrability of $D^{s}({\bu})$. In Section \ref{Characterization-Section} we prove  the characterization of the potential spaces.
\section{Statement of main results}\label{sec:mainresult}
To state the main results of the paper we first introduce the quadratic bilinear form associated with the operator $\mathbb{L}_{\mathfrak{h}}$ in \eqref{maineqn}:   
{\small \[
\mathcal{E}_{\mathfrak{h}}({\bf u}, {\bf v}) = {c_\mathfrak{h}\over 2}\intdm{\mathbb{R}^{d}}{\intdm{B_{\mathfrak{h}}(\bdx)}{{A(\bdx, \bdy)\over |\bdx-\bdy|^{d+2s}} ({\bf u}(\bdx)-{\bf u}(\bdy))\cdot {(\bdx-\bdy) \over |\bdx-\bdy|} ({\bf v}(\bdx)-{\bf v}(\bdy))\cdot {(\bdx-\bdy) \over |\bdx-\bdy|}}{\bdy}}{\bdx}\,.
\]}
We use the notation $\mathcal{E}$ to represent the bilinear form when both integrals are on $\bbR^{d}$, that is formally, the horizon ${\mathfrak{h}} = \infty$. In this case we take $c_\infty = 1$, and write the operator as $\bbL$. 
For ${\bf u}\in L^{2}(\mathbb{R}^{d};\mathbb{R}^{d})$, the vector valued map  $\mathbb{L}_{\mathfrak{h}}{\bf u}$ is a vector of distributions acting on test functions ${\bfs \phi}\in C^{\infty}_c(\mathbb{R}^{d};\mathbb{R}^{d})$   as
\[
\langle\mathbb{L}_{\mathfrak{h}}{\bf u}, {\bfs \phi}\rangle : = \mathcal{E}_{\mathfrak{h}}({\bf u}, {\bfs \phi}). 
\]
It is now clear that if ${\bf u}$ is in the energy space $\{{\bf u}\in L^{2}: \mathcal{E}_{\mathfrak{h}}({\bf u},{\bf u}) < \infty\}$, then $\mathbb{L}_{\mathfrak{h}}{\bf u}$ is in its dual space.   
We will shortly characterize that the energy $\mathcal{E}_{\mathfrak{h}}({\bf u},{\bf u})$ is finite if and only if ${\bf u}\in W^{s,2}(\mathbb{R}^{d};\mathbb{R}^{d})$. This assertion follows from the lower and upper bounds of the coefficient $A(\bdx, \bdy)$ and using the equivalence of spaces that is recently proved in \cite{Mengesha-Hardy}.  The notion of weak solution to \eqref{maineqn}  is standard and is given in terms of the bilinear form $\mathcal{E}$. 
\begin{definition}
Given $\mathbf{F}\in W^{-s,2}(\mathbb{R}^{d};\mathbb{R}^{d})$, we say that ${\bf u}\in W^{s,2}(\mathbb{R}^{d};\mathbb{R}^{d})$ is a \textit{weak solution to} \eqref{maineqn} if 
\begin{equation}\label{defnweaksoln}
\mathcal{E}_{\mathfrak{h}}({\bf u},{\bfs \phi})= \langle{\bf F}, {\bfs \phi}\rangle
\end{equation}
for any ${\bfs \phi}\in W^{s,2}(\mathbb{R}^{d};\mathbb{R}^{d})$. In the above $\langle\cdot, \cdot\rangle$ denotes the duality pairing. 
\end{definition}
Existence and uniqueness of weak solutions to this particular system will be demonstrated shortly, see also 
the recent work in \cite{Kassmann-Mengesha-Scott} when complementary boundary conditions is imposed on the solution. However, the focus of this paper will be the issue of regularity of solutions. We seek to address the following question: If the data is regular, how regular is the solution? In particular,  we are interested in data ${\bf F}$ coming from a class of low integrability. We use the following standard notations for the fractional Sobolev exponents and its H\"older conjugate (denoted by the prime notation)  
\[
2^{\ast_{s}} = {2d\over d-2s}\,, \quand \left(2^{\ast_{s}}\right)' := 2_{\ast_{s}} = {2d \over d+2s}\,.   
\] 

Our result on the Sobolev regularity of solutions to the system \eqref{maineqn} states that any weak solution ${\bf u}$ to the nonlocal system with Poisson data $\bF$ in the described class in fact belongs to the Bessel potential space $\mathcal{L}^{p}_{s}(\mathbb{R}^{d})$ for some $p > 2$.  The space  $\mathcal{L}^{p}_{s}(\mathbb{R}^{d})$ will be defined shortly but we note that  when $p > 2$, the Bessel potential space is finer than the Besov space, i.e.\  $\mathcal{L}^{p}_{s}(\bbR^{d}) \subset W^{s,p}(\mathbb{R}^{d};\mathbb{R}^{d})$.   For precision, we state the result in the following theorem. 
\begin{theorem}\label{Meyers-type}
Suppose that $0<\alpha_1\leq \alpha_2<\infty$, $0<s<1$, $d\geq 2$, $\delta_0 > 0$ and $0 < \mathfrak{h} \leq \infty$ are all given. Let $A(\bdx, \bdy)$ be symmetric and  measurable with $\alpha_1\leq A(\bdx, \bdy)\leq \alpha_2$ and  ${\bf u}\in W^{s,2}(\mathbb{R}^{d};\mathbb{R}^{d})$ be any weak solution in the sense of  \eqref{defnweaksoln} corresponding to a given ${\bf F} \in L^{2_{*_{s}}  + \delta_0}(\mathbb{R}^{d}; \mathbb{R}^{d}) \cap L^{2_{*_s}}(\mathbb{R}^{d}; \bbR^d)$. Then there exists $\veps > 0$ depending only on $\alpha_1$, $\alpha_2$, $d$ and $s$ with the following property: If $p \in [2,\infty)$ satisfies
\begin{equation}\label{ExponentConstraints}
p \in [2,2^{*_s})\,, \qquad p-2 < \veps\,, \qquad \frac{dp}{(2 - \frac{p}{2})d +sp} - \frac{2d}{d+2s} < \delta_0\,,
\end{equation}
then $\bu \in \cL^s_p(\bbR^d;\bbR^d)$. Moreover, there exists a constant $C > 0$ such that
\begin{equation}
\Vnorm{\bu}_{\cL^s_p(\bbR^d)} \leq C \Big( \Vnorm{\bF}_{L^{2_{*_s}+\delta_0}(\bbR^d)} + \Vnorm{\bF}_{L^{2_{*_s}}(\bbR^d)} + \Vnorm{\bu}_{W^{s,2}(\bbR^d)} \Big)\,.
\end{equation}
The constant $C$ depends only on $\alpha_1$, $\alpha_2$, $d$, $p$, $\delta_0$ and $\mathfrak{h}$.
\end{theorem}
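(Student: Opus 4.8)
The plan is to reduce the higher-integrability claim to a perturbative invertibility statement for the shifted operator $\bbI_d + \bbL$ on a scale of Bessel potential spaces, in the spirit of the classical Meyers argument. First I would set up the functional-analytic framework: using the equivalence of spaces from \cite{Mengesha-Hardy}, identify the energy space $\{\bu \in L^2 : \mathcal{E}_{\mathfrak{h}}(\bu,\bu) < \infty\}$ with $W^{s,2}(\bbR^d;\bbR^d)$, so that $\bbL_{\mathfrak{h}}$ (and $\bbL$) is a bounded operator $W^{s,2} \to W^{-s,2}$, and by ellipticity and a fractional Korn-type inequality (the role of $D^s(\bu)$) the operator $\bbI_d + \bbL$ is coercive, hence an isomorphism $W^{s,2} \to W^{-s,2}$ by Lax--Milgram. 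The key structural point is that $\bbI_d + \bbL$ also maps $\cL^s_2 = W^{s,2}$ boundedly, and one wants to upgrade this isomorphism to $\cL^s_p \to (\cL^s_{p'})^* = \cL^{-s}_{p'}$ for $p$ in a small neighborhood of $2$; equivalently, to show that the solution operator is bounded on the relevant potential scale near $p=2$.

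Next I would carry out the interpolation/perturbation step. One shows that $\bbI_d + \bbL$ is bounded $\cL^s_p \to \cL^{-s}_p$ for a range of $p$ around $2$ — this uses that the nonlocal operator has a kernel comparable to $|\bx-\by|^{-(d+2s)}$ times the bounded tensor $A(\bx,\by)\,\tfrac{(\bx-\by)}{|\bx-\by|}\otimes\tfrac{(\bx-\by)}{|\bx-\by|}$, so it behaves like a (matrix-valued) fractional Laplacian of order $2s$ up to a zero-order Calderón--Zygmund-type perturbation, and standard mapping properties of such operators on Bessel potential spaces apply. Then by the Šneĭberg stability theorem (or a direct Neumann-series argument using that the constant-coefficient model operator is an exact isomorphism and the variable-coefficient part is a small off-diagonal perturbation in a suitable sense), invertibility at $p=2$ propagates to an open interval of exponents $p \in [2, 2+\veps)$, with $\veps$ depending only on $\alpha_1,\alpha_2,d,s$. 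Given $\bF \in L^{2_{*_s}+\delta_0} \cap L^{2_{*_s}}$, fractional Sobolev embedding places $\bF$ in $\cL^{-s}_p$ for the $p$ allowed by the third constraint in \eqref{ExponentConstraints} (that inequality is exactly the bookkeeping that $L^q \hookrightarrow \cL^{-s}_p$ holds for the dual Sobolev exponent $q$ matching $p$ and $s$), and then $\bu = (\bbI_d+\bbL)^{-1}(\bF + \bu) \in \cL^s_p$, which also yields $D^s(\bu), \Upsilon^s(\bu) \in L^p$ and the stated norm bound. The $L^p$ bound on $\bu$ itself then follows from $\cL^s_p \subset W^{s,p} \subset L^p$ (recalling $p < 2^{*_s}$ keeps the embedding subcritical, and the constraint $p<2^{*_s}$ is the first item in \eqref{ExponentConstraints}).

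The main obstacle I expect is the perturbation/stability step at the heart of the argument — specifically, establishing the $\cL^s_p \to \cL^{-s}_p$ boundedness of $\bbL$ (or $\bbL_{\mathfrak{h}}$) uniformly for $p$ near $2$ and controlling the variable-coefficient error so that Šneĭberg's theorem applies with a quantitative $\veps$. Two features make this delicate: (i) the operator is a strongly coupled \emph{system} whose symbol is only Legendre--Hadamard-type elliptic after the $\bbI_d$ shift, so one cannot naively invoke scalar fractional-Laplacian theory and must instead exploit the pointwise lower bound via the function $D^s$ and a vectorial fractional Korn inequality; and (ii) the horizon cutoff $B_{\mathfrak{h}}(\bx)$ destroys exact scaling and homogeneity, so the ``constant-coefficient model'' comparison has to be done carefully — likely by writing $\bbL_{\mathfrak{h}} = \bbL - (\bbL - \bbL_{\mathfrak{h}})$ and checking the tail operator $\bbL - \bbL_{\mathfrak{h}}$ (integration over $|\bx-\by|>\mathfrak{h}$) is a benign, smoothing zero-order perturbation on every $\cL^s_p$. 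A secondary technical point is tracking that the threshold $\veps$ and the exponent constraints \eqref{ExponentConstraints} are mutually consistent, i.e.\ that the interval of good $p$ produced by Šneĭberg genuinely contains exponents satisfying all three conditions once $\delta_0 > 0$ is fixed; this is essentially algebra once the qualitative invertibility is in hand.
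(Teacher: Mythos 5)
Your overall strategy (coercivity via fractional Korn, Lax--Milgram at $p=2$, Schneiberg/\v{S}ne\u{\i}berg perturbation, fractional Sobolev embedding to place $\bF$ in the dual space, and splitting off the tail operator $\bbL - \bbL_{\mathfrak{h}}$ as a benign zero-order perturbation) matches the paper's, and you correctly flag the boundedness/perturbation step as the crux. However, there is a genuine gap in how you set up the interpolation scale. You propose to run Schneiberg on the Bessel potential scale with \emph{fixed smoothness} $s$, i.e.\ to show $\varpi\bbI_d + \bbL : \cL^s_p \to (\cL^s_{p'})^*$ is bounded for $p$ near $2$ and then invert. For merely measurable $A(\bx,\by)$ the only available boundedness estimate on the bilinear form $\mathcal{E}(\bu,\bv)$ is H\"older's inequality applied to the Gagliardo-type double integral, which produces a bound of the form $\alpha_2[\bu]_{W^{s,p}}[\bv]_{W^{s,p'}}$. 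That is not controlled by $\|\bu\|_{\cL^s_p}\|\bv\|_{\cL^s_{p'}}$, because the inclusion $\cL^s_q \subset W^{s,q}$ only holds for $q\geq 2$; for the dual index $p'<2$ the inclusion runs the wrong way and $[\bv]_{W^{s,p'}}$ cannot be dominated by $\|\bv\|_{\cL^s_{p'}}$. Your suggested workaround---that $\bbL$ is a matrix fractional Laplacian plus a zero-order Calder\'on--Zygmund perturbation, so ``standard mapping properties'' on $\cL^s_p$ apply---is precisely what fails here: with only measurability on $A$, the operator is not a CZ perturbation of a constant-coefficient model, and no such mapping theorem is available. (This is the same obstruction that makes the classical Meyers argument perturbative in the first place.)

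The paper sidesteps this by working on the two-parameter scale $W^{t,p}$ and moving \emph{both} the smoothness $t$ and the integrability $p$, keeping $t+t'=2s$ and $1/p+1/p'=1$. In that scale the H\"older bound $|\mathcal{E}(\bu,\bv)|\leq\alpha_2[\bu]_{W^{t,p}}[\bv]_{W^{t',p'}}$ holds exactly because the exponents in the kernel $|\bx-\by|^{-(d+2s)}$ split as $(d/p+t)+(d/p'+t')$, so Schneiberg applies cleanly. One then obtains $\bu\in W^{t,p}$ with $t>s$ (a genuine gain in both differentiability and integrability), deduces $\Upsilon^s(\bu)\in L^p$, and only at the end passes to $\cL^s_p$ via the characterization theorem \autoref{Character-Potential}. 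So the conclusion $\bu\in\cL^s_p$ is a corollary of the characterization, not the object directly produced by the functional-analytic machinery; your plan inverts this order and thereby demands a mapping property that the hypotheses do not support. (Minor additional slips: $(\cL^s_{p'})^* = \cL^{-s}_p$, not $\cL^{-s}_{p'}$; and for the finite-horizon case the decomposition is $\bbL_{\mathfrak{h}} = c_{\mathfrak{h}}\bbL + \cP_{\mathfrak{h}}$ with the scaling constant, not a bare difference.)
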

The main implication  of the regularity result is that, with no additional smoothness condition on the coefficient $A(\bdx, \bdy)$, a  weak solution satisfying  \eqref{defnweaksoln} has a higher integrable fractional ``s-derivative".   For scalar equations, this type of nonlocal analogue of Meyers inequality was obtained 
 in the recent work \cite{Bass-Ren}\footnote{The argument presented in the paper appears to have a gap that we are unable to fix as of the writing of this manuscript. }.  It has been shown in several subsequent works that solutions to fractional equations surprisingly also improve in \textit{differentiability}, again with no additional smoothness conditions on the coefficients. Some of these approaches use reverse H\"older inequalities and nonlocal Gehring-type lemmas as in \cite{Kuusi-M-S}, or prove the result via a commutator estimate as in \cite{Schikorra} and  in  \cite{Auscher} using a functional analytic approach.    The approaches in \cite{Kuusi-M-S, Schikorra} are local in nature and  application of appropriate embedding estimates in their work show improved local differentiability will lead to improved local integrability. The approach in \cite{Auscher} is rather robust and implies local and global regularity results.     We should mention that the results in \cite{Kuusi-M-S, Schikorra} hold for vectorial systems of fractional equations,   even for coupled systems of fractional equations such as the vectorial fractional $p$-Laplacian, which are uniformly elliptic in their natural energy space.  However, none of these works apply directly to the system of coupled nonlocal equations under consideration in this paper which has a weaker ellipticity property in the spirit of Legendre-Hadamard.  The proof of Theorem \ref{Meyers-type} we present in this paper follows the argument presented in  \cite{Auscher} for nonlocal scalar equations extending its applicability to the system of coupled nonlocal equations and for rough data.  

We also remark that for $\delta_0$ small, the exponent $2_{\ast_{s}} +\delta_{0}$ can be smaller than $2$, and the higher potential space estimate holds true for rough data $\bF \in L^{r } (\bbR^d, \bbR^d)$ and $r < 2$. We note that there are vector functions ${\bf F} $ that are in $L^{2_{*_{s}}  + \delta_0}(\mathbb{R}^{d}; \mathbb{R}^{d}) \cap L^{2_{*_s}}(\mathbb{R}^{d}; \bbR^d)$ but not in $L^{2}(\mathbb{R}^{d};\mathbb{R}^{d})$. Indeed, for $\delta_0$ sufficiently small and $\textstyle \tau = \frac{d \delta_0}{2_{*_s}(2_{*_s}+\delta_0)}$ it is straightforward to verify via Sobolev embedding that vector fields in $W^{\tau, 2_{*_s}}(\bbR^d;\bbR^d)$ will belong to the former class but not necessarily the latter. While the higher integrability result remains true  in the event $\bF\in L^{2} (\bbR^d, \bbR^d)$,  this case must be treated differently. In fact for nonlocal scalar equations an argument is presented in \cite[Corollary 4.9]{Auscher} that is also applicable to our case.

The function $D^{s}({\bf u}) $ is related to the energy space $W^{s,2}(\mathbb{R}^{d};\mathbb{R}^{d})$. A vector field ${\bf u}\in L^{2}(\mathbb{R}^{d}; \mathbb{R}^{d})$ will belong to $W^{s,2}(\mathbb{R}^{d};\mathbb{R}^{d})$ if and only if $D^{s}({\bf u})\in L^{2}(\mathbb{R}^{d})$. This can easily be seen using the fractional Korn-type inequality proved in \cite[Theorem 1.1]{Scott-Mengesha} that states the space 
\begin{equation}\label{Space-X}
 \mathcal{X}_{q}^{s}(\bbR^{d}) = \left\{ {\bf v}\in L^{q}(\bbR^{d}; \mathbb{R}^{d})\, : \, \intdm{\bbR^{d}}{ \intdm{\bbR^{d}}{ \frac{ \left| ({\bf v}(\bdx) - {\bf v} ({\bdy})) \cdot \frac{(\bdx -\bdy)}{|\bdx - \bdy|}  \right|^q}{|\bdx-\bdy|^{d+sq}}}{\bdy}}{\bdx} < \infty \right\}
 \end{equation}
is equivalent with the standard fractional Sobolev space $W^{s,q}(\mathbb{R}^{d};\mathbb{R}^{d})$, for any $s\in (0, 1)$ and $q\in (1, \infty)$; see also \cite{Mengesha-Hardy}.

 The higher integrability of $D^{s}({\bf u})$ will be shown to be equivalent with the statement that ${\bf u}$ is in a higher-integrable potential space. Establishing this equivalence is the second goal of the paper. To this end, we first introduce the space of Bessel potentials, $\cL^{q}_{s}$. For $0 < s < 1$, $1 < q < \infty$ 
\begin{equation}
\cL^{q}_{s}(\bbR^d) := \left\lbrace \bff \in \mathcal{S}'(\bbR^d;\mathbb{R}^{d}) \, : \, \left(\left(1 +4\pi^2 |\bfxi|^2\right)^{s/2} \widehat{\bff}\right)^\vee\in L^q(\bbR^d;\mathbb{R}^{d}) \right\rbrace\,,
\end{equation}
where $\mathcal{S}'(\bbR^d;\mathbb{R}^{d})$ is the space of vector-valued tempered  distributions, and the Fourier transform $\widehat{\bh}$ is defined for smooth vector fields $\bh$ as 
\[
\widehat{\bh}(\bfxi) = \intdm{\mathbb{R}^{d}}{ e^{-\imath2\pi\bfxi\cdot\bdx} \bh(\bdx)}{\bdx}\,.
\]
The notation $\cdot ^{\vee}$ denotes the inverse Fourier transform. We will retain the same Fourier transform notations even  for distributions.  
The norm in $\cL^{q}_{s}(\bbR^d)$ is given by 
\begin{equation}
\Vnorm{\bff}_{\cL^{q}_{s}(\bbR^d)} := \left\|\left(\left(1 +4\pi^2 |\bfxi|^2\right)^{s/2} \widehat{\bff}\right)^\vee\right\|_{ L^q(\bbR^d) }. 
\end{equation}

Our second main result is the characterization of functions in the potential space $\cL^{q}_{s}$ in terms of the integrability of the smaller function $D^{s}(\bff )$. To that end, let us introduce the function space $\mathcal{D}^{p}_{s}(\mathbb{R}^{d})$ which is the closure of the Schwarz space $\mathcal{S}(\bbR^d;\bbR^d)$ in the norm $\|{\bf u}\|_{L^{p}(\bbR^d)} + \|D^{s}({\bf u})\|_{L^{p}(\bbR^d)}$.  
\begin{theorem}\label{Character-Potential}
Let $2_{*_{s}}\leq p < \infty$ and $0 < s < 1$. Then  $\bff \in \cL^{p}_{s}(\bbR^d)$ if and only if $\bff  \in \mathcal{D}^{p}_{s}(\mathbb{R}^{d})$. Moreover, there exist positive constants $C_1$ and $C_2$ such that for any ${\bf f}\in L^{p}(\mathbb{R}^{d};\mathbb{R}^{d})$, 
\[
C_1 \| \bff\|_{\cL^{p}_{s}(\bbR^d)} \leq \|{\bf f}\|_{L^{p}(\bbR^d)} + \|D^{s}({\bf u}) \|_{L^{p}(\bbR^d)} \leq C_2 \| \bff\|_{\cL^{p}_{s}(\bbR^d)}\,.
\]
\end{theorem}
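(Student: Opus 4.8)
The plan is to establish the two inclusions of the set-theoretic identity $\cL^p_s(\bbR^d) = \mathcal{D}^p_s(\bbR^d)$ through the norm equivalence, working first on the Schwartz class and then passing to the closure. The easy direction is $\cL^p_s \hookrightarrow \mathcal{D}^p_s$, i.e.\ the right-hand inequality $\|\bff\|_{L^p} + \|D^s(\bff)\|_{L^p} \leq C_2 \|\bff\|_{\cL^p_s}$. For $\bff \in \mathcal{S}(\bbR^d;\bbR^d)$, write $\bff = (G_s * \bg)$ where $G_s$ is the Bessel kernel of order $s$ and $\bg = \big((1+4\pi^2|\bfxi|^2)^{s/2}\widehat{\bff}\big)^\vee$, so that $\|\bg\|_{L^p} = \|\bff\|_{\cL^p_s}$. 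The bound on $\|\bff\|_{L^p}$ is immediate from $\|G_s\|_{L^1} = 1$ and Young's inequality. For $\|D^s(\bff)\|_{L^p}$, one has the pointwise identity
\[
D^s(\bff)(\bdx)^2 = \intdm{\bbR^d}{\frac{\big|(\bff(\bdx)-\bff(\bdy))\cdot\frac{\bdx-\bdy}{|\bdx-\bdy|}\big|^2}{|\bdx-\bdy|^{d+2s}}}{\bdy},
\]
and I would represent the scalar quantity $(\bff(\bdx)-\bff(\bdy))\cdot\frac{\bdx-\bdy}{|\bdx-\bdy|}$ as a convolution in $\bg$ against a kernel obtained by differencing $G_s$; the upshot is a vector-valued (Hilbert-space-valued, with target $L^2(\bbR^d,\tfrac{d\bdy}{|\bdx-\bdy|^{d+2s}})$) singular integral operator $\bg \mapsto D^s(G_s*\bg)$ whose kernel satisfies the standard size and Hörmander regularity conditions, hence is bounded on $L^p$ for $1<p<\infty$ by vector-valued Calderón–Zygmund theory. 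A cleaner alternative — and the one I would actually push — is to observe that $D^s(\bff) \leq \Upsilon^s(\bff)$ pointwise, and $\Upsilon^s(\bff)$ is comparable in $L^p$ to the Gagliardo-type square function whose $L^p$-boundedness against $\bg = (I-\Delta)^{s/2}\bff$ is exactly the statement that $\cL^p_s \subset W^{s,p}$ (already cited in the excerpt for $p>2$, and classical for all $1<p<\infty$); this gives $\|D^s(\bff)\|_{L^p} \leq \|\Upsilon^s(\bff)\|_{L^p} \leq C\|\bff\|_{W^{s,p}} \leq C\|\bff\|_{\cL^p_s}$.

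The harder direction is $\mathcal{D}^p_s \hookrightarrow \cL^p_s$, i.e.\ the left-hand inequality $C_1\|\bff\|_{\cL^p_s} \leq \|\bff\|_{L^p} + \|D^s(\bff)\|_{L^p}$, and this is where the constraint $p \geq 2_{*_s} = \tfrac{2d}{d+2s}$ enters. It suffices to prove this for $\bff \in \mathcal{S}(\bbR^d;\bbR^d)$. The key algebraic fact is that $D^s$, unlike $\Upsilon^s$, only sees the \emph{longitudinal} part of the difference quotient, so one must recover the full Bessel potential norm from directional information — this is precisely the role of the fractional Korn inequality of \cite{Scott-Mengesha}, \cite[Theorem 1.1]{Scott-Mengesha}, which identifies $\mathcal{X}^s_q(\bbR^d)$ with $W^{s,q}(\bbR^d;\bbR^d)$ with equivalent norms for $1<q<\infty$. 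Applying it, $\|\bff\|_{L^p} + \|D^s(\bff)\|_{L^p}$ is equivalent to $\|\bff\|_{W^{s,p}(\bbR^d;\bbR^d)}$. So the problem reduces to the purely scalar (well, componentwise) embedding $W^{s,p}(\bbR^d) \hookrightarrow$... wait — that goes the wrong way, since $\cL^p_s \subsetneq W^{s,p}$ for $p>2$. The correct route is: the Korn inequality gives control of the \emph{Besov/Gagliardo} seminorm, but we need the \emph{Bessel} norm, which is genuinely stronger. This means the Korn reduction alone does not close the argument for $p > 2$, and the real work is elsewhere.

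Let me restart that last paragraph. For the forward embedding I would instead argue directly at the level of the operator $R = (I-\Delta)^{s/2}$ composed with a suitable nonlocal "inverse'' built from $D^s$. Concretely: let $\bg \in \mathcal{S}$, set $\bff = (I-\Delta)^{-s/2}\bg \in \cL^p_s$, and one wants $\|\bg\|_{L^p} \leq C(\|\bff\|_{L^p} + \|D^s(\bff)\|_{L^p})$. By duality it is equivalent to produce, for each test field $\bfs\phi$, a bound $|\langle \bg, \bfs\phi\rangle| \lesssim (\|\bff\|_{L^p}+\|D^s(\bff)\|_{L^p})\,\|\bfs\phi\|_{L^{p'}}$, and here $p \geq 2_{*_s}$ forces $p' \leq 2^{*_s} = \tfrac{2d}{d-2s}$, which is the fractional Sobolev conjugate — so $W^{s,2}$-type test fields embed into $L^{p'}$ and one can integrate by parts nonlocally against the bilinear form underlying $D^s$. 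The technical heart is a representation of the Bessel multiplier $(1+4\pi^2|\bfxi|^2)^{s/2}$, or rather its "gradient part'', through the symbol $\int_{\bbR^d} \frac{(1-\cos(2\pi\bfxi\cdot\bdz))}{|\bdz|^{d+2s}}\,\frac{\bdz\otimes\bdz}{|\bdz|^2}\,d\bdz$ which is (a dimensional constant times) $|\bfxi|^{2s}$ times a fixed positive-definite matrix — this is the Fourier side of the $D^s$ quadratic form and is the nonlocal analogue of the fact that the Korn form controls the full gradient. Combining this multiplier identity with the Mikhlin–Hörmander theorem (to pass from $|\bfxi|^{2s}$ back to $(1+|\bfxi|^2)^{s}$ up to lower-order terms absorbed by $\|\bff\|_{L^p}$) and the $L^p$-boundedness of the relevant Riesz-type transforms recovers $\|\bg\|_{L^p}$. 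I expect the main obstacle to be exactly the interplay between the \emph{local} (Gagliardo, Besov) nature of $D^s$ as an integral and the \emph{nonlocal-multiplier} (Bessel) target, together with tracking that the exponent restriction $p\geq 2_{*_s}$ is the precise threshold at which the dual test functions sit in the energy space where the peridynamic bilinear form and the Korn inequality of \cite{Scott-Mengesha} are available; the case $p<2_{*_s}$ would require the full force of the fractional Korn inequality in a range where it may fail or degenerate, which is why it is excluded.
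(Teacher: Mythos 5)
Your treatment of the right-hand (easy) inequality is fine: after a detour through Calder\'on--Zygmund theory you land on the correct observation that $D^s(\bff)\le \Upsilon^s(\bff)$ pointwise, which reduces matters to Stein's characterization of $\cL^p_s$ via the Marcinkiewicz integral. That is precisely what the paper does. You also correctly catch the trap: the fractional Korn inequality reduces $\|\bff\|_{L^p}+\|D^s\bff\|_{L^p}$ to $\|\bff\|_{W^{s,p}}$, which is \emph{weaker} than $\|\bff\|_{\cL^p_s}$ for $p>2$, so Korn alone cannot close the left-hand inequality. Good.

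But the restart does not close the gap, and there are two concrete problems. First, you misattribute the role of the threshold: you write that the restriction $p\ge 2_{*_s}$ ``is where the constraint enters'' in the hard direction, but in fact the paper proves the left-hand inequality for \emph{all} $p\in(1,\infty)$; the restriction $p>2_{*_s}$ is needed only for the \emph{easy} direction, because that is the exponent range in Stein's Marcinkiewicz-integral characterization of $\cL^p_s$. So your duality heuristic --- ``$p\ge 2_{*_s}$ forces $p'\le 2^{*_s}$ so the test fields sit in the energy space'' --- is not the operative mechanism.

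Second, and more fundamentally, the multiplier/Mikhlin route you sketch does not engage with the actual structure of $D^s$. You speak of ``the Fourier side of the $D^s$ quadratic form,'' but $D^s(\bff)(\bdx)$ is a \emph{pointwise square function}, not a bilinear form or a multiplier operator; its $L^p$ norm for $p\ne 2$ is not accessible through a Plancherel-type identity, and there is no way to ``pass from $|\bfxi|^{2s}$ back to $(1+|\bfxi|^2)^s$ up to lower-order terms'' and extract $\|\bg\|_{L^p}$ merely from $\|D^s(\bff)\|_{L^p}$. The paper's actual mechanism is entirely different and is the genuinely nontrivial content: one constructs a modified $(d+1)\times(d+1)$ Poisson-type kernel $\bbP_t$ whose action on a difference $\bff(\bdx+\bdy)-\bff(\bdx)$ automatically produces the \emph{projected} quantity $(\bff(\bdx+\bdy)-\bff(\bdx))\cdot \bdy/|\bdy|$; one then builds an associated Littlewood--Paley $g$-function $\mathring{\mathfrak{g}}_1$, proves the two-sided $L^p$ estimate $\|\bff\|_{L^p}\simeq\|\mathring{\mathfrak{g}}_1(\bff)\|_{L^p}$ (using Hilbert-space-valued singular integrals for one side, and a polarization/duality argument combined with a Riesz-transform Korn-type lemma for the converse), and finally establishes the \emph{pointwise} comparison $\mathring{\mathfrak{g}}_1(\bff)(\bdx)\le C\, D^s(\cI_s\bff)(\bdx)$ via the fractional-integral identity $\p_t\bU(\bdx,t)=\tfrac{-1}{\Gamma(1-s)}\int_0^\infty\p_{tt}\bU_s(\bdx,t+r)\,r^{-s}\,\mathrm{d}r$. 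That pointwise comparison between the $g$-function and $D^s$ of a Riesz potential is the key step your sketch does not produce and, as far as I can see, cannot be replaced by multiplier considerations alone. Combined with Stein's decomposition $(1+4\pi^2|\bfxi|^2)^{s/2}=\widehat{\nu}_s+(2\pi|\bfxi|)^s\widehat{\lambda}_s$ with $\nu_s,\lambda_s$ finite measures, this is what yields $\|\bff\|_{\cL^p_s}\lesssim\|\bff\|_{L^p}+\|D^s\bff\|_{L^p}$.
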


We emphasize that this characterization is in the same spirit as classical characterizations of vector fields in the Bessel potential space. One such characterization is given by Stein  in \cite[Theorem 1]{Stein-Bessel} or \cite[Chapter V]{Stein} in terms of  smallness of the difference $\bff(\bdx + \bdy) - \bff(\bdx)$ via the Marcinkiewicz integral $\Upsilon^{s}(\bff)$. The result states that for $0 < s < 1$ and ${2_{*_{s}}} < p < \infty$, $\bff \in \cL^{p}_{s}(\bbR^d;\bbR^d)$ if and only if $\bff \in L^p(\bbR^d;\bbR^d)$ and $\Upsilon^{s}(\bff) \in L^p(\bbR^d)$.  A finer characterization that uses means over balls is also given in \cite{Strichartz} that is valid for the full range of $p\in (1, \infty)$. Various other characterizations have also been explored throughout the literature. However, the essence of Theorem \ref{Character-Potential} lies in the fact that  to determine if $\bff$ is a  Bessel potential of an $L^p$ vector field  we do not need the smallness of a measure of $\bff(\bdx + \bdy) - \bff(\bdx)$ but rather a measure of the quantity $[\bff(\bdx + \bdy) - \bff(\bdx)]\cdot {\bdy\over |\bdy|}$ via another Marcinkiewicz-type integral of $\bff $, $D^{s}({\bff})$.  Since for any $\bdx\in \bbR^{d}$ the pointwise estimate $D^{s}({\bff})(\bx) \leq \Upsilon^{s}(\bff)(\bdx)$ holds, the right-hand side inequality in Theorem \ref{Character-Potential} follows from the characterization in \cite[Theorem 1]{Stein-Bessel}. However, the left-hand side inequality is a refined version of  \cite[Theorem 1]{Stein-Bessel} since we are estimating $\| \bff\|_{\cL^{p}_{s}(\bbR^d)}$ in terms of the smaller function $D^{s}(\bff)$ in place of $\Upsilon^{s}(\bff)$. We will in fact show that the left hand side inequality in the theorem is valid for $p\in (1, \infty)$.

Our proof of   \autoref{Character-Potential} follows the steps presented in the proof of \cite[Theorem 1]{Stein-Bessel}.  We first develop the necessary technical tools that allow us to relate the Marcinkiewicz-type integral $D^{s}({\bff})$ with the potential function of $\bff$.  It turns out this is possible by introducing a Poisson-type integral of ${\bff}$ and a corresponding Littlewood-Paley $g$-function.  We will show that, in parallel with classical results, this new $g$-function can be used to characterize $L^{p}$ norms of vector fields.

\section{Meyers-type higher integrability result for coupled nonlocal systems}\label{Meyers-section}
In this section we prove \autoref{Meyers-type}. As we have discussed in the introduction our proof of  \autoref{Meyers-type} follows the argument presented in \cite{Auscher} in obtaining a higher integrability result for scalar nonlocal elliptic equations. We include a reproduction of their methods verifying its applicability for the strongly coupled system of equations under consideration. The first result we prove here is the higher integrability of $D^{s}({\bf u})$ 
using the analytic perturbation result. 
\begin{theorem}\label{Meyers-type-first step}
Suppose that $0<\alpha_1\leq \alpha_2<\infty$, $0<s<1$, $d\geq 2$, $0<\mathfrak{h}\leq \infty$, and $\delta_0 >0$ are given. Then there exists $\veps > 0$ depending only on $\alpha_1$, $\alpha_2$, $d$ and $s$ with the following property: if $p > 2$ satisfies the conditions \eqref{ExponentConstraints}, then there exists a constant $C > 0$ such that for any  $A(\bdx, \bdy)$ symmetric, measurable with $\alpha_1\leq A(\bdx, \bdy)\leq \alpha_2$ and any weak solution ${\bf u}\in W^{s,2}(\mathbb{R}^{d};\mathbb{R}^{d})$ in the sense of \eqref{defnweaksoln} corresponding to 
 ${\bf F} \in L^{2_{*_{s}}  + \delta_0}(\mathbb{R}^{d}; \mathbb{R}^{d}) \cap L^{2_{*_{s}}}(\mathbb{R}^{d}; \mathbb{R}^{d})$, 
we have
\[
\|{\bf u}\|_{L^{p}(\bbR^d)} + \|\Upsilon^{s}({\bf u})\|_{L^{p}(\bbR^d)} \leq C\left( \| {\bf F} \|_{L^{{2_{\ast_{s}}} + \delta_0}(\bbR^d)} + \| {\bf F} \|_{L^{{2_{\ast_{s}}}}(\bbR^d)} + \|{\bf u}\|_{W^{s,2}(\bbR^d)} \right)\,. 
\]
The constant $C$ depends only on $\alpha_1$, $\alpha_2$, $d$, $p$, $\delta_0$ and $\mathfrak{h}$.
\end{theorem}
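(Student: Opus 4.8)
The plan is to follow the functional-analytic perturbation scheme of Auscher, adapted to the coupled system. The starting point is to study the shifted operator $\bbI_d + \bbL$ (here $\bbL$ denotes the full-space, $\mathfrak{h}=\infty$ operator; the finite-horizon case is handled by the same argument since $\bbL_{\mathfrak{h}}$ differs from $\bbL$ by a bounded, smoothing term). First I would show that $\bbI_d + \bbL$ is an isomorphism from $W^{s,2}(\bbR^d;\bbR^d)$ onto $W^{-s,2}(\bbR^d;\bbR^d)$: coercivity on the energy space follows from the lower bound $A \geq \alpha_1$ together with the fractional Korn inequality of \cite{Scott-Mengesha} (i.e. equivalence of $\mathcal{X}^s_2$ with $W^{s,2}$), which controls $\|D^s(\bu)\|_{L^2}$ from below by the Gagliardo seminorm; boundedness follows from $A \leq \alpha_2$. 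The Lax--Milgram theorem then gives invertibility on the Hilbert scale. The crucial quantitative step is an analytic perturbation / complex interpolation argument: the family of operators $T_z := \bbI_d + \bbL$ viewed on the complex interpolation scale between $W^{s,2}$ and a slightly better space can be shown, using the Šne\u\i berg / Kato stability theorem, to remain invertible on $W^{s,p_0} \to W^{-s,p_0'}$ (equivalently between the appropriate Bessel potential spaces) for $p$ in a small open interval $[2, 2+\veps)$, where $\veps$ depends only on $\alpha_1,\alpha_2,d,s$ through the norm of the resolvent and the embedding constants. This is exactly where the restriction $p - 2 < \veps$ originates.

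Next I would set up the fixed-point / bootstrap that upgrades integrability of the given weak solution. Given $\bu \in W^{s,2}$ solving $\bbL_{\mathfrak{h}}\bu = \bF$, rewrite the equation as $(\bbI_d + \bbL_{\mathfrak{h}})\bu = \bF + \bu + (\bbL_{\mathfrak{h}} - \bbL)\bu$, so that $\bu = (\bbI_d + \bbL_{\mathfrak{h}})^{-1}(\bF + \bu + (\bbL_{\mathfrak{h}}-\bbL)\bu)$. Because $\bF \in L^{2_{*_s}+\delta_0} \cap L^{2_{*_s}}$ and $L^{2_{*_s}} \hookrightarrow W^{-s,2}$ by Sobolev embedding (dual form of $W^{s,2}\hookrightarrow L^{2^{*_s}}$), while the extra integrability $L^{2_{*_s}+\delta_0} \hookrightarrow W^{-s,r}$ for an $r > 2$ determined by the Sobolev relation — this is precisely the role of the third constraint in \eqref{ExponentConstraints}, which matches $\delta_0$ to how far above $2$ one can push $p$ — one applies the invertibility of $\bbI_d + \bbL_{\mathfrak{h}}$ on $W^{s,p} \to W^{-s,p}$ to conclude $\bu \in W^{s,p}$, hence $D^s(\bu) \in L^p$ by the Korn-type equivalence \eqref{Space-X}, and then $\Upsilon^s(\bu) \in L^p$ via Stein's characterization \cite{Stein-Bessel} once one knows $\bu \in \cL^s_p$ (or directly, since $W^{s,p}$ controls the Gagliardo $L^p$ seminorm which is $\|\Upsilon^s(\bu)\|_{L^p}$). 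Tracking constants through Lax--Milgram, the perturbation lemma, and the embeddings yields the stated estimate with $C = C(\alpha_1,\alpha_2,d,p,\delta_0,\mathfrak{h})$; the $\mathfrak{h}$-dependence enters only through the norm of the bounded perturbation $\bbL_{\mathfrak{h}} - \bbL$ and the normalizing constant $c_{\mathfrak{h}}$.

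I expect the main obstacle to be the self-improvement step itself, i.e.\ establishing invertibility of $\bbI_d + \bbL_{\mathfrak{h}}$ on $W^{s,p}$ for $p$ slightly above $2$. Two features make this harder than the scalar case treated in \cite{Auscher}. First, the operator is only elliptic in a Legendre--Hadamard sense rather than uniformly elliptic in its energy space: coercivity on $W^{s,2}$ is not immediate from $A \geq \alpha_1$ alone but genuinely requires the vectorial fractional Korn inequality, and one must verify the constant in that inequality is stable under the complex interpolation used in the Šne\u\i berg argument. Second, the perturbation from $\mathfrak{h} = \infty$ to finite horizon must be shown to be bounded not merely on $L^2$ but on the whole scale $W^{s,p}$ uniformly for $p$ near $2$; since the kernel truncation $|\bx - \by|^{-(d+2s)}\mathds{1}_{|\bx-\by|>\mathfrak{h}}$ is smooth and integrable, this reduces to a Young-type convolution estimate, but it has to be done with care so that it does not destroy the smallness of $\veps$. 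The remaining ingredients — Sobolev embeddings, the Korn equivalence, Stein's characterization, bookkeeping of exponents in \eqref{ExponentConstraints} — are routine once the perturbation lemma is in place.
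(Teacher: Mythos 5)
Your high-level framework is correct and matches the paper's: establish coercivity of a shifted operator $\varpi\bbI_d + \bbL$ on $W^{s,2}$ via the vectorial fractional Korn inequality, invoke Lax--Milgram, and then use \v{S}ne\u{\i}berg's perturbation theorem on a complex interpolation scale to propagate invertibility to nearby spaces; and you correctly identify the finite-horizon case as a bounded perturbation (the paper's $\bbL_{\mathfrak{h}} = c_{\mathfrak{h}}\bbL + \cP_{\mathfrak{h}}$, with $\cP_{\mathfrak{h}}$ bounded on $L^p$ by Young's inequality). However, there is a genuine gap in the final step of your bootstrap, and it is not merely a loose end.

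You propagate invertibility only on the scale $W^{s,p} \to [W^{s,p'}]^*$, keeping the differentiability fixed at $s$ and raising only $p$, and then claim that $\bu \in W^{s,p}$ implies $D^s(\bu) \in L^p$ (via the Korn equivalence~\eqref{Space-X}) and hence $\Upsilon^s(\bu)\in L^p$. Both deductions fail for $p>2$. The space $\mathcal{X}^s_p$ in~\eqref{Space-X} is defined by finiteness of $\iint |(\bu(\bx)-\bu(\by))\cdot\frac{\bx-\by}{|\bx-\by|}|^p / |\bx-\by|^{d+sp}$, which coincides with $\|D^s(\bu)\|_{L^p}^p$ only when $p=2$; for $p\neq 2$ the inner exponent is $p$, not $2$. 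Likewise, $\|\Upsilon^s(\bu)\|_{L^p}$ is the $L^p$ norm of a square function (Triebel--Lizorkin $F^s_{p,2}$, i.e.\ $\cL^s_p$), whereas $[\bu]_{W^{s,p}}$ is the Besov $B^s_{p,p}$ seminorm; for $p>2$ the inclusion goes the wrong way, $\cL^s_p \subsetneq W^{s,p}$, so membership in $W^{s,p}$ does \emph{not} control $\Upsilon^s(\bu)$. Your parenthetical ``$W^{s,p}$ controls the Gagliardo $L^p$ seminorm which is $\|\Upsilon^s(\bu)\|_{L^p}$'' is simply wrong, and your other alternative (``once one knows $\bu\in\cL^s_p$'') is exactly what is missing.

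The paper closes this gap by using \v{S}ne\u{\i}berg on the genuinely two-parameter family $(t, 1/p)$ with $t+t'=2s$: it proves invertibility of $\varpi\bbI_d+\bbL : W^{t,p}\to[W^{t',p'}]^*$ in an $\veps$-neighborhood of $(s,1/2)$ in the $(t,1/p)$-plane, and then, crucially, chooses $t = \tfrac{d}{2}-\tfrac{d}{p}+s$, which for $p>2$ satisfies $t>s$. Landing in $W^{t,p}$ with strictly more smoothness than $s$ is what makes $\|\Upsilon^s(\bu)\|_{L^p}\leq C\|\bu\|_{W^{t,p}}$ true (since $B^t_{p,p}\hookrightarrow F^s_{p,2}$ when $t>s$), and it is also what makes the right-hand side finite: the dual Sobolev embedding places $\bF \in L^r \hookrightarrow [W^{t',p'}]^*$ with precisely $r = \tfrac{dp}{(2-p/2)d+sp}$, which is where the third constraint in~\eqref{ExponentConstraints} involving $\delta_0$ comes from. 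In short: the self-improvement you need is in \emph{both} integrability and differentiability simultaneously; keeping the smoothness index pinned at $s$ makes the argument break at the last step.
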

By the log-convexity of $L^p$ norms the data ${\bf F}$ belongs to $L^r(\bbR^d;\bbR^d)$ for any $r \in (2_{*_s}, 2_{*_s} + \delta_0)$. In fact, we will see that one can replace the two $L^p$ norms of $\bF$ on the right-hand side of the regularity estimate with the single term $\Vnorm{\bF}_{L^r}$, where $\textstyle r = \frac{dp}{(2-\frac{p}{2})d+sp}$.

We notice that  \autoref{Meyers-type}  now follows as a corollary of this theorem. Indeed, applying the characterization theorem (\autoref{Character-Potential}) we see that ${\bf u}\in \mathcal{L}^{p}_{s}(\mathbb{R}^{d})$. Moreover, 
by applying the well-known embedding \cite[Chapter V]{Stein} of the spaces $ \mathcal{L}^{p}_{s}(\mathbb{R}^{d}) \subset W^{s, p}(\bbR^{d};\bbR^{d})$ for $p > 2$, \autoref{Meyers-type} implies a higher Besov space regularity result for solutions.

\subsection{Key Inequalities}
Before we prove \autoref{Meyers-type-first step}, we first state two inequalities. The first inequality is the classical fractional Sobolev embedding. The second inequality we call a ``Fractional Korn's Inequality," and establishes the equivalence of the space $\mathcal{X}^{s}_{q}(\mathbb{R}^{d})$ introduced in \eqref{Space-X} with the standard fractional Sobolev space $W^{s,q}(\mathbb{R}^{d};\mathbb{R}^{d})$ for any $s \in (0, 1)$ and $q\in (1, \infty)$. This inequality will be used to prove an invertibility result for the operator $\varpi\bbI_d + \bbL$ for some $\varpi > 0$.

\begin{lemma}[\hspace{-0.2pt}\cite{Auscher}, Lemma 4.2]\label{lma:SobolevEmbedding}
Suppose $s \in (0,1)$ and $q \in [1,\infty)$. Suppose also that $sq < d$. Then
\begin{equation}
\Vnorm{\bu}_{L^{q^{*_s}}(\bbR^d)} \leq C [\bu]_{W^{s,q}(\bbR^d)}\,.
\end{equation}
In particular, $W^{s,q}(\bbR^d;\bbR^d) \subset L^{q^{*_s}}(\bbR^d;\bbR^d)$ and $W^{s,q_{*_s}}(\bbR^d;\bbR^d) \subset L^{q}(\bbR^d;\bbR^d)$.
\end{lemma}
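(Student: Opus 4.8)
This is the classical fractional Sobolev (Gagliardo) inequality for the space $W^{s,q}$, and the plan is to carry out the standard real-variable proof --- via a pointwise estimate and the Hardy--Littlewood maximal function --- which handles the whole range $q\in[1,\infty)$ uniformly. Two reductions come first. Since $|u_i(\bx)-u_i(\by)|\le|\bu(\bx)-\bu(\by)|$ for every component, the scalar inequality applied componentwise and summed yields the vectorial statement up to a dimensional constant, so it suffices to treat scalar $u$. Next, since $C^{\infty}_c(\bbR^d)$ is dense in $W^{s,q}(\bbR^d)$ for $s\in(0,1)$, $q\in[1,\infty)$, it is enough to establish the a priori bound $\Vnorm{u}_{L^{q^{*_s}}(\bbR^d)}\le C[u]_{W^{s,q}(\bbR^d)}$ for $u\in C^{\infty}_c(\bbR^d)$: granting that, for general $u$ one takes $u_n\to u$ in $W^{s,q}$, observes that $\{u_n\}$ is then Cauchy in $L^{q^{*_s}}$, identifies its $L^{q^{*_s}}$-limit with $u$ via a.e.\ convergence along a subsequence, and passes to the limit --- which also yields $u\in L^{q^{*_s}}$.

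For the a priori bound, fix $u\in C^{\infty}_c(\bbR^d)$. For $\bx\in\bbR^d$ and $R>0$, telescoping the means of $u$ over the balls $B_{2^{-k}R}(\bx)$, $k\ge0$, gives $|u(\bx)|\le c\sum_{k\ge0}\fint_{B_{2^{-k}R}(\bx)}|u(\bx)-u(\by)|\,\mathrm{d}\by+|\fint_{B_R(\bx)}u\,\mathrm{d}\by|$; H\"older's inequality bounds the $k$-th term by $c\,(2^{-k}R)^{s}h(\bx)$, where $h(\bx):=\big(\int_{\bbR^d}\frac{|u(\bx)-u(\by)|^{q}}{|\bx-\by|^{d+sq}}\,\mathrm{d}\by\big)^{1/q}$ satisfies $\Vnorm{h}_{L^{q}(\bbR^d)}=[u]_{W^{s,q}(\bbR^d)}$, so summing the geometric series yields $|u(\bx)|\le c\,R^{s}h(\bx)+|\fint_{B_R(\bx)}u\,\mathrm{d}\by|$ for every $R>0$ (and trivially $|\fint_{B_R(\bx)}u\,\mathrm{d}\by|\le\mathcal{M}u(\bx)$, the Hardy--Littlewood maximal function). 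Writing $t:=|u(\bx)|>0$ and choosing $R=R(\bx)$ so that $|\fint_{B_R(\bx)}u\,\mathrm{d}\by|=t/2$ --- possible because this mean tends to $|u(\bx)|$ as $R\to0$ and to $0$ as $R\to\infty$ ($u$ having compact support) --- forces $R(\bx)^{s}\gtrsim t/h(\bx)$. A level-set argument (Maz'ya's truncation trick together with a Vitali covering, applied to the sets $\{|u|>t\}$) then converts this pointwise information into the scaling-invariant strong bound $\Vnorm{u}_{L^{q^{*_s}}(\bbR^d)}\le C\Vnorm{h}_{L^{q}(\bbR^d)}=C[u]_{W^{s,q}(\bbR^d)}$. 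I expect this last step --- passing from the pointwise inequality and the radius estimate $R(\bx)^{s}\gtrsim t/h(\bx)$ to the endpoint $L^{q^{*_s}}$ bound, uniformly in $q$ --- to be the main obstacle; the reductions and the H\"older estimate are routine. (For $1<q\le2$ a shorter route is available, via $\Vnorm{(-\Delta)^{s/2}u}_{L^{q}(\bbR^d)}\lesssim[u]_{W^{s,q}(\bbR^d)}$ and the Hardy--Littlewood--Sobolev boundedness of the Riesz potential $I_{s}\colon L^{q}(\bbR^d)\to L^{q^{*_s}}(\bbR^d)$ applied to $(-\Delta)^{s/2}u$; but $q\ge2$ is the case needed below, so the direct argument is the one to carry out.)

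Finally, the two stated inclusions. The first, $W^{s,q}(\bbR^d;\bbR^d)\subset L^{q^{*_s}}(\bbR^d;\bbR^d)$, is immediate from the inequality, since $[\bu]_{W^{s,q}}\le\Vnorm{\bu}_{W^{s,q}}$. For the second, $W^{s,q_{*_s}}(\bbR^d;\bbR^d)\subset L^{q}(\bbR^d;\bbR^d)$, apply the inequality just proved with $q_{*_s}=dq/(d+sq)$ in place of $q$: one checks at once that $s\,q_{*_s}<d$ always holds and that $(q_{*_s})^{*_s}=q$, whence $\Vnorm{\bu}_{L^{q}(\bbR^d)}\le C[\bu]_{W^{s,q_{*_s}}(\bbR^d)}\le C\Vnorm{\bu}_{W^{s,q_{*_s}}(\bbR^d)}$ --- valid whenever $q_{*_s}\ge1$, i.e.\ $q\ge d/(d-s)$, which covers the exponents used later in the paper.
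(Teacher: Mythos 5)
The paper does not supply its own proof of this lemma: it is imported verbatim from \cite{Auscher} (their Lemma 4.2) as a known fact, so there is nothing internal to compare against. Judged on its own, your skeleton is the right one for the full range $q\in[1,\infty)$ with $sq<d$: telescoping dyadic means, the H\"older step producing the factor $(2^{-k}R)^{s}h(\bx)$ with $h(\bx)=\bigl(\int_{\bbR^d}|u(\bx)-u(\by)|^{q}|\bx-\by|^{-d-sq}\,\mathrm{d}\by\bigr)^{1/q}$, and the summed pointwise bound $|u(\bx)|\le cR^{s}h(\bx)+\bigl|\fint_{B_R(\bx)}u\bigr|$ are all correct, as are the reductions to scalar, compactly supported $u$ and the two claimed inclusions (your check that $s\,q_{*_s}<d$ and $(q_{*_s})^{*_s}=q$ is right, together with the caveat $q_{*_s}\ge1$).

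The one genuine gap is the last step. You write ``choose $R(\bx)$ so that $\bigl|\fint_{B_{R}(\bx)}u\bigr|=t/2$, hence $R(\bx)^{s}\gtrsim t/h(\bx)$, then a Maz'ya truncation together with a Vitali covering yields the $L^{q^{*_s}}$ bound.'' That is an appeal to a nontrivial mechanism whose details you do not give, and the Maz'ya truncation route (dyadic truncations of $u$, weak-type estimate for each, summation) is considerably more than what is needed here. The clean way to close the argument from exactly the pointwise inequality you already have is to bound the mean by H\"older instead of by the maximal function: for $u\in C_c^\infty$, $\bigl|\fint_{B_R(\bx)}u\bigr|\le C\,\Vnorm{u}_{L^{q^{*_s}}}R^{-d/q^{*_s}}$, so $|u(\bx)|\le cR^{s}h(\bx)+C\Vnorm{u}_{L^{q^{*_s}}}R^{-d/q^{*_s}}$ for every $R>0$. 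Optimizing in $R$ (the exponents combine because $s+d/q^{*_s}=d/q$) gives the pointwise bound $|u(\bx)|^{q^{*_s}}\le C\,\Vnorm{u}_{L^{q^{*_s}}}^{\,q^{*_s}-q}\,h(\bx)^{q}$; integrating in $\bx$ and dividing by $\Vnorm{u}_{L^{q^{*_s}}}^{\,q^{*_s}-q}$ (finite and nonzero for $u\in C_c^\infty\setminus\{0\}$) yields $\Vnorm{u}_{L^{q^{*_s}}}\le C\Vnorm{h}_{L^q}=C[u]_{W^{s,q}}$. This avoids Vitali covering and truncation entirely, works uniformly for $q\in[1,\infty)$ including the $q=1$ endpoint, and is exactly the kind of scaling-invariant closure your pointwise estimate was set up to produce. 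I'd replace the ``level-set argument'' sentence with this optimization. One further small remark: the parenthetical claim that the Riesz potential route is limited to $1<q\le2$ is off --- Hardy--Littlewood--Sobolev covers all $1<q<\infty$; it is only $q=1$ that genuinely requires the real-variable argument.
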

 
\begin{theorem}[\hspace{-0.2pt}\cite{Scott-Mengesha} {\bf Fractional Korn's inequality}]\label{theorem-korn}
For any $s\in (0, 1)$ and $1 < q < \infty$,  
$$
\mathcal{X}^{s}_{q}(\mathbb{R}^{d}) = W^{s, q}(\mathbb{R}^{d};\mathbb{R}^{d})\,.
$$
Moreover, there exists a universal constant $\kappa = \kappa(d, q, s)$ such that for all vector fields ${\bf f}\in W^{s, q}(\mathbb{R}^{d};\mathbb{R}^{d})$ we have 
\begin{equation}\label{frackorn}
[{\bf f}]_{\mathcal{X}^{s}_{q}(\bbR^d)} \leq  [{\bf f}]_{W^{s,q}(\bbR^d)} \leq \kappa [{\bf f}]_{\mathcal{X}^{s}_{q}(\bbR^d)}\,.
\end{equation}
\end{theorem}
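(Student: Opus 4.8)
The goal is to show that, for $s\in(0,1)$ and $1<q<\infty$, the seminorm $[\,\cdot\,]_{\cX^s_q}$ built only from the \emph{normal} component of the difference quotient controls the full Gagliardo seminorm $[\,\cdot\,]_{W^{s,q}}$, the reverse inequality being trivial since $|(\bv(\bx)-\bv(\by))\cdot\tfrac{\bx-\by}{|\bx-\by|}|\le|\bv(\bx)-\bv(\by)|$. The natural route is through the Fourier/Mikhlin picture for $q=2$ and then a singular-integral (Calderón--Zygmund) argument for general $q$. First I would record the identity for the nonlocal ``distortion'' operator: writing $\cG^s$ for the operator whose quadratic form is $[\bv]^2_{\cX^s_2}$, one computes its Fourier symbol and finds it is comparable, as a matrix-valued multiplier, to $|\bfxi|^{2s}$ times a fixed positive-definite combination of the identity and the projection $\bfxi\otimes\bfxi/|\bfxi|^2$; the key point is that this symbol is \emph{bounded below} by $c|\bfxi|^{2s}\mathrm{Id}$, which is exactly where the non-degeneracy (and the use of $d\ge 2$, though the statement here allows $d\ge 1$ implicitly) enters. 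This gives the $q=2$ case with a clean constant.

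For $q\ne 2$ the plan is to construct an explicit bounded operator $\bT$ on $L^q(\bbR^d;\bbR^d)$, given by a matrix of Fourier multipliers homogeneous of degree $0$ (hence Calderón--Zygmund/Mikhlin operators), such that $(-\Delta)^{s/2}\bv = \bT\big[\text{normal fractional differences of }\bv\big]$ in an appropriate sense; concretely one writes $(-\Delta)^{s/2}\bv$ as a vector singular integral against $\bv(\bx)-\bv(\by)$ and then ``rotates'' each difference into its radial and tangential parts, absorbing the tangential part by an averaging-over-directions trick that reconstructs it from the radial parts in nearby directions. Equivalently, and perhaps more cleanly, I would realize $[\bv]_{W^{s,q}}$ via the Stein--Strichartz-type characterization already invoked in the paper (the $\Upsilon^s$ integral) and bound $\Upsilon^s(\bv)$ pointwise-in-$L^q$ by $D^s$-type quantities of $\bv$ composed with rotations, using that the average of $|(\bw\cdot e)|^2$ over $e\in S^{d-1}$ is a dimensional constant times $|\bw|^2$. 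Feeding this through Minkowski's integral inequality and the rotation-invariance of the $L^q$ norm yields $[\bv]_{W^{s,q}}\le \kappa[\bv]_{\cX^s_q}$ with $\kappa=\kappa(d,q,s)$, and the identification $\cX^s_q = W^{s,q}$ as sets follows since each seminorm finite forces the other finite (density of Schwartz functions handling the passage from the a priori estimate to all of $W^{s,q}$).

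The equality of the two function spaces as sets is then immediate from the two-sided seminorm bound plus the common $L^q$ component: $\bv\in\cX^s_q$ iff $\bv\in L^q$ and $[\bv]_{\cX^s_q}<\infty$ iff $\bv\in L^q$ and $[\bv]_{W^{s,q}}<\infty$ iff $\bv\in W^{s,q}$. One should be slightly careful about the endpoint structure and about whether $\cX^s_q$ is a priori complete; this is handled by first proving \eqref{frackorn} for $\bv\in\cS(\bbR^d;\bbR^d)$ and then extending by density, noting that the $\cX^s_q$-seminorm is lower semicontinuous under $L^q$ convergence (Fatou) so no mass is lost in the limit.

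\textbf{Main obstacle.} The delicate step is the $q\ne 2$ estimate: the operator sending normal fractional differences of $\bv$ back to $(-\Delta)^{s/2}\bv$ is \emph{not} a pointwise operation and its boundedness on $L^q$ is the whole content — one must exhibit it as a vector-valued Calderón--Zygmund operator (or a finite superposition of rotations of scalar ones) with a kernel satisfying the Hörmander condition, and verify that the ``rotation/averaging reconstruction of the tangential part'' is legitimate at the level of singular integrals rather than just formal symbols. Controlling the constant's dependence on $q$ (blow-up as $q\to 1,\infty$) and confirming it does \emph{not} depend on anything but $d,q,s$ is where the real work lies; everything else is bookkeeping with Schwartz-density and Fatou.
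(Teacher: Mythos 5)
This theorem is quoted in the paper from \cite{Scott-Mengesha} and is not re-proved here, so there is no internal argument to compare against; the following assesses your proposal on its own terms. Your $q=2$ step is correct and standard: the quadratic form $[\bv]^2_{\mathcal{X}^s_2}$ has Fourier symbol $|\bfxi|^{2s}\big(a\,\bbI_d + b\,\tfrac{\bfxi\otimes\bfxi}{|\bfxi|^2}\big)$ with $a,b>0$ depending only on $d,s$, which is two-sidedly comparable to $|\bfxi|^{2s}\bbI_d$, and Plancherel finishes. The gap is in the passage to $q\neq 2$, and specifically the ``average over rotations'' device, which is circular. Writing $(R\bv)(\bx):=R\bv(\bx)$ for $R\in SO(d)$, you do have the pointwise identity $\intdm{SO(d)}{|(\bv(\bx)-\bv(\by))\cdot Re|^2}{R}=\tfrac{1}{d}|\bv(\bx)-\bv(\by)|^2$ for any unit $e$, and hence a pointwise reconstruction of $\Upsilon^s(\bv)$ from the average of $D^s(R\bv)$ over $R$. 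But to convert this to an $L^q$ estimate you need the uniform bound $\Vnorm{D^s(R\bv)}_{L^q}\leq C\Vnorm{D^s(\bv)}_{L^q}$ for all $R$, and that is not a consequence of any symmetry. The only genuine invariance is the covariant one, $D^s\big(R^T\bv(R\,\cdot)\big)(\bx)=D^s(\bv)(R\bx)$, which rotates arguments and values in tandem and gives nothing new. Rotating only the values can transfer mass from tangential to normal differences, and for $q\geq 2$ the bound $\Vnorm{D^s(R\bv)}_{L^q}\lesssim\Vnorm{D^s(\bv)}_{L^q}$ is essentially a reformulation of the Korn inequality itself (Korn implies it by sandwiching through $[\bv]_{W^{s,q}}$; conversely your averaging step would recover Korn from it). For $q<2$, Minkowski's integral inequality even runs in the wrong direction for the plan. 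So the averaging heuristic does not bypass the hard step; it restates it.

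Two further points deserve care. First, the seminorm actually controlled in the theorem is $[\bv]_{\mathcal{X}^s_q}$, in which the $q$-th power sits \emph{inside} the double integral; for $q\neq 2$ this is not the same object as $\Vnorm{D^s(\bv)}_{L^q}$ (square inside, $q$-th power outside), and your proposal slides between the two. Second, the Stein--Strichartz $\Upsilon^s$-characterization you invoke captures the Bessel-potential space $\cL^q_s$, not $W^{s,q}$; these differ for $q>2$, so ``realizing $[\bv]_{W^{s,q}}$ via the $\Upsilon^s$ integral'' is not immediate. Your alternate route, building a genuine Calder\'on--Zygmund operator that reconstructs the full fractional gradient from the normal-difference data via a degree-zero multiplier of the form $\bbI_d+c\,\tfrac{\bfxi\otimes\bfxi}{|\bfxi|^2}$ and Riesz transforms, is the right idea and in spirit what the cited reference does, but as written it remains a plan. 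You correctly flag it as the main obstacle; the issue is that it is not a deferrable technicality but the entire content of the $q\neq 2$ case, and the averaging step cannot substitute for it.
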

For a given bounded domain $\Omega,$ the function space $\mathcal{X}^{s}_{q}(\Omega)$ is defined in the same way as $\mathcal{X}^{s}_{q}(\mathbb{R}^{d})$ but the functions are defined in $\Omega$ and the integrations in the semi-norm are on $\Omega$. 
We call the above theorem a ``Fractional Korn's inequality" 
because it has been shown in \cite{Mengesha} that
$$
\lim\limits_{s \to 1^-} (1-s) \left[ \bff \right]_{\cX^s_q(\Omega)} = \Vnorm{(\grad \bff)_{Sym}}_{L^q(\Omega)} \quad \text{ for every } \bff \in W^{1,q}_{Sym}(\Omega;\bbR^d)\,.
$$
This association suggests that $\mathcal{X}_{q}^{s}(\Omega)$ is the fractional analogue  of  $W^{1,q}_{Sym}(\Omega;\mathbb{R}^{d})$, 
which in turn is known to coincide with $W^{1, q}(\Omega;\mathbb{R}^{d})$ via the classical Korn's inequality.  \autoref{theorem-korn} extends  this equivalence to fractional spaces defined on the whole space.

\subsection{Higher integrability when the horizon is infinite} 
In this subsection we will prove \autoref{Meyers-type-first step} 
when the horizon is infinite. In this case we recall that the bilinear form we use is $\mathcal{E}$, the operator is written $\bbL$, and $c_{\infty}=1$.
We will show that there exists a constant $\varpi > 0$ such that the operator $\textstyle \varpi \bbI_d + \bbL$ from $W^{s,2}(\bbR^d;\bbR^d)$ to $[W^{s,2}(\bbR^d;\bbR^d)]^*$ is invertible, where we use the notation $Y^*$ for the dual space of $Y$. We then use a perturbation lemma of Schneiberg reproduced in \cite{Auscher} to deduce that in fact $\textstyle \varpi \bbI_d + \bbL$ is invertible on a range of ``nearby" fractional Sobolev spaces forming a complex interpolation scale. This is the key step in proving the higher potential space regularity result for weak solutions in the sense of \eqref{defnweaksoln}.
 
\begin{lemma}\label{lma:InvertiblityofI+L}
Let $0<\alpha_1\leq A(\bx,\by) \leq \alpha_2<\infty$, $0<s<1$ and $d\geq 2$. Let $t$, $t' \in (0,1)$ and $p$, $p' \in (1,\infty)$ satisfy $t + t' = 2s$ and $\textstyle p' = \frac{p}{p-1}$. Then there exists $\veps > 0$ such that if $\textstyle \frac{1}{2} - \frac{1}{p} < \veps$ and $t-s < \veps$, then
\begin{equation*}
\varpi\bbI_d + \bbL : W^{t,p}(\bbR^d;\bbR^d) \to [W^{t',p'}(\bbR^d;\bbR^d)]^*
\end{equation*}
is invertible, where $\varpi = \frac{\alpha_1}{\kappa}$ and $\kappa$ is as in \eqref{frackorn} corresponding to $q=2$. Any inverse agrees with the inverse obtained for $t=s$ and $p=2$ on their common domains of definition. Both $\veps$ and the norms of the inverses depend only on $d$, $s$, and the ellipticity constants $\alpha_1$ and $\alpha_2$.
\end{lemma}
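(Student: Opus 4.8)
The plan is to establish invertibility first at the distinguished exponent pair $t=s$, $p=2$ via the Lax--Milgram theorem and the fractional Korn inequality, and then to transport it to all nearby exponents by complex interpolation together with Schneiberg's perturbation lemma as reproduced in \cite{Auscher}. For the base case take $t=t'=s$, $p=p'=2$ and consider on $W^{s,2}(\bbR^d;\bbR^d)$ the bilinear form $\mathcal{B}(\bu,\bv):=\varpi\langle\bu,\bv\rangle_{L^2}+\mathcal{E}(\bu,\bv)$. Boundedness follows from $A\le\alpha_2$, the Cauchy--Schwarz inequality, and the elementary half $[\cdot]_{\mathcal{X}^s_2}\le[\cdot]_{W^{s,2}}$ of \autoref{theorem-korn}, giving $|\mathcal{E}(\bu,\bv)|\le\tfrac{\alpha_2}{2}[\bu]_{W^{s,2}}[\bv]_{W^{s,2}}$; coercivity follows from $A\ge\alpha_1$ and the reverse bound $[\bu]_{W^{s,2}}\le\kappa[\bu]_{\mathcal{X}^s_2}$ of \autoref{theorem-korn} at $q=2$, since then
\[
\mathcal{B}(\bu,\bu)\ \ge\ \varpi\Vnorm{\bu}_{L^2}^2+\tfrac{\alpha_1}{2}[\bu]_{\mathcal{X}^s_2}^2\ \ge\ \tfrac{\alpha_1}{2\kappa^2}\Vnorm{\bu}_{W^{s,2}}^2
\]
(using $\kappa\ge1$); the stated choice $\varpi=\alpha_1/\kappa$ is consistent with this, the exact value being immaterial. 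Lax--Milgram then gives that $\varpi\bbI_d+\bbL\colon W^{s,2}(\bbR^d;\bbR^d)\to[W^{s,2}(\bbR^d;\bbR^d)]^*$ is an isomorphism whose norm and whose inverse's norm are controlled by $d$, $s$, $\alpha_1$, $\alpha_2$ alone.

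Next I would show that $\varpi\bbI_d+\bbL$ forms a uniformly bounded, mutually consistent family over the admissible exponents. For any $(t,p)$ with $t+t'=2s$, $t,t'\in(0,1)$, $p'=p/(p-1)$, $p,p'\in(1,\infty)$, factor the kernel $|\bx-\by|^{-(d+2s)}=|\bx-\by|^{-(d+tp)/p}\,|\bx-\by|^{-(d+t'p')/p'}$ --- the exponents add to $d+2s$ because $\tfrac1p+\tfrac1{p'}=1$ and $t+t'=2s$ --- bound $A\le\alpha_2$, and apply H\"older's inequality in $(\bx,\by)$ with exponents $p$ and $p'$; recognizing the resulting factors as the seminorms in \eqref{Space-X} and invoking \autoref{theorem-korn} once more yields
\[
|\mathcal{E}(\bu,\bv)|\ \le\ \tfrac{\alpha_2}{2}\,[\bu]_{\mathcal{X}^t_p}\,[\bv]_{\mathcal{X}^{t'}_{p'}}\ \le\ \tfrac{\alpha_2}{2}\,[\bu]_{W^{t,p}}\,[\bv]_{W^{t',p'}}.
\]
Together with $|\varpi\langle\bu,\bv\rangle|\le\varpi\Vnorm{\bu}_{L^p}\Vnorm{\bv}_{L^{p'}}$, this shows $\varpi\bbI_d+\bbL\colon W^{t,p}(\bbR^d;\bbR^d)\to[W^{t',p'}(\bbR^d;\bbR^d)]^*$ is bounded with norm uniform in $(t,p)$; and since on Schwartz functions the operator is given by one fixed integral formula, the various realizations are mutually consistent.

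Finally I would run the interpolation/perturbation argument. Using reflexivity and duality, identify the targets as $[W^{t',p'}(\bbR^d;\bbR^d)]^*=W^{-t',p}(\bbR^d;\bbR^d)=W^{t-2s,p}(\bbR^d;\bbR^d)$, legitimate since $0<t'<1$. The pair $(s,2)$ is an interior point of the admissible region $\{(t,1/p):\max(0,2s-1)<t<\min(1,2s),\ 0<1/p<1\}$; given any admissible $(t_1,p_1)$, pick a line segment in this region through $(s,2)$ and use the standard complex interpolation identity $[W^{t^{(0)},p^{(0)}},W^{t^{(1)},p^{(1)}}]_\theta=W^{t_\theta,p_\theta}$ for its endpoints (and likewise for the shifted scale $W^{t_\theta-2s,p_\theta}$) to realize the domain and target families as complex interpolation scales along which $\varpi\bbI_d+\bbL$ is a consistent bounded family, by the previous step, that is invertible at the interior parameter, by the base case. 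Schneiberg's lemma then produces $\veps>0$ --- depending only on the operator and inverse norms at $(s,2)$ and on the uniform constants of the scale, hence only on $d$, $s$, $\alpha_1$, $\alpha_2$ --- such that $\varpi\bbI_d+\bbL$ is invertible at every scale point within $\veps$ of $(s,2)$, with uniformly bounded inverses agreeing with the base inverse on common domains (this compatibility being part of Schneiberg's conclusion). Since $\veps$ does not depend on the chosen segment, invertibility holds throughout a neighborhood of $(s,2)$; shrinking $\veps$, this neighborhood contains every admissible $(t,p)$ with $\tfrac12-\tfrac1p<\veps$ and $t-s<\veps$, which is the claim. The one genuinely delicate point is this last step: one must realize the duals $[W^{t',p'}]^*$ as honest members of a complex interpolation scale compatible with the domain scale --- the duality and the Besov identification both enter, and the exclusion of integer smoothness must be tracked --- and verify the precise hypotheses of Schneiberg's lemma (consistency of the family; uniformity of the scale constants) so that $\veps$ depends on nothing beyond $d$, $s$, $\alpha_1$, $\alpha_2$; the rest is routine given \autoref{theorem-korn}.
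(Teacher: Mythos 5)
Your proposal follows the paper's proof step for step: Lax--Milgram with the fractional Korn inequality at the central point $(s,2)$, uniform boundedness of $\varpi\bbI_d+\bbL$ on the scale via H\"older (you write out the kernel factorization that the paper leaves implicit, which is a slightly cleaner account of the same estimate), and then the complex-interpolation scale plus Schneiberg's perturbation lemma. The only cosmetic differences are that you realize the duals as $W^{t-2s,p}$ spaces, whereas the paper works directly with the dual scale $[[W^{t_0',p_0'}]^*,[W^{t_1',p_1'}]^*]_\theta$, and you track the power of $\kappa$ more carefully ($\kappa^2$ rather than $\kappa$) in the coercivity bound; neither affects the argument.
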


\begin{proof}
The objective here is to show that the perturbation lemma of Schneiberg (see \cite[Theorem A.1]{Auscher}) can be applied to the operator $\textstyle \varpi  \bbI_d + \bbL$.
We will prove the theorem in three steps.

\noindent
{\bf Step I}: We show that $\textstyle \varpi  \bbI_d + \bbL : W^{s,2}(\bbR^d;\bbR^d) \to [W^{s,2}(\bbR^d;\bbR^d)]^*$ is bounded and invertible. 
To verify the boundedness of the operator, it suffices to check if $\bbL$ is a bounded operator $ W^{s,2}(\bbR^d;\bbR^d) $ to its dual space. But this is a consequence of the definition of $\bbL$ and H\"older's inequality. Indeed, it follows from the fact that for any ${\bf u}, {\bf v}\in W^{s,2}(\bbR^d;\bbR^d)$\[
|\Vint{\bbL \bu, \bv}| = |\cE(\bu,\bv)| \leq \alpha_2 [\bu]_{\cX^{s,2}(\bbR^d)} [\bv]_{\cX^{s,2}(\bbR^d)} \leq \alpha_2 \Vnorm{\bu}_{W^{s,2}(\bbR^d)} \Vnorm{\bv}_{W^{s,2}(\bbR^d)}. 
\]
To see that $\varpi \bbI_d + \bbL$ is invertible, note that by the ellipticity assumptions on $A$ and by the fractional Korn's inequality Theorem \ref{theorem-korn} applied to $q=2$, we have 
and 
\[
|\Vint{\bbL \bu, \bu}| \geq \alpha_1 [\bu]_{\cX^{s,2}(\bbR^d)}^2 \geq \frac{\alpha_1}{\kappa} [\bu]_{W^{s,2}(\bbR^d)}^2\, = \varpi [\bu]_{W^{s,2}(\bbR^d)}^2, 
\]
implying that $\textstyle \Vint{\varpi  \bu + \bbL \bu, \bu} \geq \varpi \Vnorm{\bu}_{W^{s,2}(\bbR^d)}^2$. Invertibility of the operator $\varpi  \bu + \bbL$  is now a consequence of the well-known Lax-Milgram theorem. 

\noindent{\bf Step II}: The operator $\textstyle \varpi  \bbI_d + \bbL$ extends from $C^{\infty}_c(\bbR^d;\bbR^d)$ by density to a bounded operator $W^{t,p}(\bbR^d;\bbR^d) \to [W^{t',p'}(\bbR^d;\bbR^d)]^*$, also denoted by $\textstyle \varpi  \bbI_d  + \bbL$. 
To see this for any $\bu \in W^{t,p}(\bbR^d;\bbR^d)$ and any $\bv \in W^{t',p'}(\bbR^d;\bbR^d)$, by H\"older's inequality
\begin{equation*}
\begin{split}
& \left| \Vint{\varpi  \bu + \bbL \bu, \bv} \right| \leq \varpi \Vnorm{\bu}_{L^p(\bbR^d)} \Vnorm{\bv}_{L^{p'}(\bbR^d)} \\
&\qquad + \left|  \intdm{\bbR^d} { \intdm{\bbR^d}{A(\bx,\by)\frac{\big( \bu(\bx)-\bu(\by) \big) \cdot \frac{\bx-\by}{|\bx-\by|}}{|\bx-\by|^{\frac{d}{p}+t}} \cdot \frac{\big( \bv(\bx)-\bv(\by) \big) \cdot \frac{\bx-\by}{|\bx-\by|}}{|\bx-\by|^{\frac{d}{p'}+t'}}}{\by}}{\bx} \right| \\
& \qquad \qquad \leq \varpi  \Vnorm{\bu}_{L^p(\bbR^d)} \Vnorm{\bv}_{L^{p'}(\bbR^d)} + \alpha_2 [\bu]_{W^{t,p}(\bbR^d)} [\bv]_{W^{t',p'}(\bbR^d)}\,.
\end{split}
\end{equation*}
Since $C^{\infty}_c(\bbR^d;\bbR^d)$ is dense in all fractional Sobolev spaces involved, it follows then that the operator $\textstyle\varpi \bbI_d + \bbL : W^{s,2}(\bbR^d;\bbR^d) \to [W^{s,2}(\bbR^d;\bbR^d)]^{*}$ extends to a bounded operator $W^{t,p}(\bbR^d;\bbR^d) \to [W^{t',p'}(\bbR^d;\bbR^d)]^*$.

\noindent{\bf Step III}: The remaining argument is essentially the same as the one presented in \cite{Auscher} with the modification that it now applies to fractional Sobolev spaces of vector fields. We include it here for completeness. We begin by displaying the complex interpolation scale of the fractional Sobolev spaces of vector fields, and then use the two steps we proved above to verify the assumptions of Schneiberg's lemma \cite[Theorem A.1]{Auscher}.
Denoting the scale of complex interpolation spaces between two Banach spaces $X_0$ and $X_1$ by $[X_0,X_1]_{\theta}$ for $\theta \in (0,1)$, it is well-known that
\begin{equation*}
\big[ W^{t_0,p_0}(\bbR^d;\bbR^d),W^{t_1,p_1}(\bbR^d;\bbR^d) \big]_{\theta} = W^{t,p}(\bbR^d;\bbR^d)
\end{equation*}
for $t_0$, $t_1 \in (0,1)$ and $p_0$, $p_1 \in (1,\infty)$ with $t$ and $p$ given by
\begin{equation*}
\frac{1}{p} = \frac{1-\theta}{p_0} + \frac{\theta}{p_1}\,, \qquad t = (1-\theta)t_0 + \theta t_1\,.
\end{equation*}
Now consider the spaces $W^{t,p}(\bbR^d;\bbR^d)$ as points in the $(t,1/p)$-plane. Choose $(t_0,{1\over p_0})$, $(t_1,1/p_1)$ so that $(s,1/2)$ lies in the line segment joining them, i.e. so that there exists $\theta^* \in (0,1)$ such that
\begin{equation*}
\big[ W^{t_0,p_0}(\bbR^d;\bbR^d),W^{t_1,p_1}(\bbR^d;\bbR^d) \big]_{\theta^*} = W^{s,2}(\bbR^d;\bbR^d)\,.
\end{equation*}
By definition of the (anti)dual exponents $t_0'$, $t_1'$, $p_0'$, $p_1'$,
\begin{equation*}
\Big[ [W^{t_0',p_0'}(\bbR^d;\bbR^d)]^*,[W^{t_1',p_1'}(\bbR^d;\bbR^d)]^* \Big]_{\theta^*} = [W^{s,2}(\bbR^d;\bbR^d)]^*
\end{equation*}
for the same $\theta^*$. Note also that the spaces $W^{t_0',p_0'}(\bbR^d;\bbR^d)$ and $W^{t_1',p_1'}(\bbR^d;\bbR^d)$ lie on the same line segment defined above.
Thus, Steps I and II in tandem state that $\varpi \bbI_d + \bbL$ is a bounded linear operator from the complex interpolation scale $\big[ W^{t_0,p_0}(\bbR^d;\bbR^d),W^{t_1,p_1}(\bbR^d;\bbR^d) \big]_{\theta}$ to $\Big[ [W^{t_0',p_0'}(\bbR^d;\bbR^d)]^*,[W^{t_1',p_1'}(\bbR^d;\bbR^d)]^* \Big]_{\theta^*}$ for any  $\theta \in (0,1)$, and it is invertible for some $\theta^* \in (0,1)$.
We are now in a position to invoke the quantitative Schneiberg lemma, which states that invertibility at the interior point $(s,1/2)$ of this line segment implies invertibility on some open interval of the line segment containing the point $(s,1/2)$. The width of the line segment depends on the coercivity bound obtained in Step I and the continuity bounds in Step II. The inverses obtained coincide with the inverse obtained at the point $(s,1/2)$ on their common domains of definition. Finally, on every line segment through $(s,1/2)$ we can choose an open interval of the same width containing the point, and they sum up to a two-dimensional $\veps$-neighborhood of $(s,1/2)$ in the $(t,1/p)$-plane, as desired.
\end{proof}

\begin{proof}[Proof of \autoref{Meyers-type-first step} when $\mathfrak{h} = \infty$] Let  ${\bf u}\in W^{s,2}(\mathbb{R}^{d};\mathbb{R}^{d})$ be a weak solution in the sense of \eqref{defnweaksoln} corresponding to 
 ${\bf F} \in L^{2_{*_{s}}  + \delta_0}(\mathbb{R}^{d}; \mathbb{R}^{d}) \cap L^{2_{*_{s}}}(\mathbb{R}^{d}; \mathbb{R}^{d})$.  Let $\veps > 0$ be as in Lemma \ref{lma:InvertiblityofI+L}, and let $p$ satisfy \eqref{ExponentConstraints}. Define $\textstyle t = \frac{d}{2} - \frac{d}{p} + s$, and define $t'$, $p'$ as in Lemma \ref{lma:InvertiblityofI+L}.  We prove the theorem by showing that the solution ${\bf u}$ lives in the space ${\bf u}\in W^{t, p}(\mathbb{R}^{d};\mathbb{R}^{d})$ with appropriate bound, and that the function 
$\Vnorm{\Upsilon^s(\bu)}_{L^p(\bbR^d)} \leq C \Vnorm{\bu}_{W^{t,p}(\bbR^d)}\,,$
for some universal constant $C$.  The latter is proved for scalar functions in \cite{Auscher} and its extension to vector fields follows easily. 
To demonstrate the former we will use Lemma \ref{lma:InvertiblityofI+L}  and prove the estimate 
\begin{equation}\label{eq:HigherDiffBound}
\Vnorm{\bu}_{W^{t,p}(\bbR^d)} \leq C \Big( \Vnorm{\bF}_{L^{2_{*_s}+\delta_0}(\bbR^d)} + \Vnorm{\bF}_{L^{2_{*_s}}(\bbR^d)} + \Vnorm{\bu}_{W^{s,2}(\bbR^d)} \Big)\,.
\end{equation}
To that end, since ${\bf u}$ is a weak solution to the coupled system in the sense of \eqref{defnweaksoln}, we write \eqref{maineqn} in the form
\begin{equation}\label{eq:I+LWithData}
\left( \varpi \bbI_d + \bbL \right) \bu = \bF + \varpi \bu\,.
\end{equation}
in the dual space where $\varpi$ is as in Lemma  \ref{lma:InvertiblityofI+L} . 
Note that $t-s \in (0,\veps)$ by choice of $t$ and by  \eqref{ExponentConstraints}. 
Thus by Lemma \ref{lma:InvertiblityofI+L}, the operator $\textstyle \varpi \bbI_d + \bbL : W^{t,p}(\bbR^d;\bbR^d) \to [W^{t',p'}(\bbR^d;\bbR^d)]^*$ is invertible, and by \eqref{eq:I+LWithData}
\begin{equation}\label{eq:HigherDiffBound1}
\Vnorm{\bu}_{W^{t,p}(\bbR^d)} \leq C \Vnorm{\bF + \varpi\bu}_{[W^{t',p'}(\bbR^d)]^*}\,,
\end{equation}
provided the right-hand side is finite. We now show that this is in fact the case. 
Note that $t'p' < 2s < 2 \leq d$, and therefore by the fractional Sobolev embedding theorem and by choice of $t$,
\[
\Vnorm{\bF}_{W^{t',p'}(\bbR^d)]^*} \leq C \Vnorm{\bF}_{L^r(\bbR^d)}\,, \qquad r = \frac{dp}{(2-\frac{p}{2})d + sp}\,.
\]
The exponent $r$ satisfies $r > 2_{*_s}$, with $r \to 2_{*_s}$ as $p \to 2$. Hence, for $p$ satisfying \eqref{ExponentConstraints} we have $r \in (2_{*_s},2_{*_s}+\delta_0)$. By the log-convexity of $L^p$ norms there exists $\beta \in (0,1)$ such that $\Vnorm{\bF}_{L^r(\bbR^d)} \leq \Vnorm{\bF}_{L^{2_{*_s}+\delta_0}(\bbR^d)}^{\beta} \Vnorm{\bF}^{1-\beta}_{L^{2_{*_s}}(\bbR^d)}$, and using the inequality $a^{\beta}b^{1-\beta} \leq a+b$,
\begin{equation*}
\Vnorm{\bF}_{[W^{t',p'}(\bbR^d)]^*} \leq C \Big( \Vnorm{\bF}_{L^{2_{*_s}+\delta_0}(\bbR^d)} + \Vnorm{\bF}_{L^{2_{*_s}}(\bbR^d)} \Big)\,.
\end{equation*}
To control the $(W^{t',p'})^*$ norm of $\bu$ we follow a similar program. Treating $\bu$ as a distribution acting on functions in $W^{t',p'}(\bbR^d;\bbR^d)$ via the $L^2$ inner product, by H\"older's inequality we easily have $\Vnorm{\bu}_{[W^{t',p'}(\bbR^d)]^*} \leq \Vnorm{\bu}_{L^p(\bbR^d)}$. Since $p \in [2,2^{*_s})$ we have by log-convexity of $L^p$ norms
\begin{equation*}
\Vnorm{\bu}_{[W^{t',p'}(\bbR^d)]^*} \leq C \Vnorm{\bu}_{L^p(\bbR^d)} \leq C \Big( \Vnorm{\bu}_{L^2(\bbR^d)} + \Vnorm{\bu}_{L^{2^{*_s}}(\bbR^d)} \Big) \leq C \Vnorm{\bu}_{W^{s,2}(\bbR^d)}\,,
\end{equation*}
where the last inequality follows from Sobolev embedding. Combining the two estimates for $\bF$ and $\bu$ gives
\begin{equation}\label{eq:HigherDiffBound2}
\Vnorm{\bF + \varpi\bu}_{[W^{t',p'}(\bbR^d)]^*} \leq C \Big( \Vnorm{\bF}_{L^{2_{*_s}+\delta_0}(\bbR^d)} + \Vnorm{\bF}_{L^{2_{*_s}}(\bbR^d)} + \Vnorm{\bu}_{W^{s,2}(\bbR^d)} \Big)\,.
\end{equation}
Thus, the bound \eqref{eq:HigherDiffBound} follows from \eqref{eq:HigherDiffBound1} and \eqref{eq:HigherDiffBound2}. Putting together these inequalities we obtain that  
\begin{equation}
\Vnorm{\Upsilon^s(\bu)}_{L^p(\bbR^d)} \leq \Vnorm{\bu}_{W^{t,p}(\bbR^d)} \leq C \Big( \Vnorm{\bF}_{L^{2_{*_s}+\delta_0}(\bbR^d)} + \Vnorm{\bF}_{L^{2_{*_s}}(\bbR^d)} + \Vnorm{\bu}_{W^{s,2}(\bbR^d)} \Big)\,,
\end{equation}
as desired.
\end{proof}

\begin{remark}
One notices that in the course of the proof of Theorem \ref{Meyers-type-first step} we actually proved that weak solutions $\bu$ of the coupled nonlocal system satisfy {\em both higher integrability and higher differentiability} on a Sobolev scale. This self-improvement of solutions is of independent interest that has been studied in \cite{Auscher,Kuusi-M-S} for nonlocal elliptic scalar equations. We restrict our attention to regularity of solutions in the scale of the Bessel potential spaces.
\end{remark}

\subsection{Higher integrability when the horizon is finite}
We now address the remaining case of the proof of \autoref{Meyers-type-first step} where the horizon $0< \mathfrak{h} < \infty$, and $c_{\mathfrak{h}} \in (0, \infty)$.  We begin with the following simple observation relating the bilinear forms $\mathcal{E}_{\mathfrak{h}}$ and $\mathcal{E}$. 
\begin{proposition}\label{prop:finite-to-infinite-horizon}
For $p \in [1,\infty)$ there exists a bounded linear operator $\cP_{\mathfrak{h}}$  from $L^p(\mathbb{R}^{d};\mathbb{R}^{d})$ to itself 
such that for any $\bu, {\bv} \in W^{s, 2}(\bbR^d,\bbR^d)$, we have 
\[
\mathcal{E}_{\mathfrak{h}}(\bu, {\bv})  = c_{\mathfrak{h}} \,\mathcal{E}(\bu, {\bv}) + \int_{\bbR^{d}} \langle \cP_{\mathfrak{h}}\bu(\bdx), {\bv}(\bdx) \rangle \, \mathrm{d}\bdx.
\]
\end{proposition}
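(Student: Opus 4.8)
The plan is to realize the difference $\mathcal{E}_{\mathfrak{h}}(\bu,\bv) - c_{\mathfrak{h}}\,\mathcal{E}(\bu,\bv)$ as an integral supported on the far-field set $\{|\bdx-\bdy|>\mathfrak{h}\}$, where the kernel $|\bdx-\bdy|^{-(d+2s)}$ is integrable, and then to recognize that integral as a pairing $\intdm{\bbR^{d}}{\Vint{\cP_{\mathfrak{h}}\bu(\bdx),\bv(\bdx)}}{\bdx}$ against a matrix-kernel operator $\cP_{\mathfrak{h}}$ that Schur's test shows is bounded on every $L^{p}$. Concretely, directly from the definitions of $\mathcal{E}_{\mathfrak{h}}$ and $\mathcal{E}$ (recalling $c_{\infty}=1$) one has the elementary identity
\[
\mathcal{E}_{\mathfrak{h}}(\bu,\bv) - c_{\mathfrak{h}}\,\mathcal{E}(\bu,\bv) = -\frac{c_{\mathfrak{h}}}{2}\intdm{\bbR^{d}}{\intdm{\bbR^{d}\setminus B_{\mathfrak{h}}(\bdx)}{\frac{A(\bdx,\bdy)}{|\bdx-\bdy|^{d+2s}}\diffqbu\diffqbv}{\bdy}}{\bdx}\,.
\]
First I would check that this double integral converges absolutely when $\bu,\bv\in W^{s,2}(\bbR^{d};\bbR^{d})$: bounding $A\leq\alpha_{2}$, using $\diffqbunorm\leq|\bu(\bdx)|+|\bu(\bdy)|$ and the analogue for $\bv$, expanding the product, and invoking $\Lambda_{\mathfrak{h}}:=\intdm{\{|\bdy|>\mathfrak{h}\}}{|\bdy|^{-(d+2s)}}{\bdy}<\infty$ together with the Cauchy--Schwarz inequality and translation invariance of Lebesgue measure, each resulting term is bounded by $\alpha_{2}\Lambda_{\mathfrak{h}}\Vnorm{\bu}_{L^{2}(\bbR^d)}\Vnorm{\bv}_{L^{2}(\bbR^d)}$ after relabeling $\bdx\leftrightarrow\bdy$ where needed. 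This absolute convergence is exactly what licenses the use of Fubini's theorem below.

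Next I would expand the integrand via the shape tensor, writing $\diffqbu\diffqbv = \shapetensorbxy\big(\bu(\bdx)-\bu(\bdy)\big)\cdot\big(\bv(\bdx)-\bv(\bdy)\big)$, and introduce the matrix-valued kernel $\mathcal{K}(\bdx,\bdy):=\frac{A(\bdx,\bdy)}{|\bdx-\bdy|^{d+2s}}\mathbf{1}_{\{|\bdx-\bdy|>\mathfrak{h}\}}\shapetensorbxy$, which is symmetric, satisfies $\mathcal{K}(\bdx,\bdy)=\mathcal{K}(\bdy,\bdx)$ (by the symmetry of $A$), and obeys $\|\mathcal{K}(\bdx,\bdy)\|\leq\alpha_{2}|\bdx-\bdy|^{-(d+2s)}\mathbf{1}_{\{|\bdx-\bdy|>\mathfrak{h}\}}$. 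Distributing the two differences yields four terms: the two ``diagonal'' terms $[\mathcal{K}(\bdx,\bdy)\bu(\bdx)]\cdot\bv(\bdx)$ and $[\mathcal{K}(\bdx,\bdy)\bu(\bdy)]\cdot\bv(\bdy)$ are interchanged by the substitution $\bdx\leftrightarrow\bdy$ (using the symmetry of $\mathcal{K}$), and so are the two ``off-diagonal'' terms $[\mathcal{K}(\bdx,\bdy)\bu(\bdx)]\cdot\bv(\bdy)$ and $[\mathcal{K}(\bdx,\bdy)\bu(\bdy)]\cdot\bv(\bdx)$. Hence, after Fubini, the four-term sum collapses to twice (one diagonal term minus one off-diagonal term), and one arrives at
\[
\mathcal{E}_{\mathfrak{h}}(\bu,\bv) - c_{\mathfrak{h}}\,\mathcal{E}(\bu,\bv) = \intdm{\bbR^{d}}{\Vint{\cP_{\mathfrak{h}}\bu(\bdx),\bv(\bdx)}}{\bdx}\,,\qquad \cP_{\mathfrak{h}}\bu(\bdx):=c_{\mathfrak{h}}\intdm{\{|\bdx-\bdy|>\mathfrak{h}\}}{\mathcal{K}(\bdx,\bdy)\big(\bu(\bdy)-\bu(\bdx)\big)}{\bdy}\,,
\]
using symmetry of $\mathcal{K}$ once more to place the operator in the first slot.

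Finally I would verify that $\cP_{\mathfrak{h}}$ maps $L^{p}(\bbR^{d};\bbR^{d})$ boundedly to itself for every $p\in[1,\infty)$. Splitting $\cP_{\mathfrak{h}}=c_{\mathfrak{h}}(T-Q)$, where $T\bu(\bdx)=\intdm{\bbR^{d}}{\mathcal{K}(\bdx,\bdy)\bu(\bdy)}{\bdy}$ and $Q\bu(\bdx)=\big(\intdm{\bbR^{d}}{\mathcal{K}(\bdx,\bdy)}{\bdy}\big)\bu(\bdx)$: the matrix field defining $Q$ has operator norm at most $\alpha_{2}\Lambda_{\mathfrak{h}}$ pointwise, so $\Vnorm{Q\bu}_{L^{p}(\bbR^d)}\leq\alpha_{2}\Lambda_{\mathfrak{h}}\Vnorm{\bu}_{L^{p}(\bbR^d)}$ for every $p$; and the two Schur bounds $\sup_{\bdx}\intdm{\bbR^{d}}{\|\mathcal{K}(\bdx,\bdy)\|}{\bdy}=\sup_{\bdy}\intdm{\bbR^{d}}{\|\mathcal{K}(\bdx,\bdy)\|}{\bdx}\leq\alpha_{2}\Lambda_{\mathfrak{h}}$ (the equality by symmetry of $\mathcal{K}$) are precisely the hypotheses of Schur's test, yielding $\Vnorm{T\bu}_{L^{p}(\bbR^d)}\leq\alpha_{2}\Lambda_{\mathfrak{h}}\Vnorm{\bu}_{L^{p}(\bbR^d)}$ for all $p\in[1,\infty]$ (one interpolates the trivial $p=1$ and $p=\infty$ estimates, applying the scalar argument to $|\bu|$ in the vector-valued case). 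I do not expect a genuine obstacle --- the proposition is a bookkeeping computation; the only two points requiring care are the justification of Fubini, handled by the absolute-convergence estimate of the first step, and keeping track of which of the four expanded terms ends up in the $\bu$-slot versus the $\bv$-slot after the $\bdx\leftrightarrow\bdy$ symmetrization, both of which are routine once the off-diagonal integrability $\Lambda_{\mathfrak{h}}<\infty$ is in hand.
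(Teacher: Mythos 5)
Your proof is correct and mirrors the paper's argument step by step: isolate the far-field contribution on $\{|\bdx-\bdy|>\mathfrak{h}\}$, symmetrize under $\bdx\leftrightarrow\bdy$ and apply Fubini (with the absolute-convergence check you supply) to collapse the expanded terms to the pairing $\int\langle\cP_{\mathfrak{h}}\bu,\bv\rangle\,\mathrm{d}\bdx$, and then split $\cP_{\mathfrak{h}}$ into a pointwise multiplication by the bounded matrix field $\int\mathcal{K}(\cdot,\bdy)\,\mathrm{d}\bdy$ plus an integral operator with an integrable off-diagonal kernel. The only cosmetic difference is in the last step: the paper controls the integral-operator piece by Young's inequality after dominating the kernel's norm by the $L^{1}$ radial profile $c_{\mathfrak{h}}\alpha_{2}\,|\bfxi|^{-(d+2s)}$ restricted to $|\bfxi|>\mathfrak{h}$, whereas you invoke Schur's test; since the dominating bound is translation invariant the two are equivalent here.
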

\begin{proof}
We begin by writing that 
\begin{align*}
\mathcal{E}_{\mathfrak{h}}(\bu, {\bv}) &= c_{\mathfrak{h}} \,\mathcal{E}(\bu, {\bv})\\
 &+ {1\over 2} \int_{\bbR^{d}}\int_{\bbR^{d}}k(\bx, \by)({\bf u}(\bdx)-{\bf u}(\bdy))\cdot {(\bdx-\bdy) \over |\bdx-\bdy|} ({\bf v}(\bdx)-{\bf v}(\bdy))\cdot {(\bdx-\bdy) \over |\bdx-\bdy|} \, \mathrm{d}\bx \, \mathrm{d}\by, 
\end{align*}
where $\textstyle k(\bx, \by)= c_{\mathfrak{h}} \big( 1- \chi_{B_{\mathfrak{h}} ({\bfs 0})}(|\bx-\by|) \Big) {A(\bx, \by) \over |\bx - \by|^{d + 2s}}$ is symmetric.   Notice also that $k$ is bounded and has no singularity along the diagonal $\bx=\by$.  Moreover,  it decays fast enough at infinity that for a fixed $\bx,$ the function $k(\bx, \by)$  is integrable in $\by$. We may then apply Fubini's theorem, iterate the integrals and get 
\begin{align*}
&{1\over 2} \int_{\bbR^{d}}\int_{\bbR^{d}}k(\bx, \by)({\bf u}(\bdx)-{\bf u}(\bdy))\cdot {(\bdx-\bdy) \over |\bdx-\bdy|} ({\bf v}(\bdx)-{\bf v}(\bdy))\cdot {(\bdx-\bdy) \over |\bdx-\bdy|} \, \mathrm{d}\bx \, \mathrm{d}\by \\
&= \int_{\bbR^{d}}\left(-\int_{\bbR^{d}}k(\bx, \by) {(\bdx-\bdy)\otimes (\bx-\by) \over |\bdx-\bdy|^{2}} ({\bf u}(\bdx)-{\bf u}(\bdy)) \, \mathrm{d}\by\right)  {\bf v}(\bdx)\, \mathrm{d}\bx. 
\end{align*}
We now define the operator $\cP_{\mathfrak{h}}: L^p(\bbR^d;\bbR^d) \to L^p(\bbR^d;\bbR^d)$  as
\[
\cP_{\mathfrak{h}} \bu (\bdx) = -\int_{\bbR^{d}}k(\bx, \by) {(\bdx-\bdy)\otimes (\bx-\by) \over |\bdx-\bdy|^{2}} ({\bf u}(\bdx)-{\bf u}(\bdy)) \, \mathrm{d}\by, \quad \bdx\in \bbR^d.
\]
It is clear that the operator is linear. To show that it is bounded, we notice that 
\[
\cP_{\mathfrak{h}} \bu (\bdx)  = \int_{\bbR^{d}}k(\bx, \by) {(\bdx-\bdy)\otimes (\bx-\by) \over |\bdx-\bdy|^{2}} {\bf u}(\bdy) \, \mathrm{d}\by - \bbK(\bx) {\bf u}(\bx)
\]
where easy estimates show that {$\textstyle \bbK(\bx)  = \int_{\bbR^{d}}k(\bx, \by) {(\bdx-\bdy)\otimes (\bx-\by) \over |\bdx-\bdy|^{2}} \, \mathrm{d} \by$} is a uniformly bounded matrix-valued function. The first term, on the other hand, is a convolution type operator  and its magnitude is bounded from above by the function 
\[
\alpha_2 \gamma \ast |\bu|(\bdx),\quad \text{where}\quad  \gamma({\bfs \xi}) = c_{\mathfrak{h}} \Big(1- \chi_{B_{\mathfrak{h}} ({\bfs 0})}(|{\bfs \xi}|) \Big) {1\over |{\bfs \xi}|^{d + 2s}} \in L^{1}(\mathbb{R}^{d}). 
\]
The boundedness of the operator $\cP_{\mathfrak{h}}$ on $L^{p}(\bbR^d;\bbR^d)$ now follows from Young's inequality. 
\end{proof}

Note that the same identity is true for the corresponding operators, i.e. $\bbL_{\mathfrak{h}} = c_{\mathfrak{h}} \bbL + \cP_{\mathfrak{h}}$.

\begin{remark}
It is now an easy corollary of the above proposition and Theorem \ref{theorem-korn} to state that for $0 < \mathfrak{h} \leq \infty$, 
${\bf u}\in L^{2}(\bbR^d;\bbR^{d})$,  $\mathcal{E}_{\mathfrak{h}}(\bu, {\bu}) < \infty $ if and only if $\bu \in W^{s, 2}(\bbR^d;\bbR^{d})$. 
\end{remark}

\begin{proof}[Proof of \autoref{Meyers-type-first step} when $0 < \mathfrak{h} < \infty$]
We begin with a solution $\bu$ to the system of nonlocal equations in the sense of \eqref{defnweaksoln}  corresponding to $\bF \in L^{2_{\ast_{s}} + \delta_{0}}(\bbR^d, \bbR^{d}) \cap L^{2_{\ast_{s}}}(\bbR^d, \bbR^{d})$. Using Proposition \ref{prop:finite-to-infinite-horizon}, we can conclude that $\bu$ satisfies 
\[
\mathcal{E}(\bu, {\bv}) = {1 \over c_{\mathfrak{h}}} \left(\mathcal{E}_{\mathfrak{h}}(\bu, {\bv}) - \int_{\bbR^{d}} \langle \cP_{\mathfrak{h}}\bu(\bdx), {\bv}(\bdx) \rangle d\bdx\right) 
= \langle \bF_{\mathfrak{h}}, \bv\rangle
\]
for any $\bv \in W^{s, 2}(\bbR^d,\bbR^d)$, where we have defined $\bF_{\mathfrak{h}} = {1\over c_{\mathfrak{h}} } \left(\bF + \mathcal{P}_{\mathfrak{h} }\bu \right)$.  That is, $\bu$ solves a nonlocal system corresponding to infinite horizon in the sense of \eqref{defnweaksoln} with modified right-hand side data $ \bF_{\mathfrak{h}}$.
We can therefore apply all of the arguments in the previous subsection so long as $\cP_{\mathfrak{h}}(\bu)$ belongs to the space $[W^{t',p'}(\bbR^d;\bbR^d)]^*$.  Since $p \in (2,2^{\ast_{s}})$ and ${\bu}\in L^{2^{\ast_{s}}}$ from the fractional Sobolev embedding theorem, it follows from Proposition \ref{prop:finite-to-infinite-horizon}  that $\cP_{\mathfrak{h}}(\bu) \in L^{p}(\mathbb{R}^{d};\bbR^d)$. Moreover, by our choice of $t$ and $p$, we have the estimate
\begin{align*}
\Vnorm{\cP_{\mathfrak{h}}(\bu)}_{[W^{t',p'}(\bbR^d)]^*} &\leq C \Vnorm{\cP_{\mathfrak{h}}(\bu)}_{L^p(\bbR^d)} \\
&\leq C \Big( \Vnorm{\cP_{\mathfrak{h}}(\bu)}_{L^2(\bbR^d)} + \Vnorm{\cP_{\mathfrak{h}}(\bu)}_{L^{2^{*_s}}(\bbR^d)} \Big) \\
&\leq C \left(\|\bu\|_{L^{2}(\bbR^d)} + \|\bu\|_{L^{2^{*_s}}(\bbR^d)}\right)\\
& \leq C\|\bu\|_{W^{s,2}(\bbR^d)}\,.
\end{align*}
We can now apply the higher integrability result in case of infinite horizon to conclude that $\Upsilon^s(\bu) \in L^{p}(\mathbb{R}^{d})$  with the appropriate estimates. That concludes the proof. 
\end{proof}

\section{Characterization of Potential Spaces}\label{Characterization-Section}
The main objective of this section is to prove \autoref{Character-Potential}. 
Our proof of  \autoref{Character-Potential} follows the steps presented in the proof of \cite[Theorem 1]{Stein-Bessel}.  We first develop necessary technical tools that allows us to relate the Marcinkiewicz-type integral $D^{s}({\bff})$ with the potential function of $\bff$.  
\subsection{Poisson-type kernel and integral}
We recall the standard Poisson kernel $p_t(\by)$ and introduce the modified Poisson-type kernel $\bbP_t(\by)$ given by their Fourier transforms, respectively, 
\[
\widehat{p}_t(\bfxi) = \e^{-2 \pi |\bfxi| t}\,, \qquad \widehat{\bbP}_t(\bfxi) = \e^{-2 \pi |\bfxi| t} \left( \bbI_{d+1} + (2 \pi |\bfxi| t)
	\begin{bmatrix}
		- \frac{\bfxi \otimes \bfxi}{|\bfxi|^2} & -\imath \frac{\bfxi}{|\bfxi|} \\
		-\imath \frac{\bfxi}{|\bfxi|} & 1 \\
	\end{bmatrix}
	 \right)\,.
\]
Notice that $\bbP_t(\by)$ is a $(d+1)\times (d+1)$ matrix of functions which is  explicitly given by the formula, see \cite{Scott-Mengesha}
\begin{equation}\label{explicit-for-MatrixP}
\bbP_t(\bx) = \frac{2(d+1)}{\omega_d} \frac{t}{(|\bx|^2 + t^2)^{\frac{d+3}{2}}}
\begin{bmatrix}
\bx \otimes \bx & t\bdx\\
t\bdx & t^{2}
\end{bmatrix}
\end{equation}
where $\bdx$ is considered both a column and row $d$-vector.  Several properties of the matrix kernel $\bbP_t$ are given in \cite{Scott-Mengesha}. We list now the properties that we need. First, the matrix kernel {$\mathbb{P}_{t}$} is in fact an approximation to the identity. 
For any $t>0$, if $\mathbb{I}_{d+1}$ denotes the $(d+1)\times (d+1)$  identity matrix, then  
\begin{equation}\label{Approx-identity}
\int_{\bbR^d}{\mathbb{P}_{t}(\bx)}{\, \mathrm{d}\bx} =\int_{\bbR^d}{\mathbb{P}(\bx)}{\, \mathrm{d}\bx}  = 
  \bbI_{d+1}\,.
  \end{equation}
Moreover, for each $j$, $k$, and $\ell \in \{ 1, \ldots, d, d+1 \}$ and for every $t > 0$ we have that $\p_t \mathfrak{p}^{jk}_t(\bx) \in L^1(\bbR^d)$ and $\p_{x_\ell} \mathfrak{p}^{jk}_t(\bx) \in L^1(\bbR^d)$.  We also have the following pointwise estimates: there exists a constant $c = c(d) > 0$ such that for any $j, k = 1, 2, \dots d+1$,   
\[
|\p_t \mathfrak{p}^{jk}_t(\bx)| \leq c \,|\bx|^{-d-1},\quad |\p_t \mathfrak{p}^{jk}_t(\bx)| \leq c\, t^{-d-1},\quad \forall \, \bx \in \bbR^{d},\quad t > 0. 
\]
In addition, one can easily establish  
$
|\p_{tt} \bbP_t(\bx)| \leq \frac{C}{t^{d+2}}\,$ and $ |\p_{tt} \bbP_t(\bx)| \leq \frac{C}{|\bx|^{d+2}}\,.
$ 

Throughout, functions $\bff : \bbR^d \to \bbR^{d+1}$ are of the form $\bff = (f_1, f_2, \ldots, f_d, 0)$. We treat $\bff$ both as being a vector field in $\bbR^d$ and $\bbR^{d+1}$, as well as column and row vectors; it will be clear from context.

We recall the Poisson integral of $\bff$ given by $\bu^{\Delta}(\bx,t) := p_t \, \ast \, \bff(\bx)$, where the convolution is component-wise.  A Poisson-type integral of $\bff$ can now be naturally defined using the Poisson-type kernel $\bbP_{t} = (\mathfrak{p}_t^{ij})$  as 
\[
\bU(\bx,t) := \bbP_t \, \ast \bff(\bx)\,.
\]
The convolution in the above is taken in the sense of matrix multiplication. That is, the $i^{th}$ entry component of $\bU$ is given by $U_i = \sum_{j=1}^{d + 1} \mathfrak{p}_t^{ij}\ast f_{j}$, where $\bbP_t = (\mathfrak{p}_t^{ij}).$  
Notice that taking the Fourier transform in $\bdx$ transforms the convolution into the matrix multiplication 
\begin{equation}\label{Fourier-of-U}
\widehat{{\bf U}}({\bfs \xi},t) = \widehat{\bbP}_t(\bfxi) \widehat{{\bf f}}(\bfxi)\,.
\end{equation}

The key connection of $\bbP_t$ with $D^{s}({\bff })$ is obtained through the following important relation that we will be using below. 
For any $\bz, \bx\in \bbR^{d}$, we have  
\[
\bbP_t(\bx)\left(\begin{bmatrix} \bz\\0\end{bmatrix} \right) =  \overline{\bP}(\bx, t)  \left(\bz \cdot \frac{\bx}{|\bx|}\right) \,,\qquad \bz \in \bbR^d\,,
\]
where the vector function $\overline{\bP}(\bx, t)$ is given by 
\begin{equation}\label{defn-vector-P}
\overline{\bP}(\bx, t):= \frac{2(d+1)}{\omega_d} \frac{t|\bdx|}{(|\bx|^2 + t^2)^{\frac{d+3}{2}}} \left(\begin{bmatrix} \bx\\t\end{bmatrix} \right)\,.
\end{equation}
In particular,  $\overline{\bP}(\bx, t)$ and all its derivatives in $t$ satisfy the same estimates as $\bbP_t(\bx)$ and corresponding derivatives.
Using this relation and \eqref{Approx-identity}, we see that 
\begin{align*}
\bU(\bx,t)  &= \bff(\bdx)  + \int_{\mathbb{R}^{d}} \bbP_t (\bdy)(\bff(\bdx + \bdy) - \bff(\bdx)) \, \mathrm{d}\bdy\\
& =  \bff(\bdx)  + \int_{\mathbb{R}^{d}} \overline{\bP}(\bdy, t)(\bff(\bdx + \bdy) - \bff(\bdx))\cdot {\bdy\over |\bdy|} \, \mathrm{d}\bdy\,.
\end{align*}
\subsection{Littlewood-Paley-type  $g$-function}
We can define the analogue of  the classical Littlewood-Paley $g$-function corresponding to the new Poisson-type integral  $\bU(\bx,t)$. The following definition is natural:
\begin{equation}
\mathring{\mathfrak{g}}_1(\bff)(\bx) := \left( \intdmt{0}{\infty}{t \left| \p_t \bU(\bx,t) \right|^2}{t} \right)^{1\over 2}\,.
\end{equation}
In the definition above $\nabla = (\nabla_{\bdx}, \partial_{t})$, and $ \left| \grad \bU(\bx,t) \right|^2 = |\partial_{t}\bU|^{2} + \sum_{k=1}^{d+1}|\nabla_{\bdx} U_{k}|^2$. 
We can use these functions to characterize the $L^{p}$ norm of a vector field. The following is a result similar to  \cite[Theorem 1, Chapter IV]{Stein}. 
\begin{theorem}\label{Character-Lp-norm}
Suppose that $1<p<\infty$. Then there are constants $C_1, C_2$ such that for any ${\bf f}\in L^p(\bbR^d;\bbR^{d})$ 
\[
C_1 \Vnorm{\mathring{\mathfrak{g}}_1(\bff)}_{L^p(\bbR^d)}\leq \Vnorm{\bff}_{L^p(\bbR^d)} \leq C_2 \Vnorm{\mathring{\mathfrak{g}}_1(\bff)}_{L^p(\bbR^d)}
\]
\end{theorem}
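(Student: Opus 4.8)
The plan is to mimic the classical Littlewood–Paley argument for the $g$-function associated to the ordinary Poisson integral, as in \cite[Chapter IV]{Stein}, but working with the vector-valued Poisson-type integral $\bU(\bx,t) = \bbP_t \ast \bff(\bx)$ in place of $u^\Delta$. Since the matrix kernel $\bbP_t$ is an approximation to the identity with the good pointwise bounds on $\p_t \mathfrak{p}^{jk}_t$, $\p_{x_\ell}\mathfrak{p}^{jk}_t$ and $\p_{tt}\bbP_t$ recorded above, the operator $\bff \mapsto \grad \bU$ behaves like a (vector of) standard Littlewood–Paley square-function operators, and both inequalities should follow from Calderón–Zygmund theory together with an $L^2$ identity.

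First I would establish the $L^2$ estimate $\Vnorm{\mathring{\mathfrak{g}}_1(\bff)}_{L^2(\bbR^d)} \simeq \Vnorm{\bff}_{L^2(\bbR^d)}$ directly from the Fourier side. Using \eqref{Fourier-of-U}, $\widehat{\bU}(\bfxi,t) = \widehat{\bbP}_t(\bfxi)\widehat{\bff}(\bfxi)$, and since $\bff$ has vanishing $(d+1)$st component one can use the relation $\bbP_t(\bx)\big(\begin{bmatrix}\bz\\0\end{bmatrix}\big) = \overline{\bP}(\bx,t)(\bz\cdot \tfrac{\bx}{|\bx|})$ to compute $\widehat{\bU}$ explicitly; it is a product of $\e^{-2\pi|\bfxi|t}$, polynomial factors in $(2\pi|\bfxi|t)$, and $\widehat{\bff}(\bfxi)$. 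Differentiating in $t$ and in $\bx$ produces factors that, after the change of variable $\lambda = |\bfxi|t$, integrate against $t\,\mathrm{d}t$ to an absolute constant times $|\widehat{\bff}(\bfxi)|^2$ (the components of $\widehat{\bff}$ with the transversal projector $\tfrac{\bfxi}{|\bfxi|}\cdot$ must be tracked, but no cancellation is lost because the full gradient $\grad = (\grad_\bx,\p_t)$ is taken). Then Plancherel and Fubini give the two-sided $L^2$ bound.

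Next I would upgrade to general $p \in (1,\infty)$. For the inequality $\Vnorm{\mathring{\mathfrak{g}}_1(\bff)}_{L^p} \leq C\Vnorm{\bff}_{L^p}$, observe that $\mathring{\mathfrak{g}}_1$ is a vector-valued singular integral: writing $\grad \bU(\bx,t) = (K_t \ast \bff)(\bx)$ with $K_t = \grad(\bbP_t)$, the pointwise bounds $|\p_t\mathfrak{p}^{jk}_t(\bx)|\leq c|\bx|^{-d-1}$, $|\p_{x_\ell}\mathfrak{p}^{jk}_t(\bx)| \lesssim |\bx|^{-d-1}$ (and the analogous bounds in $t$ and the second $t$-derivative) show that the kernel $\bx \mapsto (t^{1/2}K_t(\bx))_{t>0}$, valued in the Hilbert space $L^2((0,\infty), \mathrm{d}t)$, satisfies the Hörmander regularity condition. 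Combined with the $L^2$ bound from the previous step, the vector-valued Calderón–Zygmund theorem (Benedek–Calderón–Panzone) yields $L^p$ boundedness for $1 < p < \infty$. For the reverse inequality $\Vnorm{\bff}_{L^p} \leq C\Vnorm{\mathring{\mathfrak{g}}_1(\bff)}_{L^p}$, I would use the standard polarization/duality trick: for $\bff \in L^p$ and $\bg \in L^{p'}$ with suitable normalization, write $\langle \bff, \bg\rangle$ (up to a constant and a harmless finite-dimensional correction coming from the non-identity part of $\widehat{\bbP}_t$ at $t=\infty$ versus $t=0$, controlled by \eqref{Approx-identity}) as $\int_{\bbR^d}\int_0^\infty t\, \p_t\bU_{\bff}(\bx,t)\cdot \p_t\bU_{\bg}(\bx,t)\,\mathrm{d}t\,\mathrm{d}\bx$, then bound by Cauchy–Schwarz in $t$ and Hölder in $\bx$ by $\Vnorm{\mathring{\mathfrak{g}}_1(\bff)}_{L^p}\Vnorm{\mathring{\mathfrak{g}}_1(\bg)}_{L^{p'}}$, and finish with the forward bound applied to $\bg$. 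A density argument (Schwartz functions) makes the formal integration by parts in $t$ rigorous, using the decay of $\bU$ and $\p_t\bU$ as $t\to\infty$ and $\bU(\bx,0)=\bff(\bx)$ as $t\to 0$.

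The main obstacle I anticipate is bookkeeping the matrix/transversal structure: unlike the scalar Poisson $g$-function, here $\widehat{\bbP}_t$ carries the projector $\tfrac{\bfxi\otimes\bfxi}{|\bfxi|^2}$ and the off-diagonal imaginary entries, so one must check that after taking the \emph{full} gradient $\grad = (\grad_\bx, \p_t)$ no frequency direction is annihilated — i.e.\ that the scalar multiplier $\int_0^\infty t\,|\text{(symbol of }\grad\widehat{\bbP}_t)(\bfxi)|^2\,\mathrm{d}t$ is bounded above and below by positive constants independent of $\bfxi$. This is where the hypothesis that $\bff$ has zero last component and the identity $\bbP_t\big(\begin{bmatrix}\bz\\0\end{bmatrix}\big)=\overline{\bP}(\bx,t)(\bz\cdot\tfrac{\bx}{|\bx|})$ are essential, since $\overline{\bP}$ satisfies the same estimates as $\bbP_t$ and reduces the computation to a manageable scalar one. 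The Hörmander condition for the Hilbert-space-valued kernel is then routine given the stated pointwise bounds.
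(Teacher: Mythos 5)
Your overall strategy matches the paper's: an $L^2$ identity by Plancherel, the forward $L^p$ bound via Hilbert-space-valued singular integrals in the style of \cite[Chapter II, Section 5]{Stein}, and the reverse bound by polarization and duality. The forward step is essentially Proposition~\ref{thm-GFxnLessThan} and is fine. There is, however, a genuine gap in your treatment of the reverse inequality.

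You claim polarization yields $\langle \bff,\bg\rangle$ ``up to a constant and a harmless finite-dimensional correction coming from the non-identity part of $\widehat{\bbP}_t$ at $t=\infty$ versus $t=0$, controlled by \eqref{Approx-identity}.'' That is not what happens, and the approximate-identity property \eqref{Approx-identity} is irrelevant to this step. Polarizing the $L^2$ identity of Lemma~\ref{norm-of-f-p=2} gives
\[
\int_{0}^{\infty}\!\!\int_{\bbR^d} t\, \Vint{\p_t \bU_1(\bx,t), \p_t \bU_2(\bx,t)}\,\mathrm{d}\bx\,\mathrm{d}t
= \frac{1}{4}\intdm{\bbR^d}{\Vint{\left(\bbI_d + 3\tfrac{\bfxi\otimes\bfxi}{|\bfxi|^2}\right)\widehat{\bff_1}(\bfxi),\ \widehat{\bff_2}(\bfxi)}}{\bfxi}\,,
\]
so the duality argument controls $\Vnorm{L\bff}_{L^p}$, not $\Vnorm{\bff}_{L^p}$, where $L$ is the zero-order Fourier multiplier with matrix symbol $\bbI_d + 3\frac{\bfxi\otimes\bfxi}{|\bfxi|^2}$. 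This is neither a boundary term nor a finite-rank perturbation: it is a nontrivial multiplier of the same order as the identity, and one still needs the separate estimate $\Vnorm{\bff}_{L^p}\leq C\Vnorm{L\bff}_{L^p}$. The paper supplies this in Lemma~\ref{lma-KornsForPotentials} by expressing $L$ via Riesz transforms $R_j$, observing that $\sum_k R_k(L\bff)_k = -2\sum_k R_k f_k$, and then recovering each $f_k$ using $L^p$ boundedness of the Riesz transforms. Your remark that ``the scalar multiplier \ldots is bounded above and below by positive constants'' only certifies $L^2$ invertibility; a two-sided pointwise bound on a matrix symbol does not by itself give $L^p$ invertibility for $p\neq 2$, which must be argued separately (Riesz transforms, as in the paper, or a Mikhlin--H\"ormander argument for the inverse symbol $\bbI_d - \tfrac{3}{4}\tfrac{\bfxi\otimes\bfxi}{|\bfxi|^2}$).

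A smaller point: $\mathring{\mathfrak{g}}_1$ as defined here uses only $\p_t\bU$, not the full gradient $\grad=(\grad_{\bdx},\p_t)$, so your repeated appeals to ``the full gradient'' (and the claim that ``no cancellation is lost because the full gradient is taken'') should be dropped. The matrix/transversal structure is already carried by the symbol of $\p_t\widehat{\bbP}_t$ alone, which is exactly what produces the projector $\tfrac{\bfxi\otimes\bfxi}{|\bfxi|^2}$ in the $L^2$ identity.
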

To prove the theorem we follow the steps and the approach given in \cite{Stein} for the proof of \cite[Theorem 1, Chapter IV]{Stein}.   We first prove the theorem for $p=2$.
\begin{lemma}\label{norm-of-f-p=2}
Let $\bff \in L^2(\bbR^d)$. Then $ \mathring{\mathfrak{g}}_1(\bff) \in L^2(\bbR^d)$ with
\begin{equation}\label{eq-GFxnL2-Equation2}
\Vnorm{\mathring{\mathfrak{g}}_1(\bff)}_{L^2(\bbR^d)}^2 = \frac{1}{4} \intdm{\bbR^d}{\left| \left( \bbI_d + \frac{\bfxi \otimes \bfxi}{|\bfxi|^2} \right) \widehat{\bff}(\bfxi) \right|^2}{\bfxi} \,.
\end{equation}
\end{lemma}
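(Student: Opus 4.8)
The plan is to prove \eqref{eq-GFxnL2-Equation2} by a Plancherel computation. First I would write $\mathring{\mathfrak{g}}_1(\bff)(\bx)^2 = \int_0^\infty t\,|\p_t\bU(\bx,t)|^2\,\mathrm{d}t$, integrate in $\bx$, and interchange the $\bx$- and $t$-integrations by Tonelli's theorem (the integrand is nonnegative). For each fixed $t$, applying the Plancherel theorem in $\bx$ to $\p_t\bU(\cdot,t)$ and using \eqref{Fourier-of-U} gives $\int_{\bbR^d}|\p_t\bU(\bx,t)|^2\,\mathrm{d}\bx = \int_{\bbR^d}|\p_t\widehat{\bbP}_t(\bfxi)\,\widehat{\bff}(\bfxi)|^2\,\mathrm{d}\bfxi$, where differentiation in $t$ passes under the Fourier transform by the $L^1$-in-$t$ decay properties of $\p_t\mathfrak{p}^{jk}_t$ listed above.

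The core of the argument is then the explicit computation of $\p_t\widehat{\bbP}_t(\bfxi)$ and, more precisely, of $\p_t\widehat{\bbP}_t(\bfxi)\,\widehat{\bff}(\bfxi)$ for $\bff = (f_1,\ldots,f_d,0)$. Since $\widehat{\bbP}_t(\bfxi) = \e^{-2\pi|\bfxi|t}\big(\bbI_{d+1} + 2\pi|\bfxi|t\, M(\bfxi)\big)$ with $M(\bfxi)$ the fixed (in $t$) matrix $\begin{bmatrix} -\frac{\bfxi\otimes\bfxi}{|\bfxi|^2} & -\imath\frac{\bfxi}{|\bfxi|} \\ -\imath\frac{\bfxi}{|\bfxi|} & 1\end{bmatrix}$, one computes $\p_t\widehat{\bbP}_t(\bfxi) = \e^{-2\pi|\bfxi|t}\big(-2\pi|\bfxi|\,\bbI_{d+1} + 2\pi|\bfxi|\,M(\bfxi) - (2\pi|\bfxi|)^2 t\, M(\bfxi)\big)$. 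Because $\bff$ has vanishing last component, only the first $d$ columns of these matrices act; acting on $\widehat{\bff}$, the relevant block of $M(\bfxi)$ contributes $-\frac{\bfxi\otimes\bfxi}{|\bfxi|^2}\widehat{\bff}$ in the first $d$ entries and $-\imath\frac{\bfxi}{|\bfxi|}\cdot\widehat{\bff}$ in the last entry. The key algebraic simplification is that $M(\bfxi)$ restricted to the relevant subspace is idempotent-like: $\big(\bbI_{d+1}-M(\bfxi)\big)$ applied to vectors of the form $(\bz,0)$ should act, after taking the $\bbR^{d+1}$ norm, as $\big(\bbI_d + \frac{\bfxi\otimes\bfxi}{|\bfxi|^2}\big)\bz$ on the $\bbR^d$ part (one must check the cross terms between the $\bbR^d$-block and the scalar last entry cancel correctly). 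After collecting terms, $\p_t\widehat{\bbP}_t(\bfxi)\widehat{\bff}(\bfxi) = -2\pi|\bfxi|\e^{-2\pi|\bfxi|t}\big[(\bbI_{d+1}-M(\bfxi))\widehat{\bff} + 2\pi|\bfxi|t\,M(\bfxi)\widehat{\bff}\big]$, and one verifies $\big|(\bbI_{d+1}-M(\bfxi))\widehat{\bff}\big|^2$, $\big|M(\bfxi)\widehat{\bff}\big|^2$ and their inner product all reduce to multiples of $\big|(\bbI_d+\frac{\bfxi\otimes\bfxi}{|\bfxi|^2})\widehat{\bff}(\bfxi)\big|^2$, so the whole squared magnitude equals $(2\pi|\bfxi|)^2\e^{-4\pi|\bfxi|t}\,q(t,|\bfxi|)\,\big|(\bbI_d+\frac{\bfxi\otimes\bfxi}{|\bfxi|^2})\widehat{\bff}(\bfxi)\big|^2$ for an explicit polynomial $q$ in $t|\bfxi|$.

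Finally I would carry out the $t$-integral: $\int_0^\infty t\,(2\pi|\bfxi|)^2\e^{-4\pi|\bfxi|t} q(t,|\bfxi|)\,\mathrm{d}t$ is, after the substitution $\sigma = 4\pi|\bfxi|t$, a constant independent of $\bfxi$ (a finite linear combination of Gamma-function values $\int_0^\infty \sigma^m \e^{-\sigma}\,\mathrm{d}\sigma$), and the normalization works out to exactly $\tfrac14$. Substituting back into the interchanged double integral yields \eqref{eq-GFxnL2-Equation2}, and in particular finiteness of $\Vnorm{\mathring{\mathfrak{g}}_1(\bff)}_{L^2}$ since $\big|(\bbI_d+\frac{\bfxi\otimes\bfxi}{|\bfxi|^2})\widehat{\bff}(\bfxi)\big| \leq 2|\widehat{\bff}(\bfxi)|$ and $\widehat{\bff}\in L^2$ by Plancherel. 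The main obstacle is purely computational: correctly bookkeeping the $(d+1)\times(d+1)$ matrix algebra so that the cross terms between the $d$-dimensional block and the scalar last coordinate combine into the single scalar quantity $\big|(\bbI_d+\frac{\bfxi\otimes\bfxi}{|\bfxi|^2})\widehat{\bff}(\bfxi)\big|^2$, and then matching the Gamma-integral constant to $\tfrac14$; there is no analytic subtlety beyond justifying the Plancherel step and the differentiation under the integral sign, both of which are covered by the stated $L^1$ decay estimates on $\p_t\mathfrak{p}^{jk}_t$.
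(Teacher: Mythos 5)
Your overall strategy -- interchange the $\bx$- and $t$-integrals, apply Plancherel for each fixed $t$, differentiate $\widehat{\bbP}_t$ explicitly, and evaluate the resulting $t$-integral with Gamma-function formulas -- is exactly the strategy the paper takes, differing only in whether the Fourier-side bookkeeping is done in matrix form or componentwise. However, your central algebraic claim is not correct, and a computation carried out as you describe would not close.

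Specifically, you claim that $\bigl|(\bbI_{d+1}-M(\bfxi))\widehat{\bff}\bigr|^2$, $\bigl|M(\bfxi)\widehat{\bff}\bigr|^2$, and their inner product all reduce to scalar multiples of $\bigl|(\bbI_d+\tfrac{\bfxi\otimes\bfxi}{|\bfxi|^2})\widehat{\bff}(\bfxi)\bigr|^2$, so that $\bigl|\p_t\widehat{\bbP}_t\widehat{\bff}\bigr|^2$ factors as $(2\pi|\bfxi|)^2\e^{-4\pi|\bfxi|t}\,q(t|\bfxi|)\,\bigl|(\bbI_d+\tfrac{\bfxi\otimes\bfxi}{|\bfxi|^2})\widehat{\bff}\bigr|^2$. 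This is false. Writing $a := \tfrac{\bfxi}{|\bfxi|}\cdot\widehat{\bff}$, one finds $M\widehat{\bff} = (-a\tfrac{\bfxi}{|\bfxi|},\,-\imath a)^T$ so that $|M\widehat{\bff}|^2 = 2|a|^2$, while $|(\bbI_{d+1}-M)\widehat{\bff}|^2 = |\widehat{\bff}|^2 + 4|a|^2$ and $\operatorname{Re}\langle(\bbI_{d+1}-M)\widehat{\bff},\,M\widehat{\bff}\rangle = -3|a|^2$. By contrast, $\bigl|(\bbI_d+\tfrac{\bfxi\otimes\bfxi}{|\bfxi|^2})\widehat{\bff}\bigr|^2 = |\widehat{\bff}|^2 + 3|a|^2$. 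These quantities are linear combinations of the two \emph{independent} scalars $|\widehat{\bff}(\bfxi)|^2$ and $|a(\bfxi)|^2$ with distinct coefficient pairs $(0,2)$, $(1,4)$, $(0,-3)$, $(1,3)$; none is a multiple of another, and no polynomial $q$ can be factored out. Consequently $\bigl|\p_t\widehat{\bbP}_t\widehat{\bff}\bigr|^2$ has the form $(2\pi|\bfxi|)^2\e^{-4\pi|\bfxi|t}\bigl(|\widehat{\bff}|^2 + r(2\pi|\bfxi|t)\,|a|^2\bigr)$ with a nontrivial quadratic $r$, and the two pieces $|\widehat{\bff}|^2$ and $|a|^2$ must be carried through the $t$-integration with \emph{separate} weights; only after the Gamma-integrals are evaluated do the resulting numerical coefficients combine into the target $\tfrac14\bigl(|\widehat{\bff}|^2 + 3|a|^2\bigr)$. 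This is precisely what the paper's componentwise computation (the sum $\sum_k|A_k|^2$ plus the $(d{+}1)$st component) keeps track of; if you do the algebra as you propose, you should likewise keep the $|\widehat{\bff}|^2$ and $|a|^2$ parts separate, rather than searching for a factorization that does not exist.
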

The proof is tedious but elementary. It is given in the appendix. 
In the next proposition we prove one of the inequalities in Theorem \ref{Character-Lp-norm}. 
\begin{proposition}\label{thm-GFxnLessThan}
Let $1<p<\infty$. If $\bff \in L^p(\bbR^d)$, then $\mathring{\mathfrak{g}}_1(\bff) \in L^p(\bbR^d)$. Moreover, there exists a positive constant depending only on $p$ and $d$ such that for all $\bff \in L^p(\bbR^d)$, 
\begin{equation}
\Vnorm{\mathring{\mathfrak{g}}_1(\bff)}_{L^p(\bbR^d)} \leq C \Vnorm{\bff}_{L^p(\bbR^d)}\,.
\end{equation}
\end{proposition}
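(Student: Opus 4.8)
The plan is to follow Stein's treatment of the classical Littlewood--Paley $g$-function (\cite[Chapter IV]{Stein}): realize $\mathring{\mathfrak{g}}_1$ as the pointwise Hilbert-space norm of a single vector-valued singular integral operator, and then invoke the Calder\'on--Zygmund theory for operators with Hilbert-space--valued kernels. Concretely, set $\mathcal{H} := L^2\bigl((0,\infty),\, t\,\mathrm{d}t;\, \bbR^{N}\bigr)$, where $N$ is the number of scalar components of $\nabla \bU = (\nabla_\bx\bU,\p_t\bU)$, and define
\[
T\bff(\bx) := \bigl( \nabla\bU(\bx,t) \bigr)_{t>0} = \bigl( (\nabla\bbP_t)\ast\bff(\bx) \bigr)_{t>0}\,,
\]
so that $\mathring{\mathfrak{g}}_1(\bff)(\bx) = \Vnorm{T\bff(\bx)}_{\mathcal{H}}$ and hence $\Vnorm{\mathring{\mathfrak{g}}_1(\bff)}_{L^p(\bbR^d)} = \Vnorm{T\bff}_{L^p(\bbR^d;\mathcal{H})}$. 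Thus $T$ is convolution against the kernel $\mathbf{K}(\bx) := \bigl(\nabla\bbP_t(\bx)\bigr)_{t>0}$, which for each $\bx\neq 0$ is a bounded linear map from the target space of $\bff$ into $\mathcal{H}$.

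The $L^2$ endpoint is exactly \autoref{norm-of-f-p=2}: it gives
\[
\Vnorm{T\bff}_{L^2(\bbR^d;\mathcal{H})}^2 = \Vnorm{\mathring{\mathfrak{g}}_1(\bff)}_{L^2(\bbR^d)}^2 = \frac{1}{4}\intdm{\bbR^d}{\left| \left( \bbI_d + \frac{\bfxi \otimes \bfxi}{|\bfxi|^2} \right) \widehat{\bff}(\bfxi) \right|^2}{\bfxi}\,,
\]
and since the symmetric matrix $\bbI_d + \frac{\bfxi\otimes\bfxi}{|\bfxi|^2}$ has operator norm at most $2$, this yields $\Vnorm{T\bff}_{L^2(\bbR^d;\mathcal{H})} \leq \Vnorm{\bff}_{L^2(\bbR^d)}$, i.e.\ $T : L^2(\bbR^d;\bbR^d)\to L^2(\bbR^d;\mathcal{H})$ is bounded.

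For the kernel, I would use the homogeneity of $\bbP_t$ together with the pointwise bounds on its first and second derivatives recorded above and in \cite{Scott-Mengesha} --- in particular $|\nabla\bbP_t(\bx)| \leq c\min\{|\bx|^{-d-1},\, t^{-d-1}\}$ and $|\nabla^2\bbP_t(\bx)| \leq c\min\{|\bx|^{-d-2},\, t^{-d-2}\}$ --- and split each $t$-integral at $t=|\bx|$ to obtain the size and smoothness bounds
\[
\Vnorm{\mathbf{K}(\bx)}_{\mathcal{H}}^2 = \intdmt{0}{\infty}{t\,|\nabla\bbP_t(\bx)|^2}{t}\leq C\,|\bx|^{-2d}\,,\qquad \Vnorm{\nabla_\bx\mathbf{K}(\bx)}_{\mathcal{H}}^2 = \intdmt{0}{\infty}{t\,|\nabla_\bx\nabla\bbP_t(\bx)|^2}{t}\leq C\,|\bx|^{-2d-2}\,.
\]
Integrating the second estimate along the segment from $\bx-\by$ to $\bx$ yields the H\"ormander condition $\int_{|\bx|\geq 2|\by|}\Vnorm{\mathbf{K}(\bx-\by)-\mathbf{K}(\bx)}_{\mathcal{H}}\,\mathrm{d}\bx\leq C$, uniformly in $\by\neq 0$; the first estimate is the companion size bound.

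Finally, $T$ is a convolution operator with an $\mathcal{L}(\bbR^d,\mathcal{H})$-valued kernel that is bounded $L^2(\bbR^d;\bbR^d)\to L^2(\bbR^d;\mathcal{H})$ and whose kernel satisfies H\"ormander's condition, so by the Calder\'on--Zygmund theory for Hilbert-space--valued kernels (see \cite[Chapter II]{Stein}) it is of weak type $(1,1)$ into $L^1(\bbR^d;\mathcal{H})$; Marcinkiewicz interpolation with the $L^2$ bound gives boundedness $L^p\to L^p(\mathcal{H})$ for $1<p\leq 2$, and since the transpose kernel $\bx\mapsto\mathbf{K}(-\bx)^{\ast}$ satisfies the same hypotheses, duality extends this to $2\leq p<\infty$. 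This gives $\Vnorm{\mathring{\mathfrak{g}}_1(\bff)}_{L^p(\bbR^d)}\leq C\Vnorm{\bff}_{L^p(\bbR^d)}$ for every $1<p<\infty$. I expect the only genuinely non-routine ingredient to be the exact $L^2$ identity of \autoref{norm-of-f-p=2}, which is already granted; the main remaining effort is the pair of Hilbert-norm kernel estimates above, where the single point requiring care is the bookkeeping of the $t$-integral split at $t\sim|\bx|$. A minor technical caveat is that $T\bff$ should first be defined for $\bff\in\mathcal{S}(\bbR^d;\bbR^d)$ and the a priori estimate then extended by the density of $\mathcal{S}(\bbR^d;\bbR^d)$ in $L^p(\bbR^d;\bbR^d)$.
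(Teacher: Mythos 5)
Your approach is essentially the one the paper takes: realize $\mathring{\mathfrak{g}}_1$ as the $\mathscr{H}$-norm of a Hilbert-space--valued convolution operator with \autoref{norm-of-f-p=2} supplying the $L^2$ endpoint, verify size and smoothness bounds on the operator-valued kernel, and invoke the vector-valued singular integral theory of \cite[Chapter II, Section 5]{Stein}. One small mismatch worth flagging: you build $T$ from the full gradient $\nabla\bU=(\nabla_\bx\bU,\p_t\bU)$, whereas the displayed definition of $\mathring{\mathfrak{g}}_1$ and the computation in \autoref{norm-of-f-p=2} use only $\p_t\bU$; as written, $\mathring{\mathfrak{g}}_1(\bff)(\bx) = \Vnorm{T\bff(\bx)}_{\mathcal{H}}$ is actually an inequality, and the cited $L^2$ identity does not cover the $\nabla_\bx\bU$ pieces. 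The fix is immediate --- either define $T$ with $\p_t\bU$ alone (which already controls $\mathring{\mathfrak{g}}_1$, and is exactly what the paper does with $\cK_\veps(\bx,t)=\p_t\bbP_{t+\veps}(\bx)$), or run a second Plancherel computation for the $\nabla_\bx$ components. The paper also regularizes with the $\veps$-shift $t\mapsto t+\veps$ so that the kernel bounds and $L^2$ a priori estimate hold uniformly, then passes to $\veps\to 0$ by monotone convergence; your alternative of acting first on Schwartz data and extending by density achieves the same end.
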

\begin{proof}
We use the theory of singular integrals for Hilbert space-valued functions outlined in \cite[Chapter II, Section 5]{Stein}. We use the notation of that section as well.
Define the Hilbert space $\mathscr{H}$ to be the $L^2$ space on $(0,\infty)$ with the functions taking values in $\bbR^d$ with measure $t \, \mathrm{d}t$, i.e.
\[
\mathscr{H} := \left\lbrace \bh : (0,\infty) \to \bbR^{d} \, \Big| \Vnorm{\bh}_{\mathscr{H}}^2 := \int_{0}^{\infty}{t \, |\bh(t)|^2}{\, \mathrm{d}t} < \infty \right\rbrace\,.
\]
The absolute value $|{\bf h}|$ is the norm in $\bbR^{d}$. 
Denote the Banach space of bounded linear operators from $\bbR^{d}$ to $\mathscr{H}$ by $B(\bbR^d,\mathscr{H})$. 
Let $\veps > 0$ be fixed for now. For each $\bx$ consider the matrix-valued function 
$$
\cK_{\veps}(\bx, t)  := \p_t \bbP_{t+\veps}(\bx)\,.
$$
Then for any fixed $\bx \in \bbR^d$, we identify the matrix function $\cK_{\veps}(\bx,\cdot)$ by $\cK_{\veps}(\bx)$. Now we show that $ \cK_{\veps}(\bx)\in B(\bbR^d,\mathscr{H})$. This is equivalent to showing that the integral 
$
\int_{0}^{\infty}{ t \, \left| \p_t \bbP_{t+\veps}(\bx) \right|^2}{\, \mathrm{d}t}$ is finite.   Indeed,  from the formula \eqref{explicit-for-MatrixP} we have that $\left| \p_t \bbP_t(\bx) \right| \leq \frac{C}{(|\bx|^2 + t^2)^{\frac{d+1}{2}}}$, and therefore for each $\bdx\in \mathbb{R}^{d}$ we have the estimate after change of variables that 
\begin{align*}
\Vnorm{\cK_{\veps}(\bx)}_{B(\bbR^d,\mathscr{H})}^2 &= \sup_{|\by| \leq 1} \Vnorm{\Vint{\cK_{\veps}(\bx),\by}}_{\mathscr{H}}^2 \\
&\leq \sup_{|\by| \leq 1} |\by| 
\int_{0}^{\infty}{t |\p_t \bbP_{t+\veps}(\bx)|^2}{\, \mathrm{d}t} \leq C \int_{0}^{\infty}{\frac{t}{(|\bx|^2 + (t+\veps)^2)^{d+1}}}{\mathrm{d}t} \leq C_\varepsilon
\end{align*}
and
\begin{align*}
\Vnorm{\cK_{\veps}(\bx)}_{B(\bbR^d, \mathscr{H})}^2 \leq C \int_{0}^{\infty}{\frac{t}{(|\bx|^2 + (t+\veps)^2)^{d+1}}}{\, \mathrm{d}t} \leq \frac{1}{|\bx|^{2d}} \int_{0}^{\infty}{\frac{t}{ (1 + t^2)^{d+1}}{\, \mathrm{d}t}} = \frac{C}{|\bx|^{2d}}\,.
\end{align*}
From the above two estimates  we also conlcude that 
\begin{equation}\label{eq-SingularIntegral-HilbertSpaceValued1}
\bx \mapsto \Vnorm{\cK_{\veps}(\bx)}_{B(\bbR^d,\mathscr{H})} \in L^2(\bbR^d)\,.
\end{equation}
Similarly, for $1 \leq j \leq d$, again referring to \eqref{explicit-for-MatrixP} that 
\begin{equation*}
\begin{split}
\Vnorm{\p_{x_j} \cK_{\veps}(\bx)}_{B(\bbR^d,\mathscr{H})}^2 &\leq C \int_{0}^{\infty}{\frac{t}{(|\bx|^2+(t+\veps)^2)^{d+2}}}{\, \mathrm{d}t}\\
&\leq C \int_{0}^{\infty}{\frac{t}{(|\bx|^2+t^2)^{d+2}}}{\, \mathrm{d}t} \\
&= \frac{C}{|\bx|^{2d+2}}\,.
\end{split}
\end{equation*}
Thus,
\begin{equation}\label{eq-SingularIntegral-HilbertSpaceValued2}
\Vnorm{\p_{x_j} \cK_{\veps}(\bx)}_{B(\bbR^d,\mathscr{H})} \leq \frac{C}{|\bx|^{d+1}}\,, \qquad 1 \leq j \leq d\,.
\end{equation}
Now define the operator
$$
T_{\veps}(\bff)(\bx) = \int_{\bbR^d}{\cK_{\veps}(\by) \bff(\bx-\by)}{\, \mathrm{d}\by}\,.
$$
Notice that from the definition of $\cK_{\veps}(\by)$,   $T_{\veps}(\bff)(\bx)$ in terms of the Poisson-type kernel $\bU$ as $T_{\veps}(\bff)(\bx) = \partial_{t} \bU(\bdx, t+\varepsilon)$. 
It then follows that  $T_\veps$ is a vector field since the integrand is a matrix multiplying a vector.  In fact,  $T_{\veps} (\bff)(\bx)$ take their values in $\mathscr{H}$ for each $\bx \in \bbR^d$. Moreover, we have  
\begin{align*}
\Vnorm{T_{\veps}(\bff)(\bx)}_{\mathscr{H}}^2 &= \int_{0}^{\infty}{t \, |\p_t \bU(\bx,t+\veps)|^2}{\, \mathrm{d}t} \\
&= \int_{0}^{\infty}{(t+\veps) \, |\p_t \bU(\bx,t+\veps)|^2}{\, \mathrm{d}t} - \int_{0}^{\infty}{\veps \, |\p_t \bU(\bx,t+\veps)|^2}{\, \mathrm{d}t} \\
&= \int_{\veps}^{\infty}{t \, |\p_t \bU(\bx,t)|^2}{\, \mathrm{d}t} - \veps \int_{\veps}^{\infty}{ \, |\p_t \bU(\bx,t)|^2}{\, \mathrm{d}t} \\
&\leq \int_{0}^{\infty}{t \, |\p_t \bU(\bx,t)|^2}{\, \mathrm{d}t} = \big[ \mathring{\mathfrak{g}}_1(\bff)(\bx) \big]^2\,.
\end{align*}
Therefore, by the previous theorem,  $\bdx\mapsto \Vnorm{T_{\veps}(\bff)(\bx)}_{\mathscr{H}}$ is square integrable and 
\begin{align*}
\Vnorm{T_{\veps}(\bff)}_{L^2_{\bx}(\bbR^d)} \leq \frac{1}{2} \left( \intdm{\bbR^d}{\left| \left( \bbI_d + \frac{\bfxi \otimes \bfxi}{|\bfxi|^2} \right) \widehat{\bff}(\bfxi) \right|^2}{\bfxi} \right)^{1/2} \leq  \Vnorm{\bff}_{L^2(\bbR^d)}\,, 
\end{align*}
and thus, we obtain
\begin{equation}\label{eq-SingularIntegral-HilbertSpaceValued3}
\Vnorm{\widehat{\cK}_{\veps}(\bx)}_{B(\bbR^d;\mathscr{H})} \leq 1\,.
\end{equation}
Now using \eqref{eq-SingularIntegral-HilbertSpaceValued1}, \eqref{eq-SingularIntegral-HilbertSpaceValued2} and \eqref{eq-SingularIntegral-HilbertSpaceValued3} we can use the theory of singular integrals \cite[Chapter 2, Section 5]{Stein} and conclude that
$$
\Vnorm{T_{\veps}\bff}_{L^p(\bbR^d)} \leq C \Vnorm{\bff}_{L^p(\bbR^d)}\,, \qquad 1 < p < \infty\,,
$$
with $C$ independent of $\veps$. Notice from the above calculations that for each $\bdx$,  the positive function $\Vnorm{T_{\veps}(\bff)(\bx)}_{\mathscr{H}}$ increases to $\mathring{\mathfrak{g}}_1(\bff)(\bx)$ as $\veps \to 0$ and therefore, we have that
$$
\Vnorm{\mathring{\mathfrak{g}}_1(\bff)}_{L^p(\bbR^d)} \leq C \Vnorm{\bff}_{L^p(\bbR^d)}\,, \qquad 1 < p < \infty\,.
$$
That completes the proof. 
\end{proof}

Next, we prove the reverse inequality by establishing a comparison of norms of operators with matrix symbols $\bbI_{d}$ and $\bbI_{d} + \frac{\bfxi \otimes \bfxi}{|\bfxi|^2}$. We recall that for $1 \leq j \leq d$ and $f$ belonging to the class of Schwartz functions $\cS(\bbR^d)$ the $j^{th}$ \textit{Riesz transform} is defined as
\[
R_j(f)(\bx) := \frac{2}{\omega_d} \pv \int_{\bbR^d}{\frac{y_j}{|\by|^{d+1}} f(\bx-\by) }{\, \mathrm{d}\by}\,.
\]
For any $f \in \cS(\bbR^d)$ we have
$
\widehat{R_j(f)}(\bfxi) = -\imath \frac{\xi_j}{|\bfxi|} \widehat{f}(\bfxi)
$, and $\Vnorm{R_j f}_{L^p(\bbR^d)} \leq C(p) \Vnorm{f}_{L^p(\bbR^d)}$ for $1 < p < \infty$.

\begin{lemma}\label{lma-KornsForPotentials}
Let $\bff \in L^p(\bbR^d)$ for $1 < p < \infty$. The translation-invariant operator $L(\bff)$ defined by 
$$
\left[ L\bff(\bx) \right]_k := f_k(\bx) - 3 R_k \left[ \sum_{j=1}^d R_j f_j \right](\bx)\,, \qquad 1 \leq k \leq d\,,
$$
satisfies
$$
\Vnorm{\bff}_{L^p(\bbR^d)} \leq C \Vnorm{L \bff}_{L^p(\bbR^d)}\,.
$$
\end{lemma}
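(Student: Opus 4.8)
The plan is to pass to the Fourier side, where $L$ is a Fourier multiplier, invert the symbol, and then check that the inverse multiplier is of a form whose $L^p$-boundedness is known. First I would compute the symbol of $L$. Since $\widehat{R_j f}(\bfxi) = -\imath \frac{\xi_j}{|\bfxi|}\widehat f(\bfxi)$, the operator $\bff \mapsto \sum_j R_j f_j$ has symbol $-\imath \frac{\bfxi}{|\bfxi|}\cdot$ acting on the vector $\widehat{\bff}$, and then applying $R_k$ multiplies the $k$-th component by $-\imath\frac{\xi_k}{|\bfxi|}$. Hence
\[
\widehat{L\bff}(\bfxi) = \Big( \bbI_d - 3\, \frac{\bfxi \otimes \bfxi}{|\bfxi|^2} \Big)\widehat{\bff}(\bfxi)\,,
\]
so $L$ has matrix symbol $m(\bfxi) = \bbI_d - 3\,\frac{\bfxi\otimes\bfxi}{|\bfxi|^2}$. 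The key algebraic observation is that $P(\bfxi) := \frac{\bfxi\otimes\bfxi}{|\bfxi|^2}$ is an orthogonal projection, so $m(\bfxi) = \bbI_d - 3P(\bfxi)$ acts as the identity on $(\ker P)$ and as multiplication by $-2$ on $\mathrm{range}(P)$; in particular $m(\bfxi)$ is invertible with
\[
m(\bfxi)^{-1} = \bbI_d - \tfrac{3}{2}\,\frac{\bfxi\otimes\bfxi}{|\bfxi|^2}\,,
\]
which one verifies by direct multiplication using $P^2 = P$.

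Next I would recognize $m(\bfxi)^{-1}$ as corresponding to the operator $S(\bg)$ with $[S\bg]_k = g_k - \tfrac32 R_k[\sum_j R_j g_j]$, by the same symbol computation as above. Since each Riesz transform $R_j$ is bounded on $L^p(\bbR^d)$ for $1 < p < \infty$ with $\Vnorm{R_j g}_{L^p} \leq C(p)\Vnorm{g}_{L^p}$, and finite sums and compositions of bounded operators are bounded, it follows that $S : L^p(\bbR^d;\bbR^d) \to L^p(\bbR^d;\bbR^d)$ is bounded with norm depending only on $p$ and $d$. Finally, since $m(\bfxi)^{-1}m(\bfxi) = \bbI_d$ pointwise in $\bfxi$, the composition $S \circ L$ is the identity operator on $\mathcal S(\bbR^d;\bbR^d)$ (both sides being Fourier multipliers with symbol $\bbI_d$), hence $\bff = S(L\bff)$ for all $\bff$ in the relevant class, and therefore
\[
\Vnorm{\bff}_{L^p(\bbR^d)} = \Vnorm{S(L\bff)}_{L^p(\bbR^d)} \leq C \Vnorm{L\bff}_{L^p(\bbR^d)}\,,
\]
as claimed, with $C = C(p,d)$. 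A density argument extends the identity $\bff = S(L\bff)$ from $\mathcal S$ to all of $L^p$ if needed, using boundedness of both $L$ and $S$ on $L^p$.

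I do not anticipate a serious obstacle here: the content is entirely the algebraic identity $(\bbI_d - 3P)(\bbI_d - \tfrac32 P) = \bbI_d$ together with the standard $L^p$-boundedness of the Riesz transforms, which is quoted in the excerpt. The only point requiring a small amount of care is making the symbol computation for vector-valued multipliers precise — i.e. tracking that $\sum_j R_j f_j$ contributes the row vector $-\imath\frac{\bfxi}{|\bfxi|}$ dotted into $\widehat{\bff}$ and that the outer $R_k$ reinserts the column vector $-\imath\frac{\bfxi}{|\bfxi|}$, producing the rank-one term $\frac{\bfxi\otimes\bfxi}{|\bfxi|^2}$ rather than a scalar. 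Once that bookkeeping is in place the rest is immediate.
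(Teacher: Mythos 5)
Your approach is sound and in essence the same as the paper's: both arguments invert $L$ using the $L^p$-boundedness of the Riesz transforms, you by exhibiting the inverse symbol directly, the paper by applying $\sum_k R_k$ to $L\bff$ to recover a constant times $\sum_k R_k f_k$ and then reassembling $f_k = (L\bff)_k + 3R_k[\sum_j R_j f_j]$. Your Fourier-multiplier presentation is cleaner and makes the structural point (that $L$ is a multiplier with symbol a linear combination of $\bbI_d$ and a rank-one projection) explicit.

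However, there is a sign error in your symbol computation. Since $\widehat{R_k[\sum_j R_j f_j]}(\bfxi) = \bigl(-\imath\tfrac{\xi_k}{|\bfxi|}\bigr)\sum_j\bigl(-\imath\tfrac{\xi_j}{|\bfxi|}\bigr)\widehat{f_j}(\bfxi) = -\tfrac{\xi_k}{|\bfxi|^2}\,\bfxi\cdot\widehat{\bff}(\bfxi)$, the two factors of $-\imath$ combine to give $(-\imath)^2 = -1$, and then multiplying by the coefficient $-3$ in $L$ produces $+3$, not $-3$. The correct symbol is therefore
\[
m(\bfxi) = \bbI_d + 3\,\frac{\bfxi\otimes\bfxi}{|\bfxi|^2}\,,
\]
as the remark following the lemma in the paper records. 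Consequently $m(\bfxi)$ has eigenvalues $1$ and $4$, and the inverse symbol is $m(\bfxi)^{-1} = \bbI_d - \tfrac{3}{4}\,\tfrac{\bfxi\otimes\bfxi}{|\bfxi|^2}$, corresponding to the operator $[S\bg]_k = g_k + \tfrac{3}{4} R_k\bigl[\sum_j R_j g_j\bigr]$, rather than $\bbI_d - \tfrac{3}{2}P$ and $g_k - \tfrac{3}{2} R_k[\cdots]$ as you wrote. The error is harmless for the conclusion --- either symbol is invertible and either candidate inverse is a bounded combination of Riesz transforms --- but the explicit formulas should be corrected.
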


\begin{proof}
Clearly by the $L^p$ boundedness of the Riesz transforms,
$$
\Vnorm{L \bff}_{L^p(\bbR^d)} \leq C \Vnorm{\bff}_{L^p(\bbR^d)}\,,
$$
so the $L^p$ norm of $L \bff$ is finite. Applying the $k^{th}$ Riesz transform to $(L \bff)_k$ and summing gives
\begin{align*}
\sum_{k=1}^d R_k (L \bff)_k &= R_k f_k - 3 R_k R_k \left( \sum_{j=1}^d R_j f_j \right)= -2 \sum_{k=1}^d R_k f_k\,.
\end{align*}
Thus, by the $L^p$ boundedness of the Riesz transforms we have
\begin{equation}\label{eq-KornsInequalityForPotentials-Proof}
\Vnorm{\sum_{j=1}^d R_k f_k}_{L^p(\bbR^d)} \leq C \Vnorm{T\bff}_{L^p(\bbR^d)}\,.
\end{equation}
Now writing as 
$$
f_k(\bx) = f_k(\bx) - 3 R_k \left( \sum_{j=1}^d R_j f_j \right)(\bx) + 3 R_k \left( \sum_{j=1}^d R_j f_j \right)(\bx)\,,
$$
and so taking the $L^p$ norm on both sides and using the $L^p$ boundedness of the Riesz transforms gives
\begin{equation*}
\begin{split}
\Vnorm{f_k}_{L^p(\bbR^d)} &\leq \Vnorm{f_k - 3 R_k \left( \sum_{j=1}^d R_j f_j \right)}_{L^p(\bbR^d)} + \Vnorm{3 R_k \left( \sum_{j=1}^d R_j f_j \right)}_{L^p(\bbR^d)} \\
&= \Vnorm{(L\bff)_k}_{L^p(\bbR^d)} + \Vnorm{3 R_k \left( \sum_{j=1}^d R_j f_j \right)}_{L^p(\bbR^d)} \\
&\leq C \Vnorm{(L\bff)}_{L^p(\bbR^d)} + C \Vnorm{ \sum_{j=1}^d R_j f_j }_{L^p(\bbR^d)}  \EquationReference{\eqref{eq-KornsInequalityForPotentials-Proof}}{\leq} C \Vnorm{L \bff}_{L^p(\bbR^d)}\,.
\end{split}
\end{equation*}
Summing over $k$ finishes the proof.
\end{proof}

\begin{remark}
The symbol associated to $L$ is
$
\bbI_d + 3 \frac{\bfxi \otimes \bfxi}{|\bfxi|^2}\,,
$
which will appear in the proof of the converse inequalities for $\mathring{\mathfrak{g}}_1$. 
\end{remark}
The next result proves the remaining inequality in Theorem \ref{Character-Lp-norm}. 
\begin{proposition}\label{character-via-g1}
Let $1<p<\infty$. Then there exists a positive constant $C$ such that for any $\bff \in L^p(\bbR^d;\bbR^{d})$,  
$$
\Vnorm{\bff}_{L^p(\bbR^d)} \leq C \Vnorm{\mathring{\mathfrak{g}}_1(\bff)}_{L^p(\bbR^d)}\,.
$$
\end{proposition}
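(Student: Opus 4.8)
The plan is to follow the classical route for the lower $g$-function bound in \cite[Chapter IV]{Stein}: polarize the $L^2$ identity of \autoref{norm-of-f-p=2}, estimate the resulting bilinear form using the upper bound of \autoref{thm-GFxnLessThan}, and then strip off the matrix symbol that appears by means of the Riesz-transform inequality of \autoref{lma-KornsForPotentials}. The operator $L$ of that lemma was set up precisely so that this last step works.

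First I would record a polarized form of \eqref{eq-GFxnL2-Equation2}. For $\bff,\bg\in L^2(\bbR^d;\bbR^d)$ write $\bU(\bx,t)=\bbP_t\ast\bff(\bx)$ and $\mathbf{V}(\bx,t)=\bbP_t\ast\bg(\bx)$ for their Poisson-type integrals. \autoref{norm-of-f-p=2} identifies the quadratic form $\bff\mapsto\int_{\bbR^d}\int_0^\infty t|\p_t\bU(\bx,t)|^2\,\mathrm{d}t\,\mathrm{d}\bx$ with $\tfrac14\Vnorm{M(\cdot)\widehat{\bff}}_{L^2(\bbR^d)}^2$, where $M(\bfxi)=\bbI_d+\frac{\bfxi\otimes\bfxi}{|\bfxi|^2}$. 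The cross terms produced by polarizing over $\bbR$ are finite by the Cauchy--Schwarz inequality together with \autoref{norm-of-f-p=2}; moreover $M(\bfxi)$ is real and symmetric, and $\frac{\bfxi\otimes\bfxi}{|\bfxi|^2}$ is an orthogonal projection, so $M(\bfxi)^2=\bbI_d+3\frac{\bfxi\otimes\bfxi}{|\bfxi|^2}$. Hence Plancherel's theorem should give
\[
\int_{\bbR^d}\int_0^\infty t\,\p_t\bU(\bx,t)\cdot\overline{\p_t\mathbf{V}(\bx,t)}\,\mathrm{d}t\,\mathrm{d}\bx \;=\; \frac{1}{4}\int_{\bbR^d} L\bff(\bx)\cdot\overline{\bg(\bx)}\,\mathrm{d}\bx\,,
\]
with $L$ the translation-invariant operator of \autoref{lma-KornsForPotentials}, whose symbol is exactly $\bbI_d+3\frac{\bfxi\otimes\bfxi}{|\bfxi|^2}$ by the Remark following that lemma.

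Next I would estimate the left-hand side for $1<p<\infty$, $p'=p/(p-1)$, $\bff\in L^2\cap L^p$, and $\bg\in L^2\cap L^{p'}$. The Cauchy--Schwarz inequality in the $t$-variable bounds the inner integral pointwise in $\bx$ by $\mathring{\mathfrak{g}}_1(\bff)(\bx)\,\mathring{\mathfrak{g}}_1(\bg)(\bx)$; integrating in $\bx$, then Hölder's inequality and \autoref{thm-GFxnLessThan} applied with exponent $p'$ yield
\[
\Big|\int_{\bbR^d} L\bff\cdot\overline{\bg}\,\mathrm{d}\bx\Big| \;\leq\; C\,\Vnorm{\mathring{\mathfrak{g}}_1(\bff)}_{L^p(\bbR^d)}\Vnorm{\bg}_{L^{p'}(\bbR^d)}\,.
\]
Taking the supremum over $\bg$ in the unit ball of $L^2\cap L^{p'}$, which is dense in the unit ball of $L^{p'}(\bbR^d;\bbR^d)$, and noting $L\bff\in L^p$ since the Riesz transforms are bounded on $L^p$, we obtain $\Vnorm{L\bff}_{L^p(\bbR^d)}\leq C\Vnorm{\mathring{\mathfrak{g}}_1(\bff)}_{L^p(\bbR^d)}$; \autoref{lma-KornsForPotentials} then converts this into $\Vnorm{\bff}_{L^p(\bbR^d)}\leq C\Vnorm{\mathring{\mathfrak{g}}_1(\bff)}_{L^p(\bbR^d)}$ for $\bff\in L^2\cap L^p$. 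For general $\bff\in L^p(\bbR^d;\bbR^d)$, approximate by $\bff_n\in L^2\cap L^p$ with $\bff_n\to\bff$ in $L^p$: the sublinearity $|\mathring{\mathfrak{g}}_1(\bff_n)-\mathring{\mathfrak{g}}_1(\bff)|\leq\mathring{\mathfrak{g}}_1(\bff_n-\bff)$ and \autoref{thm-GFxnLessThan} show $\mathring{\mathfrak{g}}_1(\bff_n)\to\mathring{\mathfrak{g}}_1(\bff)$ in $L^p$, so the estimate passes to the limit.

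I expect the main obstacle to be making the polarization step fully rigorous: justifying the interchange of the $\bx$- and $t$-integrations with the Fourier transform (already the delicate point in the appendix proof of \autoref{norm-of-f-p=2}), checking that $\mathbf{V}$ and $\p_t\mathbf{V}$ decay enough that the cross term is absolutely convergent, and — most importantly — carrying the $(d+1)$-st component of the vector-valued Poisson-type integral through the computation so that the surviving multiplier is genuinely $\bbI_d+3\frac{\bfxi\otimes\bfxi}{|\bfxi|^2}$ acting on the $\bbR^d$-valued field and matches $L$. Once the displayed identity is established, the remainder is the standard duality-plus-Riesz-transforms argument.
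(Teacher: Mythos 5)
Your proposal follows exactly the paper's proof: polarize the $L^2$ identity of \autoref{norm-of-f-p=2}, recognize that the squared symbol $M(\bfxi)^2 = \bbI_d + 3\tfrac{\bfxi\otimes\bfxi}{|\bfxi|^2}$ is the symbol of the operator $L$ from \autoref{lma-KornsForPotentials}, estimate the bilinear form via Cauchy--Schwarz in $t$ plus H\"older and \autoref{thm-GFxnLessThan}, dualize, invert $L$, and finally pass from $L^2\cap L^p$ to $L^p$ by density using the reverse-triangle-inequality bound $|\mathring{\mathfrak{g}}_1(\bff_m)-\mathring{\mathfrak{g}}_1(\bff)|\le\mathring{\mathfrak{g}}_1(\bff_m-\bff)$. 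The concerns you flag (justifying the interchange in the polarization, tracking the $(d+1)$-st component) are genuine but are precisely what \autoref{norm-of-f-p=2} and its appendix computation are designed to settle, so the argument is complete.
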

\begin{proof}
Let $\bff_1$, $\bff_2$ be in $L^2(\bbR^d;\bbR^{d})$ with respective Poisson-type integrals $\bU_1$, $\bU_2$. Polarization of the identity \eqref{eq-GFxnL2-Equation2} leads to
{\begin{equation}
\begin{split}
&\int_{0}^{\infty}\int_{\bbR^d}{t \Vint{\p_t \bU_1(\bx,t), \p_t \bU_2(\bx,t)}}{d\bx}{dt} \\
&= \frac{1}{4} \intdm{\bbR^d}{\Vint{\left( \bbI_d + \frac{\bfxi \otimes \bfxi}{|\bfxi|^2} \right)\widehat{\bff_1}(\bfxi), \left( \bbI_d + \frac{\bfxi \otimes \bfxi}{|\bfxi|^2} \right)\widehat{\bff_2}(\bfxi)}}{\bfxi} \\
&= \frac{1}{4} \intdm{\bbR^d}{\Vint{\left( \bbI_d + 3 \frac{\bfxi \otimes \bfxi}{|\bfxi|^2} \right)\widehat{\bff_1}(\bfxi), \widehat{\bff_2}(\bfxi)}}{\bfxi} \\
&= \frac{1}{4} \intdm{\bbR^d}{\Vint{L\bff_1(\bx),\bff_2(\bx)}}{\bx}\,,
\end{split}
\end{equation}}
where the last inequality follows by Parseval's relation.
Now suppose in addition that $\bff_1 \in L^p(\bbR^d;\bbR^{d})$ and $\bff_2 \in L^{p'}(\bbR^d;\mathbb{R}^{d})$ with $\Vnorm{\bff_2}_{L^{p'}(\bbR^d)} \leq 1$. Then using the Cauchy-Schwarz inequality, H\"older's inequality and Proposition \ref{thm-GFxnLessThan} we have
\begin{align*}
\left| \int_{\bbR^d}{\Vint{L\bff_1(\bx),\bff_2(\bx)}}{\, \mathrm{d}\bx} \right|& \leq 4 \int_{\bbR^d}{\mathring{\mathfrak{g}}_1(\bff_1)(\bx) \mathring{\mathfrak{g}}_1(\bff_2)(\bx)}{\, \mathrm{d}\bx} \\
&\leq 4 \Vnorm{\mathring{\mathfrak{g}}_1(\bff_1)}_{L^p(\bbR^d)} \Vnorm{\mathring{\mathfrak{g}}_1(\bff_2)}_{L^{p'}(\bbR^d)} \leq C \Vnorm{\mathring{\mathfrak{g}}_1(\bff_1)}_{L^p(\bbR^d)}\,.
\end{align*}
Taking the supremum on both sides over all $\bff_2 \in L^2(\bbR^{d};\bbR^{d}) \cap L^{p'}(\bbR^d;\bbR^{d})$ with $\Vnorm{\bff_2}_{L^{p'}(\bbR^d)} \leq 1$ gives
\begin{equation}
\Vnorm{L \bff}_{L^p(\bbR^d)} \leq C \Vnorm{\mathring{\mathfrak{g}}_1(\bff)}_{L^p(\bbR^d)}
\end{equation}
for every $\bff \in (L^2 \cap L^p)(\bbR^d)$. Using Lemma \ref{lma-KornsForPotentials},
\begin{equation}\label{eq-GFxnGreaterThan-Proof}
\Vnorm{\bff}_{L^p(\bbR^d)} \leq C \Vnorm{\mathring{\mathfrak{g}}_1(\bff)}_{L^p(\bbR^d)}
\end{equation}
for every $\bff \in (L^2 \cap L^p)(\bbR^d)$.
The passage to the general case $\bff \in L^p$ follows by density.  Let $\bff_m$ be a sequence of functions in $(L^2 \cap L^p)(\bbR^d)$ which converge in $L^p$ to an arbitrary function $\bff \in L^p$. Then
\begin{align*}
\left| \mathring{\mathfrak{g}}_1(\bff_m)(\bx) - \mathring{\mathfrak{g}}_1(\bff)(\bx) \right|^2 &= \left| \left( \intdmt{0}{\infty}{t \, |\p_t \bU_m (\bx,t)|^2}{t} \right)^{\frac{1}{2}} - \left( \intdmt{0}{\infty}{t \, |\p_t \bU (\bx,t)|^2}{t}  \right)^{\frac{1}{2}} \right|^2 \\
	&= \intdmt{0}{\infty}{t \, \left( |\p_t \bU_m(\bx,t)|^2 + |\p_t \bU(\bx,t)|^2 \right)}{t} \\
	&\qquad-2 \left( \intdmt{0}{\infty}{t \, |\p_t \bU_m (\bx,t)|^2}{t} \right)^{\frac{1}{2}} \left( \intdmt{0}{\infty}{t \, |\p_t \bU (\bx,t)|^2}{t} \right)^{\frac{1}{2}}.  
	\end{align*}
By Cauchy-Schwartz inequality the last expression cannot exceed 
\[
\int_{0}^{\infty}{t \, \left( |\p_t \bU_m(\bx,t)|^2 + |\p_t \bU(\bx,t)|^2 - 2 \Vint{\p_t \bU_m(\bx,t), \p_t \bU(\bx,t)}\right) \,}{\mathrm{d}t}\,,
\]
and so 
\[
\left| \mathring{\mathfrak{g}}_1(\bff_m)(\bx) - \mathring{\mathfrak{g}}_1(\bff)(\bx) \right|^2  \leq \int_{0}^{\infty}{t \, |\p_t (\bU_m - \bU)(\bx,t)|^2 \,}{\mathrm{d}t} = \left| \mathring{\mathfrak{g}}_1(\bff_m - \bff)(\bx) \right|^2. 
\]
Therefore by Theorem \ref{thm-GFxnLessThan},  $\mathring{\mathfrak{g}}_1(\bff_m)$ converges to $\mathring{\mathfrak{g}}_1(\bff)$ in $L^p(\bbR^d)$, so we obtain \eqref{eq-GFxnGreaterThan-Proof} for a general $\bff \in L^p(\bbR^d)$ and the proof is complete.
\end{proof}

Now we come to the final preliminary inequality that must be established before proving our main result. 
Recall that the Riesz potential $\cI^s$  and the Bessel potential $\cJ^s$  acting on a function $\bff$ 
  are given by 
\begin{equation}
\cI_s(\bff)(\bx) := c_{d,s} \intdm{}{\frac{\bff(\by)}{|\bx-\by|^{d-s}}}{\by}\,,
\end{equation}
\begin{equation}
\cJ_s(\bff)(\bx) := ( \widehat{\cG_s} \, \widehat{\bff} )^{\vee} = \cG_s \, \ast \, \bff\,, \qquad \cG_s(\bx) := \big( (1+4 \pi^2 |\bfxi|^2)^{-s/2} \big)^{\vee}(\bx)\,,
\end{equation}
where $c_{d,s}$ is an appropriate normalizing constant.

\begin{theorem}\label{theorem-PointwiseEstimateByDs}
Let $\bff \in L^p(\bbR^d)$, $1 < p < \infty$ and let $0 < s < 1$. Denote $\bff_s := \cI_s(\bff)$. Let $\bU$ and $\bU_s$ be the Poisson-type integrals of $\bff$, $\bff_s$ respectively. Then for every $\bx \in \bbR^d$ we have
\[
\mathring{\mathfrak{g}}_1(\bff)(\bx) \leq C \,  D^{s}(\bff_s)(\bx)\,.
\]
\end{theorem}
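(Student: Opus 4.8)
The plan is to follow the scheme of Stein's proof of \cite[Theorem 1]{Stein-Bessel}, the one genuinely new point being that the matrix Poisson kernel $\bbP_t$ automatically turns every convolution into a weighted average of the \emph{directional} difference quotient $D_s(\bx,\by):=\big(\bff_s(\bx+\by)-\bff_s(\bx)\big)\cdot\frac{\by}{|\by|}$, which is exactly the quantity controlled by $D^s$. Since $\bff=\cI_s^{-1}\bff_s$, i.e.\ $\widehat{\bff}=|\bfxi|^{s}\widehat{\bff_s}$, I would first move the $s$ extra derivatives onto the $t$ variable using the subordination identity $|\bfxi|^{s}=c_s\int_0^\infty\tau^{-1-s}\big(1-\e^{-2\pi|\bfxi|\tau}\big)\,\mathrm{d}\tau$ ($0<s<1$), which gives
\[
\bU(\bx,t)=c_s\int_0^\infty \tau^{-1-s}\big(\bU_s(\bx,t)-p_\tau\ast\bU_s(\bx,t)\big)\,\mathrm{d}\tau .
\]
Differentiating in $t$, writing $\partial_t\bU_s(\bx,t)-p_\tau\ast\partial_t\bU_s(\bx,t)$ as $-\int_t^{t+\tau}\partial_{vv}\bU_s(\bx,v)\,\mathrm{d}v$ up to a term carrying the matrix factor of $\widehat{\bbP}_t$, and applying Fubini (legitimate by the recorded $L^1$-bounds on $\partial_t\mathfrak{p}^{jk}_t$ and $\partial_{x_\ell}\mathfrak{p}^{jk}_t$), one reaches a Weyl-type fractional integral in $t$,
\[
\partial_t\bU(\bx,t)=-\frac{c_s}{s}\int_t^\infty (v-t)^{-s}\,\partial_{vv}\bU_s(\bx,v)\,\mathrm{d}v+\text{(lower-order terms)} ,
\]
the lower-order terms being of the same shape and treated identically.

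Next I would bound $\partial_{vv}\bU_s(\bx,v)$ pointwise. Because $\int_{\bbR^d}\overline{\bP}(\by,v)\big(\bz\cdot\frac{\by}{|\by|}\big)\,\mathrm{d}\by$ is independent of $v$, the representation derived just before the statement gives $\partial_{vv}\bU_s(\bx,v)=\int_{\bbR^d}\partial_{vv}\overline{\bP}(\by,v)\,D_s(\bx,\by)\,\mathrm{d}\by$. Using $|\partial_{vv}\overline{\bP}(\by,v)|\le C\min\{v^{-(d+2)},|\by|^{-(d+2)}\}$ and a Cauchy--Schwarz carried out on each dyadic annulus $S_k=\{2^{k}\le|\by|<2^{k+1}\}$, with $a_k:=\int_{S_k}|\by|^{-d-2s}|D_s(\bx,\by)|^2\,\mathrm{d}\by$ so that $\sum_{k\in\bbZ}a_k=D^s(\bff_s)(\bx)^2$, this yields
\[
|\partial_{vv}\bU_s(\bx,v)|\le C\sum_{k\in\bbZ}\min\{v^{-(d+2)},2^{-k(d+2)}\}\,2^{k(d+s)}\,a_k^{1/2}.
\]
Then I would invoke the weighted Hardy inequality $\int_0^\infty t\,\big|\int_t^\infty(v-t)^{-s}h(v)\,\mathrm{d}v\big|^2\,\mathrm{d}t\le C\int_0^\infty v^{3-2s}|h(v)|^2\,\mathrm{d}v$ (the exponent $3-2s$ is forced by scaling) to reduce the theorem to $\int_0^\infty v^{3-2s}|\partial_{vv}\bU_s(\bx,v)|^2\,\mathrm{d}v\le C\,D^s(\bff_s)(\bx)^2$. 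Inserting the shell estimate and splitting the sum at $2^{k}\simeq v$, the left-hand side becomes a constant times
\[
\sum_m 2^{-2m(d+s)}\Big(\sum_{k\le m}2^{k(d+s)}a_k^{1/2}\Big)^2+\sum_m 2^{2m(2-s)}\Big(\sum_{k> m}2^{-k(2-s)}a_k^{1/2}\Big)^2 ,
\]
and each of the two pieces is $\le C\sum_k a_k=C\,D^s(\bff_s)(\bx)^2$ by the discrete (weighted) Hardy inequality, closing the estimate.

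The main obstacle is precisely the borderline character of the defining $t$-integral: the naive kernel bounds give only $|\partial_t\bU(\bx,t)|\lesssim t^{-1}D^s(\bff_s)(\bx)$, and $\int_0^\infty t\cdot t^{-2}\,\mathrm{d}t$ diverges at both endpoints, so no scale-invariant pointwise bound can work. One must exploit the genuine square-summability of the scale components $a_k$ of $D^s(\bff_s)(\bx)$, not merely their sum -- that is exactly what makes the two-layer Hardy argument succeed: continuous in $t$ after subordination, then discrete in the dyadic $|\by|$-scale. The remaining technical points, requiring care but no new idea, are the rigorous justification of the subordination and Fubini manipulations, the vanishing of the boundary contributions (e.g.\ $\partial_v\bU_s(\bx,v)\to0$), and checking that the lower-order correction terms reduce to the same model estimate.
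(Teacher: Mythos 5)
Your plan follows the same overall strategy as the paper (which in turn follows Stein): represent $\p_t\bU$ as a Weyl-type fractional integral of $\p_{tt}\bU_s$, reduce $\mathring{\mathfrak{g}}_1(\bff)(\bx)^2$ to $\int_0^\infty t^{3-2s}|\p_{tt}\bU_s(\bx,t)|^2\,\mathrm{d}t$ via a Hardy-type inequality, and then exploit the kernel bounds $|\p_{tt}\overline{\bP}(\by,t)|\lesssim\min\{t^{-(d+2)},|\by|^{-(d+2)}\}$ together with the fact that $\p_{tt}\bU_s(\bx,t)=\int\p_{tt}\overline{\bP}(\by,t)\,\mathfrak{d}(\bff_s)(\bx,\by)\,\mathrm{d}\by$ so that only the directional difference enters. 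The implementation differs in two respects. First, you obtain the identity $\p_t\bU(\bx,t)=c_s\int_t^\infty(v-t)^{-s}\p_{vv}\bU_s(\bx,v)\,\mathrm{d}v$ via the subordination formula for $|\bfxi|^s$; the paper verifies exactly the same identity by a direct Fourier computation in the appendix, which has the advantage of absorbing the matrix correction term cleanly into the $\Gamma(2-s)/\Gamma(1-s)$ factor, whereas your "lower-order terms of the same shape, treated identically" is a genuine gap that needs to be worked out — the discrepancy between $p_\tau\ast\bU_s(\cdot,t)$ and $\bU_s(\cdot,t+\tau)$ carries a factor $(2\pi|\bfxi|\tau)\bbA(\bfxi)$, and one must check this piece also lands in the Weyl-integral form with the same exponent. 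Second, in the final step you discretize over dyadic annuli in $|\by|$, perform Cauchy--Schwarz on each shell against the weight $|\by|^{-d-2s}$, and invoke a discrete weighted Hardy inequality; the paper instead splits the $\by$-integral at $|\by|=t$, applies Cauchy--Schwarz using $|\p_{tt}\overline{\bP}|$ itself as the weight so that $\int_{|\by|\leq t}|\p_{tt}\overline{\bP}|\,\mathrm{d}\by\lesssim t^{-2}$, and then interchanges the $t$- and $\by$-integrations via Fubini, computing $\int_0^{|\by|}t^{1-2s}|\by|^{-d-2}\,\mathrm{d}t+\int_{|\by|}^\infty t^{-d-1-2s}\,\mathrm{d}t\simeq|\by|^{-d-2s}$. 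Both routes close the estimate; the paper's avoids the discrete Hardy lemma entirely and is shorter, while your dyadic formulation makes explicit the "two-scale" mechanism and why square-summability of the shell contributions $a_k$ is what saves the borderline scaling. Your proposal is correct in spirit and would succeed, but the subordination correction term and the discrete Hardy application should be written out to make the argument complete.
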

\begin{proof}
We first establish the equality
\begin{equation}\label{relation-Ut-Utt}
\p_t \bU(\bx,t) = \frac{-1}{\Gamma(1-s)} \int_{0}^{\infty}{\p_{tt} \bU_s(\bx,t+r)r^{-s}}{\mathrm{d}r}\,,
\end{equation}
where $\Gamma$ denotes the Gamma function.  
This identity can be established using the Fourier transform and will be done in the appendix. 
We will use this to estimate $\mathring{\mathfrak{g}}_1(\bff)$ pointwise.  To that end, we write 
\begin{equation*}
\begin{split}
\mathring{\mathfrak{g}}_1(\bff)(\bx) = \intdmt{0}{\infty}{t \, |\p_t \bU(\bx,t) |^2}{t}= \intdmt{0}{\infty}{t \, \left| \frac{-1}{\Gamma(1-s)} \intdmt{t}{\infty}{\p_{tt} \bU_s(\bx,r)(r-t)^{-s}}{r} \right|^2}{t}. 
\end{split}
\end{equation*}
Dividing the intervals of integration in the inside integral, we see that 
\begin{equation*}
\begin{split}
	\mathring{\mathfrak{g}}_1(\bff)(\bx)&\leq C \intdmt{0}{\infty}{t \, \left| \intdmt{t}{2t}{\p_{tt} \bU_s(\bx,r)(r-t)^{-s}}{r} \right|^2}{t} \\
	&+ C \intdmt{0}{\infty}{t \, \left| \intdmt{2t}{\infty}{\p_{tt} \bU_s(\bx,r)(r-t)^{\frac{1}{2}} (r-t)^{-\frac{1}{2}-s}}{r} \right|^2}{t} \\
	&\leq C \intdmt{0}{\infty}{t \, \left( \intdmt{t}{2t}{|\p_{tt} \bU_s(\bx,r)|^2 (r-t)^{-s}}{r} \right) \left( \intdmt{t}{2t}{\frac{1}{(r-t)^{s}}}{r} \right)}{t} \\
	&\qquad + C \intdmt{0}{\infty}{t \, \left( \intdmt{2t}{\infty}{|\p_{tt} \bU_s(\bx,r)|^2 (r-t)}{r} \right) \left( \intdmt{2t}{\infty}{\frac{1}{(r-t)^{1+2s}}}{r} \right)}{t},  \end{split}
	\end{equation*}
where we have used the Cauchy-Schwartz inequality in the last inequality.  Simplification and interchanging of the integrals via Fubini implies that  
\begin{align*}\label{g-bounded-by-Utt}
	\mathring{\mathfrak{g}}_1(\bff)(\bx)&\leq C \intdmt{0}{\infty}{t^{2-s} \, \intdmt{t}{2t}{|\p_{tt} \bU_s(\bx,r)|^2 (r-t)^{-s}}{r}}{t} \\
	&+ C \intdmt{0}{\infty}{t^{1-2s} \, \intdmt{2t}{\infty}{|\p_{tt} \bU_s(\bx,r)|^2 (r-t)}{r}}{t} \\
	&=C \intdmt{0}{\infty}{|\p_{tt} \bU_s(\bx,r)|^2 \left( \intdmt{r/2}{r}{t^{2-s}(r-t)^{-s}}{t} + \intdmt{0}{r/2}{t^{1-2s}(r-t)}{t}\right)}{r} \\
	&\leq C \intdmt{0}{\infty}{r^{3-2s} |\p_{tt} \bU_s(\bx,r)|^2}{r} = C \intdmt{0}{\infty}{t^{3-2s} |\p_{tt} \bU_s(\bx,t)|^2}{t}\,.
\end{align*}
Notice also that using the fact that $\bbP_t$ integrates to $\bbI_{d+1}$ we see that 
\begin{align*}
\p_{tt} \bU_s(\bx,t) &= \intdm{\bbR^d}{\p_{tt}\bbP_t(\by)(\bff_s(\bx-\by)-\bff_s(\bx))}{\by} \\
&= \intdm{\bbR^d}{\p_{tt}\overline{\bP}(\by, t)(\bff_s(\bx-\by)-\bff_s(\bx)) \cdot \frac{\by}{|\by|}}{\by}\,,
\end{align*}
where we use the relation \eqref{defn-vector-P}.
By computation of $\p_{tt}\bbP_t $ it is not difficult to show that  
\[
|\p_{tt} \bU_s(\bx,t)| \leq \int_{\bbR^{d}}{|\p_{tt} \overline{\bP}(\by,t)| \, \left| (\bff_s(\bx+\by)-\bff_s(\bx)) \cdot \frac{\by}{|\by|}\right| }{\, \mathrm{d}\by}\,. 
\]
We now divide the integration region in the right hand side and estimate using H\"older's inequality to obtain  
\begin{align*}
|\p_{tt} \bU_s(\bx,t)|^2 &\leq \left( \int_{|\by| \leq t}{|\p_{tt} \overline{\bP}(\by,t)| \, \left| \bff_s(\bx+\by)-\bff_s(\bx) \cdot \frac{\by}{|\by|}\right| }{\, \mathrm{d}\by} + \int_{t \leq |\by|}{\cdots}{\, \mathrm{d}\by} \right)^2 \\
	&\leq \left( \int_{|\by| \leq t}{|\p_{tt} \overline{\bP}(\by, t)|}{\, \mathrm{d}\by} \right) \left( \int_{|\by| \leq t}{|\mathfrak{d}(\bff_s)(\bx,\by)|^2 \, |\p_{tt} \overline{\bP}_t(\by)|}{\, \mathrm{d}\by} \right) \\
	& \qquad + \left( \int_{|\by| > t}{|\p_{tt} \overline{\bP}(\by, t)|}{\, \mathrm{d}\by} \right) \left( \int_{|\by| > t}{|\mathfrak{d}(\bff_s)(\bx,\by)|^2 \, |\p_{tt} \overline{\bP}(\by, t)|}{\, \mathrm{d}\by} \right)\,. 
	\end{align*}
	where we introduced the notation $\mathfrak{d}(\bff_s)(\bx,\by) := \bff_s(\bx+\by)-\bff_s(\bx) \cdot \frac{|\by|}{|\by|}$.
	We now use the estimates for $|\p_{tt} \overline{\bP}(\by,t)|$ to get 
\begin{align*}
|\p_{tt} \bU_s(\bx,t)|^2 
	&\leq \left( \int_{|\by| \leq t}{ \frac{C}{t^{d+2}}}{\, \mathrm{d}\by} \right) \left( \int_{|\by| \leq t}{|\mathfrak{d}(\bff_s)(\bx,\by)|^2 \, |\p_{tt} \overline{\bP}(\by,t)|}{\, \mathrm{d}\by} \right) \\
	& \qquad + \left( \int_{|\by| > t}{ \frac{C}{|\by|^{d+2}}}{\, \mathrm{d}\by} \right) \left( \int_{|\by| > t}{|\mathfrak{d}(\bff_s)(\bx,\by)|^2 \, |\p_{tt} \overline{\bP}(\by, t)|}{\, \mathrm{d}\by} \right) \\
	&\leq \frac{C}{t^2} \left( \int_{|\by| \leq t}{|\mathfrak{d}(\bff_s)(\bx,\by)|^2 \, |\p_{tt} \overline{\bP}(\by, t)|}{\, \mathrm{d}\by} \right) \\
	&+ \frac{C}{t^2} \left( \int_{|\by| > t}{|\mathfrak{d}(\bff_s)(\bx,\by)|^2 \, |\p_{tt} \overline{\bP}(\by, t)|}{\, \mathrm{d}\by} \right) \,. 
\end{align*}
As a consequence, combining the above with \eqref{g-bounded-by-Utt}, and interchanging the integrals we have that 
{ \begin{align*}
\big[ \mathring{\mathfrak{g}}_1(\bff)(\bx) \big]^2  &
\leq C\int_{0}^{\infty} t^{3-2s} \left( \frac{1}{t^2}\right) \Bigg( \int_{|\by| \leq t}{|\mathfrak{d}(\bff_s)(\bx,\by)|^2 \, |\p_{tt} \overline{\bP}(\by, t)|}{\, \mathrm{d}\by} \\ 
	&\qquad \qquad \qquad \qquad \qquad \qquad \qquad  + \int_{|\by| > t}{\ldots}{\, \mathrm{d}\by} \Bigg) {\, \mathrm{d}t} \\
	&= C \int_{\bbR^d} |\mathfrak{d}(\bff_s)(\bx,\by)|^2  \Bigg( \int_{0}^{|\by|}{t^{1-2s} |\p_{tt} \overline{\bP}(\by, t)|}{\, \mathrm{d}t} \\
	&\qquad \qquad \qquad \qquad \qquad \qquad \qquad + \int_{|\by|}^{\infty}{t^{1-2s} |\p_{tt} \overline{\bP}(\by,t)|}{\, \mathrm{d}t} \Bigg) {\, \mathrm{d}\by} \\
	&\leq C \int_{\bbR^d}{ |\mathfrak{d}(\bff_s)(\bx,\by)|^2  \left( \int_{0}^{|\by|}{\frac{t^{1-2s}}{|\by|^{d+2}}}{\, \mathrm{d}t} + \int_{|\by|}^{\infty}{t^{-d-1-2s}}{\, \mathrm{d}t} \right)}{\, \mathrm{d} \by} \\
	&= C \int_{\bbR^d}{\frac{|\mathfrak{d}(\bff_s)(\bx,\by)|^2}{|\by|^{d+2s}}}{\, \mathrm{d}\by} = C \big[ D^{s} (\bff_s)(\bx) \big]^2 \,.
\end{align*}}
The proof is complete.
\end{proof}
Finally we are ready to prove the second main result of the paper, Theorem \ref{Character-Potential}. As we have indicated since for any $\bdx\in \bbR^{d}$ the pointwise estimate $D^{s}({\bff})(\bx) \leq \Upsilon^{s}(\bff)(\bdx)$ holds, the right-hand side inequality in Theorem \ref{Character-Potential} follows from the characterization in \cite[Theorem 1]{Stein-Bessel}. What remains is to prove the left-hand side inequality in Theorem \ref{Character-Potential} which is stated in the following theorem. 
\begin{theorem}
Let $s\in (0, 1)$ and $1 < p < \infty$. If  $\bff \in \mathcal{D}^{p}_{s}(\mathbb{R}^{d})$, then $\bff \in \mathcal{L}^{s,p}(\bbR^d)$. Moreover, there exist positive constants $C$  such that 
\[
 \| \bff\|_{\cL^{p}_{s}(\bbR^d)} \leq C\left(\|{\bf f}\|_{L^{p}(\bbR^d)} + \|D^{s}({\bf u}) \|_{L^{p}(\bbR^d)} \right)
\]

\end{theorem}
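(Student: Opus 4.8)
The plan has three parts: reduce to Schwartz data; ``invert the Riesz potential'' so that the $g$-function machinery developed above applies; and pass from the homogeneous (Riesz) potential estimate to the inhomogeneous Bessel norm via a Littlewood--Paley-type frequency split. For the first, since $\mathcal{D}^{p}_{s}(\bbR^d)$ is by definition the closure of $\mathcal{S}(\bbR^d;\bbR^d)$ in the norm $\Vnorm{\cdot}_{L^p(\bbR^d)}+\Vnorm{D^s(\cdot)}_{L^p(\bbR^d)}$, and since $\cL^p_s(\bbR^d)$ is a Banach space continuously contained in $L^p(\bbR^d;\bbR^d)$ (because $\cG_s\in L^1$), it is enough to prove the stated inequality for $\bff\in\mathcal{S}(\bbR^d;\bbR^d)$: a Schwartz sequence Cauchy in the $\mathcal{D}^{p}_{s}$ norm is then Cauchy in $\cL^p_s$, its $\cL^p_s$-limit must coincide with its $L^p$-limit, and the inequality passes to the limit.

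So fix Schwartz $\bff$, viewed --- as in the Poisson-type construction above --- as an $\bbR^{d+1}$-valued field with vanishing last component, and introduce $\bg$ defined componentwise by $\widehat{\bg}(\bfxi)=(2\pi|\bfxi|)^s\widehat{\bff}(\bfxi)$, so that $\cI_s\bg=\bff$. Because $\bff$ is Schwartz, $\widehat{\bg}$ is integrable and smooth with rapid decay away from the origin, so $\bg$ is bounded and continuous with $|\bg(\bx)|\le C(1+|\bx|)^{-d-s}$; in particular $\bg\in L^q(\bbR^d;\bbR^d)$ for every $q\in(1,\infty)$. Applying \autoref{theorem-PointwiseEstimateByDs} with $\bg$ playing the role of ``$\bff$'' there --- legitimate because ``$\bff_s$'' is then $\cI_s\bg=\bff$, which is Schwartz --- gives the pointwise bound $\mathring{\mathfrak{g}}_1(\bg)(\bx)\le C\,D^s(\bff)(\bx)$ for every $\bx$. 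Since $D^s(\bff)\le\Upsilon^s(\bff)\in L^p(\bbR^d)$ for Schwartz $\bff$, \autoref{character-via-g1} then gives
\[
\Vnorm{\bg}_{L^p(\bbR^d)}\le C\,\Vnorm{\mathring{\mathfrak{g}}_1(\bg)}_{L^p(\bbR^d)}\le C\,\Vnorm{D^s(\bff)}_{L^p(\bbR^d)}.
\]

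Next I would combine the two facts ``$\bff=\cI_s\bg$ with $\bg\in L^p$'' and ``$\bff\in L^p$'' into the estimate on $\Vnorm{\bff}_{\cL^p_s(\bbR^d)}=\Vnorm{\bh}_{L^p(\bbR^d)}$, where $\widehat{\bh}(\bfxi)=(1+4\pi^2|\bfxi|^2)^{s/2}\widehat{\bff}(\bfxi)$. Fix $\phi\in C^\infty_c(\bbR^d)$ with $\phi\equiv1$ on $B_1$ and $\supp\phi\subset B_2$, and split $(1+4\pi^2|\bfxi|^2)^{s/2}=\phi(\bfxi)(1+4\pi^2|\bfxi|^2)^{s/2}+(1-\phi(\bfxi))(1+4\pi^2|\bfxi|^2)^{s/2}$. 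The first multiplier is $C^\infty$ with compact support, hence an $L^p$ Fourier multiplier for every $p$, and the corresponding term, acting on $\bff$, is bounded by $C\Vnorm{\bff}_{L^p}$. For the second, write $(1-\phi(\bfxi))(1+4\pi^2|\bfxi|^2)^{s/2}=m_1(\bfxi)(2\pi|\bfxi|)^s$ with $m_1(\bfxi)=(1-\phi(\bfxi))\bigl(1+(4\pi^2|\bfxi|^2)^{-1}\bigr)^{s/2}$; since $(2\pi|\bfxi|)^s\widehat{\bff}=\widehat{\bg}$, that term equals $m_1(D)\bg$, and $m_1$ vanishes near the origin, is $C^\infty$, and satisfies $|\p^\alpha m_1(\bfxi)|\le C_\alpha|\bfxi|^{-|\alpha|}$, so by the H\"ormander--Mikhlin theorem $m_1(D)$ is bounded on $L^p$ for $1<p<\infty$ and the term is bounded by $C\Vnorm{\bg}_{L^p}$. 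Adding the two gives $\Vnorm{\bff}_{\cL^p_s(\bbR^d)}\le C\bigl(\Vnorm{\bff}_{L^p(\bbR^d)}+\Vnorm{\bg}_{L^p(\bbR^d)}\bigr)\le C\bigl(\Vnorm{\bff}_{L^p(\bbR^d)}+\Vnorm{D^s(\bff)}_{L^p(\bbR^d)}\bigr)$, which is the claim for Schwartz $\bff$; the general case follows by density.

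The genuine analytic input --- the pointwise comparison \autoref{theorem-PointwiseEstimateByDs} and the $g$-function characterization \autoref{Character-Lp-norm} --- is already in hand, so within this argument the step I expect to require the most care is the last one: one must recognize that the low-frequency obstruction (the Riesz potential of an $L^p$ field need not itself lie in $L^p$) is removed by invoking the a priori $L^p$ membership of $\bff$ and splitting the Bessel symbol into a compactly supported smooth piece, absorbed by $\bff\in L^p$, and a piece comparable to $(2\pi|\bfxi|)^s$ away from the origin, absorbed by $\bg\in L^p$ via Mikhlin. A secondary technical point --- ensuring that $\bg=\cI_s^{-1}\bff$ is genuinely an $L^p$ function, so that \autoref{theorem-PointwiseEstimateByDs} applies to it --- is disposed of cleanly by the reduction to Schwartz data.
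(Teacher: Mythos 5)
Your proposal is correct, and the core analytic steps agree with the paper's: you both reduce to Schwartz data, set $\bg$ with $\widehat{\bg}=(2\pi|\bfxi|)^{s}\widehat{\bff}$ so that $\cI_s\bg=\bff$, invoke \autoref{theorem-PointwiseEstimateByDs} to get the pointwise bound $\mathring{\mathfrak{g}}_1(\bg)\le C\,D^s(\bff)$, and then apply \autoref{character-via-g1} to conclude $\Vnorm{\bg}_{L^p}\le C\Vnorm{D^s(\bff)}_{L^p}$. Where you diverge from the paper is in the final symbol-splitting step that turns these two pieces of information into control of $\Vnorm{\bff}_{\cL^p_s}$. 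The paper invokes the classical decomposition of the Bessel symbol from \cite[Lemma 2 of Chapter V]{Stein}: there exist finite Borel measures $\nu_s,\lambda_s$ with $(1+4\pi^2|\bfxi|^2)^{s/2}=\widehat{\nu}_s(\bfxi)+(2\pi|\bfxi|)^s\widehat{\lambda}_s(\bfxi)$, so the two terms are handled directly by Young's inequality with finite measures. You instead use a smooth compactly supported cutoff $\phi$, absorb the low-frequency multiplier $\phi(\bfxi)(1+4\pi^2|\bfxi|^2)^{s/2}$ (a Schwartz convolution kernel) into $\Vnorm{\bff}_{L^p}$, and write the high-frequency part as $m_1(\bfxi)(2\pi|\bfxi|)^s$ with $m_1=(1-\phi)(1+(4\pi^2|\bfxi|^2)^{-1})^{s/2}$ a Mikhlin multiplier acting on $\bg$. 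Both routes are valid: the paper's is slightly shorter given that Stein's lemma is a black box, while yours is more self-contained in that it replaces the non-trivial existence of the measures $\nu_s,\lambda_s$ by the routine verification of the Mikhlin condition for an explicit, elementary symbol. Your observation that the a priori $L^p$ membership of $\bff$ is what removes the low-frequency obstruction to the Riesz potential is exactly the right way to understand why the inhomogeneous Bessel norm, rather than the homogeneous Riesz norm, is what one can control.
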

\begin{proof}
It suffices to establish the inequality for vector fields in the Schwarz space $ \mathcal{S}(\bbR^d;\bbR^d)$.
To that end, we will show that for any $\bff  \in \mathcal{S}(\bbR^d;\bbR^d)$, the tempered distribution  $\left( \left( 1 + 4 \pi^2 |\bfxi|^2 \right)^{s/2} \widehat{\bff} \right)^{\vee}$ belongs to the space $ L^p(\bbR^d;\mathbb{R}^{d})\, $ with the appropriate estimate. 
In what follows we use a key result that relates the Riesz  potentials $\mathcal{I}_{s}$ and the Bessel potentials $\mathcal{J}_{s}$. From \cite[Lemma 2 of Chapter V]{Stein} there exists a pair of finite measures $\nu_s$ and $\lambda_s$ on $\bbR^{d}$ with Fourier transforms $\widehat{\nu}_s(\bfxi)$ and $\widehat{\lambda}_s(\bfxi)$ respectively such that 
\[
\left( 1 + 4 \pi^2 |\bfxi|^2 \right)^{s/2}  = \widehat{\nu}_s(\bfxi) + (2 \pi |\bfxi|)^s \widehat{\lambda}_s(\bfxi)\,.
\] 
Then for any $\bff\in L^{p}(\mathbb{R}^{d};\bbR^{d})$, we have that in the sense of distributions  
\[
\left( \left( 1 + 4 \pi^2 |\bfxi|^2 \right)^{s/2} \widehat{\bff} \right)^{\vee} = \bff\ast\nu_s + ((2\pi |\xi|)^{s}\widehat{\bff})^{\vee} \ast \lambda_s
\]
We now estimate the $L^{p}$ norms of the terms in the right-hand side. We notice first that $\|\bff\ast\nu_s\|_{L^{p}} \leq C\|{\bff }\|_{L^{p}}$, which follows since $\nu_s$ is a finite measure. To estimate the second term we use again the fact that $\lambda_s$ is a finite measure to get
\[
\Vnorm{((2\pi |\xi|)^{s}\widehat{\bff})^{\vee} \ast \lambda_s}_{L^{p}} \leq C \Vnorm{((2\pi |\xi|)^{s}\widehat{\bff})^{\vee}}_{L^{p}} \leq C\Vnorm{\mathring{\mathfrak{g}}_1\Big( \, \big( (2\pi |\xi|)^{s}\widehat{\bff} \big)^{\vee} \, \Big)}_{L^{p}}\,,
\]
where in the last inequality we have applied Proposition \ref{character-via-g1}. We now use Theorem \ref{theorem-PointwiseEstimateByDs} to estimate 
\begin{align*}
\Vnorm{((2\pi |\xi|)^{s}\widehat{\bff})^{\vee} \ast \lambda_s}_{L^{p}} &\leq C\Vnorm{\mathring{\mathfrak{g}}_1\Big( \, \big( (2\pi |\xi|)^{s}\widehat{\bff} \big)^{\vee} \, \Big)}_{L^{p}}\\
&\leq C \Vnorm{D^{s} \bigg( \mathcal{I}_{s} \Big( \big( (2\pi |\xi|)^{s}\widehat{\bff} \big)^{\vee} \Big) \bigg)}_{L^{p}} \\
&= C\|D^{s}(\bff)\|_{L^{p}}\,,
\end{align*}
where we used the identity $ \mathcal{I}_{s}\Big( \big( (2\pi |\xi|)^{s}\widehat{\bff} \big)^{\vee} \Big) = {\bff }$.  
\end{proof}
\section{Conclusion} In this paper we have established a qualitative property of solutions to a strongly coupled system of nonlocal equations that arise from the linearization of the bond-based peridynamic model. We have obtained a higher integrability potential space estimate for solutions corresponding to measurable and elliptic  coefficients and possibly rough data.  Our proof of the result adapts arguments from the functional analytic approach developed in \cite{Auscher} considering regularity of scalar-valued nonlocal elliptic equations.  
The regularity estimate, which should be considered as a nonlocal analogue of the celebrated  inequality of Meyers that applies to system of PDEs with elliptic and measurable coefficients, is applicable  to those nonlocal models with kernel that is locally comparable with fractional kernels of type  $|\xi|^{-(d + 2s)}$ and with bounded and unbounded support. In particular, it is applicable for peridynamic models with fractional kernels and finite horizon.  

It is anticipated that the higher integrability  result we obtained in the current work will be used to obtain estimates for solutions of nonlocal equations with coefficients that have large jump discontinuities  as well as highly oscillatory features.   For example, there is an interest in such types of estimates  for the homogenization of peridynamic models with  highly oscillatory coefficients. The higher integrability of a measure of smoothness of a sequence of solutions guarantees compactness of the solutions in $L^{2}$ for example, and assists in establishing convergence rates of solutions to the homogenized solution. We hope to make a rigorous study of this in a future work.  

\appendix
\section{Proof of Lemma \ref{norm-of-f-p=2}}
\begin{proof} 
Using Fubini's Theorem and Plancherel's Theorem,
\[
\Vnorm{\mathring{g}_{1}(\bff)}_{L^2(\bbR^d)}^2 = \intdmt{0}{\infty}{t \intdm{\bbR^d}{|\p_t \bU(\bx,t)|^2}{\bx}}{t} = \intdmt{0}{\infty}{t \intdm{\bbR^d}{|\widehat{\p_t\bU}(\bfxi,t)|^2}{\bfxi}}{t}\,.
\]
By a direct computation, for $1 \leq k \leq d$,
\begin{align*}
|\p_t \widehat{U_k}(\bfxi,t) |^2 &= \left| \p_t \left( \e^{-2 \pi |\bfxi| t} \widehat{f}_k + (2 \pi |\bfxi| t) \e^{-2 \pi |\bfxi| t} \left( \frac{\bfxi}{|\bfxi|} \cdot \widehat{\bff}(\bfxi) \right) \left( \frac{-\xi_k}{|\bfxi|} \right) \right) \right|^2 \\
	& = 4 \pi^2 |\bfxi|^2 \e^{-4 \pi |\bfxi| t} \left| A_k(\bfxi,t) \right|^2 \,,
\end{align*}
where
$$
A_k(\bfxi,t) := \widehat{f}_k(\bfxi) + \left( \frac{\bfxi}{|\bfxi|} \cdot \widehat{\bff}(\bfxi) \right) \left( \frac{\xi_k}{|\bfxi|} \right) - (2 \pi |\bfxi| t) \left( \frac{\bfxi}{|\bfxi|} \cdot \widehat{\bff}(\bfxi) \right) \left( \frac{\xi_k}{|\bfxi|} \right)\,.
$$
(Note that here we are using the modulus $| \cdot |$ for complex numbers.) Next,
\begin{align*}
\left| \p_t \widehat{U_{d+1}}(\bfxi,t) \right|^2 &= \left| \p_t \left( (2 \pi |\bfxi| t) \e^{-2 \pi |\bfxi| t} \left( -\imath \frac{\bfxi}{|\bfxi|} \cdot \widehat{\bff}(\bfxi) \right) \right) \right|^2 \\
&= 4 \pi^2 |\bfxi|^2 \e^{-4 \pi |\bfxi| t} \left| \frac{\bfxi}{|\bfxi|} \cdot \widehat{\bff}(\bfxi) \right|^2 (1 + 2 \pi |\bfxi| t)^2\,,
\end{align*}
Expanding the $A_k$ terms,
\begin{align*}
\sum_{k=1}^d \left| A_k(\bfxi,t) \right|^2 = |\widehat{\bff}(\bfxi)|^2 &+ 3 \left( \frac{\bfxi}{|\bfxi|} \cdot \widehat{\bff}(\bfxi) \right)^2 - (8 \pi |\bfxi| t)  \left( \frac{\bfxi}{|\bfxi|} \cdot \widehat{\bff}(\bfxi) \right)^2\\
& + (4 \pi^2 |\bfxi|^2 t^2)  \left( \frac{\bfxi}{|\bfxi|} \cdot \widehat{\bff}(\bfxi) \right)^2\,.
\end{align*}
Now we prove \eqref{eq-GFxnL2-Equation2}. By the above formulas,
{\begin{align*}
&\left| \p_t \widehat{\bU}(\bfxi,t) \right|^2 \\
&= 4 \pi^2 |\bfxi|^2 \e^{-4 \pi |\bfxi| t} \left( \sum_{k=1}^d \left| A_k(\bfxi,t)\right|^2 + (1 + 2 \pi |\bfxi| t)^2 \left| \frac{\bfxi}{|\bfxi|} \cdot \widehat{\bff}(\bfxi) \right|^2 \right) \\
	&= 4 \pi^2 |\bfxi|^2 \e^{-4 \pi |\bfxi| t} \left( |\widehat{\bff}(\bfxi)|^2 + \left( 3 - 8 \pi |\bfxi| t + 4 \pi^2 |\bfxi|^2 t^2 + (1 + 2 \pi |\bfxi| t)^2 \right) \left| \frac{\bfxi}{|\bfxi|} \cdot \widehat{\bff}(\bfxi) \right|^2 \right) \\
	& = 4 \pi^2 |\bfxi|^2 \e^{-4 \pi |\bfxi| t} |\widehat{\bff}(\bfxi)|^2+ 4 \pi^2 |\bfxi|^2 \e^{-4 \pi |\bfxi| t} \Big( 4 - 4 \pi |\bfxi| t +8 \pi^2 |\bfxi|^2 t^2  \Big) \left| \frac{\bfxi}{|\bfxi|} \cdot \widehat{\bff}(\bfxi) \right|^2
\end{align*}}
Thus,
\begin{align*}
\iintdmt{0}{\infty}{\bbR^d}{t | \p_t \widehat{\bU}(\bfxi,t)|^2}{\bfxi}{t} &= \intdm{\bbR^d}{\frac{1}{4}|\widehat{\bff}(\bfxi)|^2 +\left( \frac{1}{2} - \frac{1}{2} + \frac{3}{4} \right) \left|\frac{\bfxi}{|\bfxi|} \cdot \widehat{\bff}(\bfxi) \right|^2 }{\bfxi} \\
&= \frac{1}{4} \intdm{\bbR^d}{\left| \left( \bbI_d + \frac{\bfxi \otimes \bfxi}{|\bfxi|^2} \right) \widehat{\bff}(\bfxi) \right|^2}{\bfxi}\,.
\end{align*}
where we have used the formulas: by a change of variables $\eta = 4 \pi |\bfxi| t$,
\begin{align*}
\intdmt{0}{\infty}{8 \pi^2 |\bfxi|^2 t \e^{-4 \pi |\bfxi| t}}{t} &= \frac{1}{2} \intdmt{0}{\infty}{\eta \e^{-\eta}}{\eta} = \frac{1}{2} \Gamma(1) = \frac{1}{2}\,,\\
\intdmt{0}{\infty}{16 \pi^2 |\bfxi|^2 t \e^{-4 \pi |\bfxi| t}}{t} &= \intdmt{0}{\infty}{\eta \e^{-\eta}}{\eta} = 1\,,\\
- \intdmt{0}{\infty}{32 \pi^3 |\bfxi|^3 t^2 \e^{-4 \pi |\bfxi| t}}{t} &= - \frac{1}{2} \intdmt{0}{\infty}{\eta^2 \e^{\eta}}{\eta} = - \frac{\Gamma(2)}{2} = -1\,,\\
\intdmt{0}{\infty}{64 \pi^4 |\bfxi|^4 t^3 \e^{-4 \pi |\bfxi| t}}{t} &= \frac{1}{4} \intdmt{0}{\infty}{\eta^3 \e^{-\eta}}{t} = \frac{\Gamma(3)}{4} = \frac{3}{2}\,.
\end{align*}

\section{Proof of \eqref{relation-Ut-Utt} }

This can be done using the Fourier transform. Denote
$$
 \bbA(\bfxi)\, =: \begin{bmatrix}
		- \frac{\bfxi \otimes \bfxi}{|\bfxi|^2} & -i \frac{\bfxi}{|\bfxi|} \\
		-i \frac{\bfxi}{|\bfxi|} & 1 \\
	\end{bmatrix}
	.
$$
Then
{\begin{align*}
&\frac{-1}{\Gamma(1-s)} \intdmt{0}{\infty}{ \p_{tt} \widehat{\bU_s}(\bfxi,t+r)r^{-s}}{r} \\
	&= \frac{-1}{\Gamma(1-s)} \int_{0}^{\infty} 4 \pi^2 |\bfxi|^2 \e^{-2 \pi |\bfxi| (t+r)} \Big( \bbI_{d+1} - 2 \bbA(\bfxi) + 2 \pi |\bfxi|(t+r) \bbA(\bfxi) \Big) \\
	&\qquad \qquad \qquad \qquad \qquad \qquad \qquad \qquad \qquad \times (2 \pi |\bfxi|)^{-s} \widehat{\bff}(\bfxi) r^{-s} \, \mathrm{d}r \\
	&= \frac{-1}{\Gamma(1-s)} 2 \pi |\bfxi| \e^{-2 \pi |\bfxi| t} \left( \intdmt{0}{\infty}{(2 \pi |\bfxi| r)^{-s} \e^{-2 \pi |\bfxi| r} (2 \pi |\bfxi|) }{r} \right) \\
	&\qquad \qquad \qquad \qquad \qquad \qquad \qquad \qquad \qquad \times \bigg( \bbI_{d+1} - 2 \bbA(\bfxi) + 2 \pi |\bfxi| t \bbA(\bfxi) \bigg) \\
	&\qquad + \frac{-1}{\Gamma(1-s)} 2 \pi |\bfxi| \e^{-2 \pi |\bfxi| t} \left( \intdmt{0}{\infty}{(2 \pi |\bfxi| r)^{1-s} \e^{-2 \pi |\bfxi| r} (2 \pi |\bfxi|) }{r}  \right) \bbA(\bfxi) \\
	&= \frac{-1}{\Gamma(1-s)} 2 \pi |\bfxi| \e^{-2 \pi |\bfxi| t} \, \Gamma(1-s) \, \bigg( \bbI_{d+1} - 2 \bbA(\bfxi) + 2 \pi |\bfxi| t \bbA(\bfxi) \bigg) \\
	&\qquad + \frac{-1}{\Gamma(1-s)} 2 \pi |\bfxi| \e^{-2 \pi |\bfxi| t} \, \Gamma(2-s) \, \bbA(\bfxi) \\
	&= (-2 \pi |\bfxi|) \e^{-2 \pi |\bfxi| t} \bigg( \bbI_{d+1} - 2 \bbA(\bfxi) + (2 \pi |\bfxi| t) \bbA(\bfxi) + \frac{\Gamma(2-s)}{\Gamma(1-s)} \bbA(\bfxi) \bigg) \\&= \p_t \widehat{\bU}(\bfxi,t)\,.
\end{align*}}
In the last equality we used the identity $\Gamma(x+1) = x \Gamma(x)$ for every $x > 0$.
By a change of variables we have
$$
\p_t \bU(\bx,t) = \frac{-1}{\Gamma(1-s)} \intdmt{t}{\infty}{\p_{tt} \bU_s(\bx,r)(r-t)^{-s}}{r}\,.
$$

\end{proof}

\end{document}